\documentclass[11pt,letterpaper]{amsart}
\usepackage{amsmath,amssymb,amsfonts,amsthm,mathrsfs,mathtools}
\usepackage[margin=1.15in]{geometry}
\usepackage{hyperref}
\usepackage{enumitem}
\usepackage{microtype}
\usepackage{tikz-cd}
\usetikzlibrary{arrows.meta,calc}
\usetikzlibrary{babel}
\usepackage{mathrsfs}
\usepackage{graphicx}

\newcommand{\Q}{\mathbb{Q}}
\newcommand{\Z}{\mathbb{Z}}
\newcommand{\R}{\mathbb{R}}
\newcommand{\C}{\mathbb{C}}

\newcommand{\cO}{\mathcal{O}}
\newcommand{\GL}{\mathrm{GL}}
\newcommand{\GO}{\mathrm{GO}}
\newcommand{\Tr}{\mathrm{Tr}}
\newcommand{\Sym}{\mathrm{Sym}}

\newcommand{\sh}{\mathrm{sh}}

\newcommand{\diag}{\mathrm{diag}}

\newcommand{\disc}{\mathrm{disc}}

\theoremstyle{plain}
\newtheorem{theorem}{Theorem}[section]
\newtheorem{proposition}[theorem]{Proposition}
\newtheorem{lemma}[theorem]{Lemma}
\newtheorem{corollary}[theorem]{Corollary}

\theoremstyle{definition}
\newtheorem{definition}[theorem]{Definition}
\newtheorem{remark}[theorem]{Remark}
\newtheorem{example}[theorem]{Example}

\tikzcdset{
  paperdiag/.style={
    row sep={4.2em,between origins},
    column sep={5.8em,between origins},
    cells={nodes={
      align=center,
      inner xsep=2pt,
      inner ysep=2pt,
      text height=1.7ex,
      text depth=.25ex
    }},
    every label/.append style={font=\scriptsize, inner sep=1pt}
  },
  paperdiag compact/.style={
    row sep={3.3em,between origins},
    column sep={4.6em,between origins},
    cells={nodes={
      align=center,
      inner xsep=1.5pt,
      inner ysep=1.5pt
    }},
    every label/.append style={font=\scriptsize, inner sep=1pt}
  }
}

\title{Minkowski shapes of pure number fields}
\author{Khai-Hoan Nguyen-Dang}
\date{\today}

\address{Morningside Center of Mathematics, Chinese Academy of Sciences, No.\ 55, Zhongguancun East Road, Beijing 100190, China}
\email{khaihoann@gmail.com}

\subjclass[2020]{Primary 11R21; Secondary 11R04, 11R29, 11H06}

\keywords{pure number fields, Minkowski shape, integral bases, discriminant factorization, rational diagonal leaves}

\begin{document}
\maketitle
\begin{abstract}
We study the Minkowski shape of pure number fields
\[
K_a=\Q(\theta),\qquad \theta^n=a.
\]
For admissible parameters
satisfying an explicit local hypothesis at the primes dividing \(n\), we prove a
discrete--archimedean factorization
\[
\sh(K_a)=\bigl[C(a)^{\mathsf T}\diag\bigl(s_1(a),\dots,s_{n-1}(a)\bigr)C(a)\bigr],
\]
where the \(s_m(a)\) arise from normalized monomials and
\(C(a)\in\GL_{n-1}(\Q)\) comes from a normalized integral basis. This yields a
uniform odd/even rigidity dichotomy: for every odd \(n\ge 3\), the Minkowski
shape is a complete invariant among admissible pure degree-\(n\) fields, whereas
for \(n=2r\) it determines the core field \(\Q(|a|^{1/r})\); on the squarefree
admissible subfamily it is complete up to sign, although infinitely many
non-isomorphic pairs \(K_a\) and \(K_{-a}\) have the same shape. We also derive
explicit formulas for \(|\disc(K_a)|\), including exponent-vector and
divisor-lattice factorizations. Finally, we show that the pure-field shape locus
is supported on rational diagonal leaves in shape space: unconditionally it lies
in a countable union of closed leaves, while under the same local hypothesis only
finitely many leaves occur in each fixed degree. On a fixed normalized stratum,
the shape depends only on ratio variables, whereas discriminant growth is
governed by independent product variables.
\end{abstract}

\section{Introduction}

\subsection{Shapes and the pure family}

For a number field \(K/\Q\) of degree \(n\), the discriminant records the
covolume of the Minkowski lattice and the ramified primes.  The Minkowski shape
is the finer archimedean invariant obtained from the trace-zero lattice up to integral basis change, orthogonal similitude, and positive scale; thus it
is a point of
\[
\mathscr S_{n-1}
=
\GL_{n-1}(\Z)\backslash \GL_{n-1}(\R)/\GO_{n-1}(\R).
\]
It lies naturally between the discriminant and more rigid quadratic-form
invariants such as the integral trace form.

Two complementary pictures are already visible in the literature.  For generic
\(S_n\)-families of degrees \(3,4,5\), shapes equidistribute in the full
ambient space \(\mathscr S_{n-1}\)
\cite{Terr1997,BhargavaHarron2016,Hough2019}.  In thin families, by contrast,
the support can collapse onto much smaller loci; this happens for cyclic
families, pure cubic fields, pure prime-degree fields, Galois quartic fields,
and recent pure quartic and pure sextic families
\cite{MantillaSolerMonsurro2016,BolanosMantillaSoler2021,BolanosMantillaSoler2023,Harron2017PureCubic,Holmes2025PurePrime,PiperHHarron2020,DasKalaMukhopadhyayRay2025,JakharKalwaniyaRayRoy2026}.

This paper studies the pure family
\[
K_a=\Q(\theta),\qquad \theta^n=a,
\]
where \(a\) is \emph{admissible}: it is \(n\)th-power-free, distinct from
\(\pm1\), and \(x^n-a\) is irreducible over \(\Q\).  Pure fields are explicit
enough to admit concrete integral bases and closed archimedean Gram matrices,
but rigid enough that their shapes occupy a very small part of the ambient
moduli space. 

The arithmetic input is the explicit integral-basis theorem of
Jakhar--Khanduja--Sangwan, in the normalized form used by the author \cite{NguyenDang2025MinPeriod}.  Under
Hypothesis~\textup{(H)}, namely, for every prime \(p\mid n\), either
\(v_p(a)=0\) or \(p\nmid v_p(a)\), the ring of integers \(\cO_{K_a}\) admits a
basis
\[
\Bigl\{1,\ \omega_m=\frac{\theta^m+\beta_m}{C_m(a)D_m(a)}
\ (1\le m\le n-1)\Bigr\},
\]
with \(D_m(a)\) supported on primes dividing \(n\) and with residue data
periodic in \(a\) modulo
\[
M(n)=n\,\operatorname{rad}(n)
\]
\cite{JakharKhandujaSangwan2021,NguyenDang2025MinPeriod}.  This produces a
finite-place datum \(S(a)\).  The actual shape, however, depends on finer
rational basis-change data, and the first point of the paper is to separate
these levels.

For \(C\in \GL_{n-1}(\Q)\), write
\[
\mathcal T_C
:=
\{[C^{\mathsf T}DC]: D>0\text{ diagonal}\}\subset \mathscr S_{n-1}.
\]
These rational diagonal leaves are the basic support pieces of the pure family.

\subsection{Discrete--archimedean factorization}

The main structural theorem is that, under Hypothesis~\textup{(H)}, the shape
splits into an explicit diagonal archimedean part and a discrete rational part.

\begin{theorem}[Discrete--archimedean factorization]
\label{thm:intro-factorization}
Let \(a\) be admissible and assume Hypothesis~\textup{(H)}.  Write
\[
a=\varepsilon\prod_{j=1}^{n-1} a_j^{\,j}
\]
for the strong decomposition, with \(\varepsilon\in\{\pm1\}\), the \(a_j\)
positive squarefree, and the \(a_j\) pairwise coprime.  Define
\[
C_m(a):=\prod_{j=1}^{n-1} a_j^{\lfloor jm/n\rfloor},
\qquad
e_m:=\frac{\theta^m}{C_m(a)},
\qquad
s_m(a):=\frac{|a|^{2m/n}}{C_m(a)^2}
\qquad
(1\le m\le n-1).
\]
Then:
\begin{enumerate}[label=\textup{(\roman*)},leftmargin=2.8em]
\item
The normalized monomials \(e_1,\dots,e_{n-1}\) are pairwise orthogonal for the
Minkowski inner product, satisfy \(\Tr_{K_a/\Q}(e_m)=0\), and give the diagonal
trace-zero Gram matrix
\[
\mathrm{Gram}(e_1^\perp,\dots,e_{n-1}^\perp)
=
n^3\diag\bigl(s_1(a),\dots,s_{n-1}(a)\bigr),
\qquad
e_m^\perp:=n e_m.
\]

\item
There exists an upper-triangular matrix
\[
C(a)\in \GL_{n-1}(\Q),
\]
arising from any normalized integral basis of \(\cO_{K_a}\), such that
\[
\sh(K_a)
=
\Bigl[
C(a)^{\mathsf T}\diag\bigl(s_1(a),\dots,s_{n-1}(a)\bigr)C(a)
\Bigr].
\]

\item
If
\[
U_{n-1}^+(\Z)
:=
\{U\in \GL_{n-1}(\Z): U \text{ is upper triangular with diagonal }1\},
\]
then the right coset of \(C(a)\) modulo \(U_{n-1}^+(\Z)\) is independent of the
chosen normalized integral basis.  Thus
\[
\Xi(a):=[C(a)]\in \GL_{n-1}(\Q)/U_{n-1}^+(\Z)
\]
is well-defined and
\[
\sh(K_a)
=
\Bigl[
\Xi(a)^{\mathsf T}\diag\bigl(s_1(a),\dots,s_{n-1}(a)\bigr)\Xi(a)
\Bigr].
\]
\end{enumerate}
\end{theorem}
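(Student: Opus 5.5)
Part (i) is a direct computation in the Minkowski embedding. Fix a real root \(\alpha=|a|^{1/n}\) (times a suitable \(2n\)th root of unity when \(a<0\)); the complex embeddings send \(\theta\mapsto \zeta^k\alpha\), where \(\zeta^k\) runs over the \(n\)th roots of unity (multiplied by a fixed \(\eta\) with \(\eta^n=\varepsilon\)). The Minkowski inner product is \(\langle x,y\rangle=\sum_\sigma \sigma(x)\overline{\sigma(y)}\). For \(1\le m,m'\le n-1\) we get
\[
\langle\theta^m,\theta^{m'}\rangle=|\alpha|^{m+m'}\,\eta^{m-m'}\sum_{k=0}^{n-1}\zeta^{k(m-m')}=n|a|^{2m/n}\delta_{mm'},
\]
because the character sum vanishes unless \(m\equiv m'\pmod n\). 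In the same way \(\Tr(\theta^m)=\sum_k(\eta\zeta^k\alpha)^m=0\) for \(1\le m\le n-1\). Dividing by \(C_m(a)\) gives \(\langle e_m,e_{m'}\rangle=n\,s_m(a)\,\delta_{mm'}\). Since each \(e_m\) already has trace zero, its projection orthogonal to \(1\) is itself. The paper's normalization \(e_m^\perp=ne_m\) is the trace-zero projection of \(ne_m\), i.e.\ the map \(x\mapsto nx-\Tr(x)\) used to define the integral trace-zero lattice. This gives \(\mathrm{Gram}=n^2\cdot n\,\diag(s_m)=n^3\diag(s_m)\).

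For (ii), take the normalized integral basis \(\{1,\omega_1,\dots,\omega_{n-1}\}\) from the Jakhar--Khanduja--Sangwan theorem under (H). The essential structural fact I will use is that it is triangular: \(\omega_m=(\theta^m+\beta_m)/(C_m(a)D_m(a))\), where \(\beta_m\) is a \(\Q\)-combination of \(1,\theta,\dots,\theta^{m-1}\). The trace-zero lattice is \(\{nx-\Tr(x)\}\), which has \(\Z\)-basis \(\omega_m^\perp=n\omega_m-\Tr(\omega_m)\). This projection kills the constant term of \(\beta_m\), and writing \(\theta^j=C_j(a)e_j\) gives
\[
\omega_m^\perp=\sum_{j\le m} c_{jm}\,e_j^\perp,\qquad c_{mm}=\frac{1}{D_m(a)},
\]
with rational \(c_{jm}\). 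So \(C(a)=(c_{jm})\) is upper triangular with nonzero diagonal, hence lies in \(\GL_{n-1}(\Q)\). The Gram matrix of the \(\omega_m^\perp\) is then \(n^3C(a)^{\mathsf T}\diag(s_m)C(a)\). Dropping the positive scalar \(n^3\) and passing to the class in \(\mathscr S_{n-1}\) gives (ii).

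For (iii), let \(\{1,\omega'_m\}\) be another normalized integral basis. Here I take "normalized" to mean \(\omega'_m\in\theta^m/(C_mD_m)+\Q\langle1,\dots,\theta^{m-1}\rangle\), so both bases induce bases of the same filtration \(\mathcal O\cap\Q\langle1,\dots,\theta^m\rangle\) with the same leading coefficient. I would show that the change of basis \(U\) with \(C'(a)=C(a)U\) is integral, upper triangular, and unipotent. It is integral because both trace-zero families are \(\Z\)-bases of the same lattice. It is upper triangular by comparing filtrations. Its diagonal entries are \(1\) because the leading coefficients agree. Its inverse has the same properties, so \(U\in U^+_{n-1}(\Z)\). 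Since \(\diag(s_m)\) is independent of the basis, the shape formula descends to the coset \(\Xi(a)\).

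The main obstacle is pinning down the definition of "normalized" precisely enough that the diagonal entries are forced to equal \(1\), and not merely \(\pm1\) with some \(D_m\) ambiguity. I would address this by arguing that the leading coefficient of an element of the \(m\)th filtration piece of \(\mathcal O\), modulo lower terms, generates the fractional ideal \(\tfrac{1}{C_mD_m}\Z\); the normalization then fixes the sign. A secondary check is the convention matching \(e_m^\perp=ne_m\) to the trace-zero projection used in the definition of \(\sh\).
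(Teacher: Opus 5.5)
Your proposal is correct and follows essentially the same route as the paper. Part (i) uses the root-of-unity character sum for orthogonality and trace zero. Part (ii) uses the upper-triangular transition matrix \(C(a)\) with diagonal entries \(D_m(a)^{-1}\), together with the Gram congruence. Part (iii) shows that two normalized bases differ by an integral upper-triangular unipotent matrix.

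The only differences are cosmetic. You form \(U=C(a)^{-1}C'(a)\) directly on \(\cO_{K_a}^\perp\), whereas the paper forms \(V=T^{-1}T'\in\GL_n(\Z)\) on \(\cO_{K_a}\) and takes its lower-right block. Your leading-coefficient argument is correct but unnecessary, because the paper's notion of a normalized integral basis fixes \(D_m(a)\) by definition.
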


The class \(\Xi(a)\) is a convenient normalized stratum parameter attached to
the chosen pure presentation.  It organizes the discrete rational part of the
factorization and the
supporting leaf is the object \(\mathcal T_C\).

\subsection{Arithmetic consequences}

The factorization theorem has three immediate arithmetic outputs: it identifies
the field recovered from diagonal ratios, it yields a sharp odd/even dichotomy
for shape as an isomorphism invariant, and it gives closed discriminant
formulas.

\begin{theorem}[Parity dichotomy for pure-field shapes]
\label{thm:intro-parity}
Let \(a,b\) be admissible.
\begin{enumerate}[label=\textup{(\roman*)},leftmargin=2.8em]
\item
If \(n\) is odd and
\[
\sh(K_a)=\sh(K_b),
\]
then
\[
K_a\simeq K_b.
\]

\item
If \(n=2r\) is even and
\[
\sh(K_a)=\sh(K_b),
\]
then
\[
\Q(|a|^{1/r})=\Q(|b|^{1/r}).
\]

\item
If \(n=2r\) is even, then there exist infinitely many positive squarefree
admissible integers \(a\) satisfying Hypothesis~\textup{(H)} such that
\[
\sh(K_a)=\sh(K_{-a})
\qquad\text{but}\qquad
K_a\not\simeq K_{-a}.
\]

\item
If \(n=2r\) is even and \(a,b\) are squarefree admissible, then
\[
\sh(K_a)=\sh(K_b)
\quad\Longrightarrow\quad
|a|=|b|,
\]
hence
\[
K_b\simeq K_a
\qquad\text{or}\qquad
K_b\simeq K_{-a}.
\]
\end{enumerate}
\end{theorem}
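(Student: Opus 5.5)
The plan is to extract from a shape point an invariant real subspace of \(\R\), up to a real scalar, and to read off from it the field \(\Q(|a|^{2/n})\). Parts (i), (ii) and (iv) follow from this; part (iii) needs an explicit construction.

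\textbf{The invariant.} A point of \(\mathscr S_{n-1}\) is represented by a positive Gram matrix \(G=B^{\mathsf T}B\). The orthogonal similitude factor only rescales \(G\), and \(\GL_{n-1}(\Z)\) replaces \(G\) by \(U^{\mathsf T}GU\). Hence the \(\Q\)-span \(W(G)\subset\R\) of the entries of \(G\) is well defined up to multiplication by \(\R^{\times}_{>0}\). So is the field \(F(G)\) generated by all quotients \(w/w'\) with \(w,w'\in W(G)\setminus\{0\}\).

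\textbf{Computing the invariant for \(K_a\).} Theorem~\ref{thm:intro-factorization}(ii) is stated under (H), but the theorem (i)--(iv) assumes only admissibility, so I would not rely on (H) here. The orthogonality computation in part (i) of Theorem~\ref{thm:intro-factorization} uses only \(\theta^n=a\) and roots of unity, so it holds for every admissible \(a\). Moreover, the trace-zero part of \(\cO_{K_a}\) is a full \(\Z\)-lattice in the \(\Q\)-span of \(\theta,\dots,\theta^{n-1}\). Therefore some \(C\in\GL_{n-1}(\Q)\) always satisfies \(\sh(K_a)=[C^{\mathsf T}\diag(s_m(a))C]\), with or without (H).

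The map \(D\mapsto C^{\mathsf T}DC\) is \(\Q\)-linear and injective. So the entries of \(C^{\mathsf T}DC\) span the same \(\Q\)-space as the diagonal entries of \(D\). Since each \(C_m(a)\in\Q^\times\), this gives
\[
W\bigl(\sh(K_a)\bigr)=\Q s_1+\dots+\Q s_{n-1}=\alpha^{2}\bigl(\Q+\Q\alpha^2+\dots+\Q\alpha^{2(n-2)}\bigr),\qquad \alpha:=|a|^{1/n}>0 .
\]
Taking quotients, \(F(\sh(K_a))=\Q(\alpha^2)=\Q(|a|^{2/n})\).

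\textbf{Parts (i) and (ii).} For \(n=2r\) we have \(\alpha^2=|a|^{1/r}\), which is exactly (ii). For \(n\) odd, \(\Q(\alpha^2)=\Q(\alpha)\), because \(\alpha=(\alpha^2)^{(n+1)/2}/\alpha^{n}\) and \(\alpha^n=|a|\in\Q\). Also \(K_a\simeq\Q(\alpha)\), since \(-|a|^{1/n}\) is a root of \(x^n+|a|\) when \(n\) is odd. Equal shapes therefore give \(\Q(|a|^{1/n})=\Q(|b|^{1/n})\) as subfields of \(\R\), and hence \(K_a\simeq K_b\).

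\textbf{Part (iv).} Here I would invoke the classical Kummer-type statement for real radicals. If \(A,B>1\) are integers with \(\Q(A^{1/r})=\Q(B^{1/r})\), then \(B=A^{j}c^{r}\) for some \(c\in\Q^\times\) and some \(j\) coprime to \(r\). I would prove this by comparing \(r\)th-power classes in the real radical extension, using that the only real roots of unity are \(\pm1\). When \(A\) and \(B\) are squarefree and \(r\ge2\), comparing \(p\)-adic valuations at any \(p\mid A\) forces \(j\equiv1\pmod r\), and then \(c=1\), so \(|a|=|b|\). Hence \(b=\pm a\), giving \(K_b\simeq K_a\) or \(K_b\simeq K_{-a}\). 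This step, an honest citation or proof of the radical-field lemma including the \(\pm\) sign and \(r=2\) subtleties, is where I expect the main technical care to be needed. An alternative route is to compare the diagonal entries directly: for squarefree \(a\) we have \(C_m=1\) and \(s_1=|a|^{2/n}\) up to the common scale.

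\textbf{Part (iii).} Take \(a>0\) squarefree and divisible by \(\operatorname{rad}(n)\); there are infinitely many such \(a\).
\begin{itemize}[leftmargin=2em]
\item Both \(x^n\mp a\) are Eisenstein at every prime dividing \(a\). So they are irreducible, \(\pm a\) are admissible, and (H) holds because \(v_p(\pm a)=1\) for all \(p\mid n\).
\item The Eisenstein property at every prime dividing the discriminant \(n^na^{n-1}\) gives \(\cO_{K_{\pm a}}=\Z[\theta]\). Thus \(C(\pm a)\) can be taken to be the identity.
\item The values \(s_m(a)=|a|^{2m/n}\) depend only on \(|a|\). By Theorem~\ref{thm:intro-factorization}(ii), \(\sh(K_a)=\sh(K_{-a})\).
\item Since \(n\) is even, \(K_a\) has a real embedding but \(K_{-a}\) does not. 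Hence \(K_a\not\simeq K_{-a}\).
\end{itemize}
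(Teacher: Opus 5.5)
Your proposal is correct. For (i), (ii) and (iv) it follows the paper's route. The paper derives a rational congruence $\Delta(a)=c\,M^{\mathsf T}\Delta(b)M$ and reads off $s_i(a)/s_j(a)\in F_b$ from the diagonal entries (Theorem~\ref{thm:shape-ratio-field}). It then computes $F_a=\Q(|a|^{2/n})$ (Corollary~\ref{cor:ratio-field-explicit}). For (iv) it uses exactly your radical statement $\mu=q\lambda^k$ with $\gcd(k,r)=1$ (Lemma~\ref{lem:pure-generators}, quoted from Gay--Velez), followed by the same valuation comparison (Lemma~\ref{lem:squarefree-core-uniqueness}). Your $\Q$-span-of-Gram-entries invariant is a slightly more intrinsic repackaging of Theorem~\ref{thm:shape-ratio-field}, not a new idea.

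Two small points on (iv). First, cite Gay--Velez rather than re-deriving the radical lemma by Kummer theory over $\Q(\zeta_r)$. That argument only yields $\mu=z\lambda^j$ with $z\in F\cap\Q(\zeta_r)$, and this intersection can be larger than $\Q$ (e.g.\ $A=2$ and $8\mid r$, where $\sqrt2\in F\cap\Q(\zeta_r)$). Second, normalize $1\le j\le r-1$ before concluding $c^r=1$.

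The genuine difference is in (iii). The paper takes $a=2q$, with primes $q$ from Dirichlet's theorem chosen so that $a\equiv 1+p\pmod{p^2}$ for every odd $p\mid n$. It must then invoke the Nguyen-Dang--Nguyen $\alpha$-monogeneity criterion, whose condition $v_p(a^{p-1}-1)=1$ controls the primes $p\mid n$ that do not divide $a$. By imposing $\operatorname{rad}(n)\mid a$, you make every prime dividing $\disc(x^n\mp a)=\pm n^na^{n-1}$ an Eisenstein prime. Then $\cO_{K_{\pm a}}=\Z[\theta]$ follows from classical Eisenstein $p$-maximality, with no Dirichlet theorem and no external criterion. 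Your route is shorter and self-contained. The paper's construction buys only the extra information that the sign ambiguity also occurs for radicands coprime to the odd part of $n$, which the statement does not require.
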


Thus odd degree and even degree behave differently.  In odd degree the diagonal
ratio field already recovers the field.  In even degree the shape determines
only the core field \(\Q(|a|^{1/r})\) in general, and sign is the only
remaining ambiguity on the squarefree admissible subfamily.

\begin{theorem}[Discriminant factorization]
\label{thm:intro-discriminant}
Assume Hypothesis~\textup{(H)}.  Then
\[
|\disc(K_a)|
=
n^n\left(\prod_{m=1}^{n-1}s_m(a)\right)\bigl(\det C(a)\bigr)^2.
\]
Equivalently, if
\[
a=\varepsilon\prod_{j=1}^{n-1} a_j^{\,j},
\]
then
\[
|\disc(K_a)|
=
\kappa_n\bigl(S(a)\bigr)\prod_{j=1}^{n-1} a_j^{\,n-\gcd(j,n)},
\]
where \(\kappa_n\bigl(S(a)\bigr)\) depends only on the periodic datum \(S(a)\).
If
\[
B_d(a):=
\prod_{\substack{1\le j\le n-1\\ d\mid j}} a_j
\qquad (d\mid n),
\]
then
\[
|\disc(K_a)|
=
\kappa_n\bigl(S(a)\bigr)\cdot B_1(a)^{\,n-1}
\cdot \prod_{\substack{d\mid n\\ 1<d<n}} B_d(a)^{-\varphi(d)}.
\]
\end{theorem}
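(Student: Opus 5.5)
Writing \(\omega_0=1\), the plan is to compute \(|\disc(K_a)|\) as the Gram determinant of the Minkowski inner product on the integral basis \(\{1,\omega_1,\dots,\omega_{n-1}\}\). Recall \(|\disc(K)|=\det\bigl(\Tr(\omega_i\bar\omega_j)\bigr)\) for the Hermitian Minkowski form.

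\emph{Splitting off the trace part.} Since \(\Tr(e_m)=0\) for \(m\ge1\) by Theorem~\ref{thm:intro-factorization}(i), each \(\omega_m\) is \(\Tr(\omega_m)/n\) plus a rational combination of the \(e_k\). The corresponding change of basis from \(\{1,e_1,\dots,e_{n-1}\}\) is block triangular with a \(1\) in the corner. Moreover \(1\) is orthogonal to every \(e_m\) and has \(\langle1,1\rangle=n\). Hence
\[
|\disc(K_a)|=n\cdot\det\mathrm{Gram}(e_1,\dots,e_{n-1})\cdot(\det C(a))^2 .
\]
Here \(C(a)\) is the upper-triangular matrix expressing the \(\omega_m\) modulo \(\Q\) in the \(e_k\), as in part (ii) of that theorem. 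I will make sure this is the same normalization as in (ii); any \(U^+_{n-1}(\Z)\) ambiguity does not change \(\det C(a)\). Theorem~\ref{thm:intro-factorization}(i) gives \(\mathrm{Gram}(ne_m)=n^3\diag(s_m)\), so \(\mathrm{Gram}(e_m)=n\,\diag(s_m)\), whose determinant is \(n^{n-1}\prod s_m\). This yields the first formula.

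\emph{Exponent-vector form.} Next compute \(\prod_m s_m=|a|^{(n-1)}\big/\prod_m C_m(a)^2\), using \(\sum_{m=1}^{n-1}2m/n=n-1\). The exponent of \(a_j\) is then
\[
j(n-1)-2\sum_{m=1}^{n-1}\lfloor jm/n\rfloor .
\]
The classical identity \(\sum_{m=1}^{n-1}\lfloor jm/n\rfloor=\tfrac{(j-1)(n-1)+\gcd(j,n)-1}{2}\) reduces this exponent to \(n-\gcd(j,n)\). For \(\det C(a)\): \(C(a)\) is upper triangular, and its diagonal entries are the coefficients of \(e_m\) in \(\omega_m\). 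These are \(C_m(a)/(C_m(a)D_m(a))=1/D_m(a)\). So \((\det C(a))^2=\prod_m D_m(a)^{-2}\), which is supported on primes dividing \(n\) and determined by the periodic residue data. Setting \(\kappa_n(S(a))=n^n\prod_m D_m(a)^{-2}\) gives the second formula.

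\emph{Divisor-lattice form.} This is pure bookkeeping. Write \(n-\gcd(j,n)=(n-1)-(\gcd(j,n)-1)\) and use \(\gcd(j,n)-1=\sum_{d\mid\gcd(j,n),\,d>1}\varphi(d)\). Then
\[
\prod_j a_j^{\,n-\gcd(j,n)}=B_1^{\,n-1}\prod_{d\mid n,\,d>1}B_d^{-\varphi(d)}.
\]
The term \(d=n\) has \(B_n=1\), since no \(j\le n-1\) is divisible by \(n\), so it drops out.

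\emph{Main obstacle.} I expect the only delicate points to be two. The first is confirming that \(\kappa_n\) depends only on \(S(a)\), which requires the \(D_m(a)\) to be functions of the periodic datum as in \cite{NguyenDang2025MinPeriod}. The second is checking the floor-sum identity, which I would prove by pairing \(m\) with \(n-m\) and counting the \(m\) with \(n\mid jm\).
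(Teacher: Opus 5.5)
Your proof is correct. The only substantive difference from the paper is how you obtain the first identity.

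\textbf{The paper's route.} It works with the algebraic trace form. It changes basis from the power basis $(1,\theta,\dots,\theta^{n-1})$ to the normalized integral basis, which is upper triangular with diagonal $1,(C_m(a)D_m(a))^{-1}$. It then uses the resultant computation $\disc(X^n-a)=(-1)^{(n-1)(n-2)/2}n^n a^{n-1}$ to get $|\disc(K_a)|=n^n|a|^{n-1}\prod_m C_m(a)^{-2}D_m(a)^{-2}$. The exponent-vector form is derived from this. Only afterwards is it rewritten as $n^n\prod_m s_m(a)\,(\det C(a))^2$, via $\prod_m s_m(a)=|a|^{n-1}/\prod_m C_m(a)^2$.

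\textbf{Your route.} You take the Hermitian Minkowski Gram determinant (Lemma~\ref{lem:gram-absdisc}) in the basis $(1,e_1,\dots,e_{n-1})$. By orthogonality this is $\diag(n,ns_1(a),\dots,ns_{n-1}(a))$. You then transport it by the block-triangular change of basis whose lower block is $C(a)$. Your identification of that block with the paper's $C(a)$ is right, since $b_m^\perp=n\bigl(\omega_m-\Tr(\omega_m)/n\bigr)$ and $e_m^\perp=ne_m$.

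\textbf{What each buys.} Your route needs no resultant computation. It exhibits the discriminant as the determinant of the very Gram matrix whose class is the shape, which is essentially Remark~\ref{rem:disc-covolume-consistency} run backwards. The paper's route is purely algebraic. Before absolute values are taken, it also records the sign of $\disc(K_a)$, which a Hermitian Gram determinant cannot see.

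\textbf{Floor-sum identity.} Your pairing proof is shorter than the paper's residue-permutation argument. It uses the fact that $\lfloor jm/n\rfloor+\lfloor j(n-m)/n\rfloor$ equals $j-1$ unless $n\mid jm$, which happens for exactly $\gcd(j,n)-1$ values of $m$. This is the same mechanism the paper uses later in Lemma~\ref{lem:symmetrized-products-revised}.

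\textbf{Dependence of $\kappa_n$ on $S(a)$.} This is not a delicate point. We have $D_m(a)=\prod_{p\mid n}p^{k_{p,m}}$, and the exponents $k_{p,m}$ are by definition components of $S(a)$.
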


This divisor-lattice factorization is the arithmetic shadow of the later
geometry: after the finite periodic factor has been removed, discriminant growth
is governed by squarefree divisor-products, while the archimedean position
inside a fixed leaf is controlled by independent ratio variables.

\subsection{Geometric consequences}

From a geometric point of view, the factorization forces the pure family onto a very thin subset of the ambient shape space. The next theorem records that support statement in the language of rational diagonal leaves.  Later, in paired coordinates, the variables attached to non-coprime indices are
packaged into a finite tuple of discrete parameters \(\delta(a)\).

\begin{theorem}[Geometric support of the pure family]
\label{thm:intro-geometry}
For \(C\in \GL_{n-1}(\Q)\), let
\[
\mathcal T_C
:=
\{[C^{\mathsf T}DC]: D>0\text{ diagonal}\}\subset \mathscr S_{n-1}.
\]
Then:
\begin{enumerate}[label=\textup{(\roman*)},leftmargin=2.8em]
\item
Every admissible pure-field shape lies on a rational diagonal leaf:
\[
\sh(K_a)\in \mathcal T_C
\qquad\text{for some }C\in \GL_{n-1}(\Q).
\]

\item
Each leaf \(\mathcal T_C\) is closed in \(\mathscr S_{n-1}\) and is the image
of a rational maximal flat orbifold of dimension \(n-2\).

\item
There exists a finite set
\[
\mathcal H_n\subset M_{n-1}(\Z)
\]
of full-rank right Hermite normal form matrices, depending only on \(n\), such
that
\[
\{\sh(K_a): a \text{ admissible and satisfying Hypothesis~\textup{(H)}}\}
\subset
\bigcup_{H\in \mathcal H_n}\mathcal T_H.
\]

\item
On a fixed normalized stratum \(\Xi\) and fixed discrete label \(\delta\), the
shape depends only on the ratio variables
\[
\rho_j=\frac{a_j}{a_{n-j}}
\qquad (\gcd(j,n)=1),
\]
whereas, after fixing the periodic discriminant factor, discriminant bounds
depend only on the product variables
\[
P_j=a_j a_{n-j}
\qquad (\gcd(j,n)=1).
\]
\end{enumerate}
\end{theorem}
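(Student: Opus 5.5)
The plan is to derive all four parts from Theorem~\ref{thm:intro-factorization} and Theorem~\ref{thm:intro-discriminant}, together with standard facts about diagonal tori in \(\GL_{n-1}(\R)\).

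\textbf{Part (i) without Hypothesis~(H).} Consider any \(\Z\)-basis of the trace-zero projection of \(\cO_{K_a}\). Each basis element is a \(\Q\)-linear combination of the projections \(e_m^\perp\), because \(\{1,\theta,\dots,\theta^{n-1}\}\) is a \(\Q\)-basis of \(K_a\). So there is some \(C\in\GL_{n-1}(\Q)\) expressing this basis in terms of \(e_1^\perp,\dots,e_{n-1}^\perp\). The orthogonality computation in Theorem~\ref{thm:intro-factorization}(i) does not use (H): the Minkowski pairing of \(\theta^m\) and \(\theta^{m'}\) is a sum over \(n\)th roots of unity, which vanishes unless \(m\equiv m' \pmod n\). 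Hence \(\sh(K_a)=[C^{\mathsf T}DC]\) with \(D\) diagonal and positive, which is (i).

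\textbf{Part (ii).} Let \(A\subset\GL_{n-1}(\R)\) be the diagonal torus. Then \(\mathcal T_C\) is the image of \(C^{\mathsf T}A\,C\), which is a translate of a maximal flat by the rational element \(C\). Modding out by positive scaling leaves dimension \(n-2\). For closedness I will use the classical fact that orbits \(\GL_{n-1}(\Z)\,gA\) are closed when the torus \(g^{-1}\GL_{n-1}(\Z)g\cap A\)-structure is \(\Q\)-split. Concretely, \(C^{-1}AC\) is a \(\Q\)-split torus, and by Borel--Harish-Chandra or Mahler compactness an orbit of a \(\Q\)-split torus through a rational point is closed. A direct alternative: if \([C^{\mathsf T}D_kC]\to x\), reduction theory gives a Siegel-domain representative. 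The diagonal entries \(D_k\) then either converge up to bounded \(\GL(\Z)\)-ambiguity, or they degenerate, and degeneration forces the limit out of \(\mathscr S\). I expect (ii) to be the main technical obstacle. The real work is citing or verifying the closedness of divergent-type split torus orbits (Tomanov--Weiss) and handling the orbifold stabilizers.

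\textbf{Part (iii).} By Theorem~\ref{thm:intro-factorization}, \(C(a)=\Xi(a)\) is built from the integral basis \(\omega_m=(\theta^m+\beta_m)/(C_m(a)D_m(a))\). Relative to the \(e_m\), the diagonal entries are \(1/D_m(a)\). The off-diagonal entries come from \(\beta_m/(C_m D_m)\), expressed in the \(e_{m'}\) with denominators dividing a power of \(n\). By the periodicity modulo \(M(n)\) cited in the introduction, these entries lie in a finite set. Scaling by a diagonal rational matrix can be absorbed into \(D\), since \(D\) is an arbitrary positive diagonal matrix. So \(C(a)\) may be replaced by \(\Lambda C(a)\) for a convenient diagonal \(\Lambda\), making it integral with bounded entries. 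Left multiplication by \(\GL_{n-1}(\Z)\) does not change the leaf as a subset of \(\mathscr S_{n-1}\). Combined with \(U^+(\Z)\) on the right, this lets us bring each matrix to right Hermite normal form. Only finitely many such matrices occur, and these form \(\mathcal H_n\).

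\textbf{Part (iv).} Compute \(s_m(a)\) from the strong decomposition: its \(a_j\)-exponent is \(2jm/n-2\lfloor jm/n\rfloor=2\{jm/n\}/1\). The leaf position is \(D\) modulo scalars, so only ratios \(s_m/s_1\) matter. For \(\gcd(j,n)=1\), the exponents of \(a_j\) and \(a_{n-j}\) satisfy \(\{jm/n\}+\{(n-j)m/n\}=1\) when \(n\nmid jm\). Hence the \(a_j\)-exponent minus \(1\) equals the negative of the \(a_{n-j}\)-exponent minus \(1\). After dividing by the common factor \((a_ja_{n-j})\), which is a scalar, only \(\rho_j\) survives. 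The non-coprime indices are packaged into \(\delta\). The product statement follows from Theorem~\ref{thm:intro-discriminant}, since \(a_j^{n-1}a_{n-j}^{n-1}=P_j^{n-1}\) for coprime \(j\).
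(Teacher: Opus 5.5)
Your parts (i) and (iv) match the paper's argument. For (i), you write an arbitrary $\Z$-basis of $\cO_{K_a}^\perp$ in rational coordinates against the orthogonal $e_m$. For (iv), you use the pairing $a_j^{2A}a_{n-j}^{2(1-A)}=P_j\rho_j^{2A-1}$ for coprime $j$, together with $w_j=w_{n-j}=n-1$.

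For (ii) you take a different but viable route: divergence, hence closedness, of orbits of rational conjugates of the diagonal torus (Margulis, Tomanov--Weiss). You should cite that precise divergence statement rather than Borel--Harish-Chandra. The paper proves properness of $[D]\mapsto[C^{\mathsf T}DC]$ directly. Suppose $\det D_k=1$ and $s_{i_0,k}\to 0$. The $i_0$-th column $w$ of $\operatorname{adj}(qC)$ is a nonzero integral vector with $Cw\in\Q e_{i_0}$. So $w^{\mathsf T}C^{\mathsf T}D_kCw\to 0$ at fixed determinant, which contradicts Mahler's criterion. Part (ii) is therefore not the real obstacle.

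The genuine gap is in (iii). You claim two things:
\begin{itemize}
\item the off-diagonal entries of $C(a)$ lie in a finite set, by periodicity of $S(a)$ modulo $M(n)$;
\item a diagonal rescaling makes $C(a)$ integral with bounded entries.
\end{itemize}
Both are false. By \eqref{eq:b-expand}, the $(r,m)$ entry is $\frac{b_{r,m}}{D_m(a)}\frac{C_r(a)}{C_m(a)}$. The factor $C_r(a)/C_m(a)$ depends on the primes dividing $a$, not on $a \bmod M(n)$; periodicity only controls $k_{p,m}$ and $\beta_m\bmod p^{k_{p,m}}$. In Example~\ref{prop:counterexample-C}, $10\equiv 550\pmod{36}$, yet the corresponding entries are $10/3$ and $2750/3$. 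Even the Hermite normal forms of $3C(10)$ and $3C(550)$ differ (Remark~\ref{rem:sextic-same-leaf}).

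Finiteness must come instead from two bounds that are uniform in $a$:
\begin{itemize}
\item \emph{Denominator bound.} For $p^e\parallel n$ one has $k_{p,m}\le e$. With $t=v_p(a)$, one also has $v_p(C_r(a)/C_m(a))=\lfloor tr/n\rfloor-\lfloor tm/n\rfloor\ge-(n-2)$. Hence $NC(a)\in M_{n-1}(\Z)$ for $N=\prod_{p^e\parallel n}p^{e+n-2}$ (Proposition~\ref{prop:uniform-denominator}). Saying ``denominators dividing a power of $n$'' is not enough without this uniform exponent.
\item \emph{Determinant bound.} $\det(NC(a))=N^{n-1}\prod_m D_m(a)^{-1}\in[1,N^{n-1}]$.
\end{itemize}
A right Hermite normal form has diagonal product equal to its determinant and off-diagonal entries $0\le h_{ij}<h_{ii}$. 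So only finitely many forms occur. It is the bounded determinant, not bounded entries (which grow with $a$), that produces $\mathcal H_n$.

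You should also drop the claim that left multiplication by $\GL_{n-1}(\Z)$ preserves leaves. We have $(UC)^{\mathsf T}D(UC)=C^{\mathsf T}(U^{\mathsf T}DU)C$, and $U^{\mathsf T}DU$ is not diagonal. For instance, $C=\begin{pmatrix}2&1\\0&1\end{pmatrix}$ and $U=\begin{pmatrix}1&1\\0&1\end{pmatrix}$ give $\mathcal T_{UC}=\mathcal T_{I}\neq\mathcal T_C$: the form $4x^2+4xy+17y^2$ lies on $\mathcal T_C$ but not on the diagonal leaf. Only positive diagonal matrices on the left and $\GL_{n-1}(\Z)$ on the right are allowed (Lemma~\ref{lem:leaf-equivalence-PCU}). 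You do not need the left action anyway: $NC(a)$ is upper triangular with positive diagonal, so right multiplication by $U_{n-1}^+(\Z)$ already reaches Hermite normal form.
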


In particular, the natural counting problem for pure-field shapes is not
ambient.  The continuous motion takes place inside rational diagonal leaves, and
on a fixed stratum the archimedean and discriminant variables separate into
ratio and product coordinates. Moreover, they form a model thin family for Minkowski-shape questions.  In a
generic family, the natural problem is ambient: one studies how shapes spread
through the full space \(\mathscr S_{n-1}\).  In a thin family, however, the
first problem is structural: one must identify the geometric support of the
family inside shape space, the finite-place arithmetic data that select the
relevant support pieces, and the intrinsic coordinates that govern motion on
each such piece.  The main point of this paper is that all of these layers can
be carried out uniformly for the pure family.
We prove that the shape factors into an explicit diagonal archimedean part and
a discrete rational part, that the admissible pure-field locus is supported on
rational diagonal leaves, and that on each fixed normalized stratum the shape
depends only on ratio variables whereas discriminant growth depends only on
independent product variables.  Thus the family provides a concrete model
in which one can see, in arbitrary degree, the mechanism that a general
thin-family shape theory should explain: finite-place data determine the
supporting leaf, archimedean data move the shape inside that leaf, and
discriminant bounds become transverse hyperbolic product constraints.

\subsection*{Organization of the paper}

The paper is organized as follows.
Section~\ref{sec:minkowski-shape} develops the trace-zero Minkowski shape and
its basic functorial properties.
Section~\ref{sec:gram-pure-fields} introduces admissible parameters, strong
decomposition, and the normalized monomials.
Section~\ref{sec:shapes-pure-fields} combines the monomial model with the
normalized integral-basis theorem under Hypothesis~\textup{(H)}, producing the
factorization formula and the normalized stratum class \(\Xi(a)\).
Section~\ref{sec:shape-complete} proves the arithmetic consequences in odd and
even degree.
Section~\ref{sec:disc-divisor} establishes the discriminant formulas.
Section~\ref{sec:paired-rational-leaves} studies rational diagonal leaves and
the fixed-stratum ratio/product mechanism. Finally, we record the three next problems left open by the present support theory in Subsection~\ref{sec:outlook-open-problems}.

\subsection*{Acknowledgements}
 We thank Morningside Center of Mathematics, Chinese Academy of Sciences, for its support and a stimulating research environment.

\section{Minkowski shape}\label{sec:minkowski-shape}

Fix an integer $n\ge 3$ and put $m=n-1$. All number fields are finite extensions of $\Q$.
Let $(W,\langle\cdot,\cdot\rangle)$ be a real inner product space of dimension $m$.
Its orthogonal similitude group is
\[
\GO(W):=\{g\in \GL(W): \exists \lambda(g)\in \R_{>0}\ \text{s.t.}\ \langle gx,gy\rangle=\lambda(g)\langle x,y\rangle\ \forall x,y\in W\}.
\]
After choosing an orthonormal basis we identify $\GO(W)$ with
\begin{equation}\label{eq:GOmR}
\GO_m(\R)=\{g\in \GL_m(\R): g^{\mathsf T}g=\lambda I_m \ \text{for some }\lambda\in \R_{>0}\}.
\end{equation}

Throughout, we use the \emph{row-vector convention}: a full-rank lattice $\Lambda\subset \R^m$ is written as
$\Lambda=\Z^m g$ with $g\in \GL_m(\R)$, and basis change acts on the left by $\GL_m(\Z)$.
The Euclidean structure on $\R^m$ acts on the right by $\GO_m(\R)$.

To speak about shape, one first needs the correct moduli space of lattices up to integral change of basis, Euclidean isometry, and overall scale. The following definition fixes that ambient space once and for all.

\begin{definition}[Shape space]\label{def:shape-space}
The \emph{shape space} of rank-$m$ lattices is
\[
\mathscr S_m:=\GL_m(\Z)\backslash \GL_m(\R)/\GO_m(\R).
\]
\end{definition}

Under the row convention $\Lambda=\Z^m g$, the map
\[
\GL_m(\R)\longrightarrow \Sym_m^+(\R),\qquad g\longmapsto gg^{\mathsf T}
\]
induces an isomorphism
\[
\GL_m(\R)/\GO_m(\R)\xrightarrow{\ \sim\ }\Sym_m^+(\R)/\R_{>0}.
\]
Hence
\[
\mathscr S_m \simeq \GL_m(\Z)\backslash \bigl(\Sym_m^+(\R)/\R_{>0}\bigr),
\]
where $\GL_m(\Z)$ acts by congruence $A\mapsto UAU^{\mathsf T}$.

Let $K/\Q$ be a number field of degree $n$ with ring of integers $\cO_K$.
Write $r_1$ for the number of real embeddings and $r_2$ for the number of complex conjugate pairs, so
$n=r_1+2r_2$. Let
\[
K_\R:=K\otimes_\Q \R \cong \R^{r_1}\times \C^{r_2}
\]
via the Minkowski embedding $J:K\hookrightarrow K_\R$. The Minkowski embedding carries a canonical positive-definite form coming from the trace. This is the Euclidean structure behind all later Gram-matrix computations.

\begin{definition}[Minkowski inner product]\label{def:trace-inner}
Define a positive-definite inner product on $K_\R$ by
\begin{equation}\label{eq:trace-inner-product}
\langle x,y\rangle_\infty:=\Tr_{K/\Q}(x\overline y)\qquad (x,y\in K_\R),
\end{equation}
where $\Tr_{K/\Q}$ is extended $\R$-linearly to $K_\R$.
\end{definition}

For $\alpha,\beta\in K$ one has
\[
\langle J(\alpha),J(\beta)\rangle_\infty
=\sum_{\sigma:K\hookrightarrow \C}\sigma(\alpha)\,\overline{\sigma(\beta)}
=\sum_{\sigma\text{ real}}\sigma(\alpha)\sigma(\beta)
  +2\sum_{\tau\text{ complex pairs}}\Re\bigl(\tau(\alpha)\overline{\tau(\beta)}\bigr).
\]

Define the trace-zero subspace
\[
K^0:=\{\alpha\in K:\Tr_{K/\Q}(\alpha)=0\},\qquad K_\R^0:=K^0\otimes_\Q \R\subset K_\R.
\]
Since $\langle x,J(1)\rangle_\infty=\Tr(x)$ and $\langle J(1),J(1)\rangle_\infty=\Tr(1)=n$,
we have an orthogonal decomposition
\[
K_\R=\R\cdot J(1)\oplus K_\R^0.
\]
Let $\pi:K_\R\to K_\R^0$ denote the orthogonal projection with respect to $\langle\cdot,\cdot\rangle_\infty$. Since the line spanned by \(1\) plays a distinguished role, the relevant lattice for shape is its trace-zero complement. The next definition packages that complement in an integral way.

\begin{definition}[Trace-zero lattice]\label{def:perp}
Let $K/\Q$ have degree $n$, and let $\mathcal L\subset \cO_K$ be a full-rank $\Z$-submodule containing $1$.
Define
\begin{equation}\label{eq:perp-def}
\mathcal L^\perp:=\{\alpha^\perp:=n\alpha-\Tr_{K/\Q}(\alpha)\cdot 1:\alpha\in \mathcal L\}\subset K^0.
\end{equation}
For an order $\cO\subset K$ we write $\cO^\perp$; for $\mathcal L=\cO_K$ we write $\cO_K^\perp$.
\end{definition}

With the trace-zero lattice in place, the shape of a number field is simply the shape of that lattice in the ambient Minkowski space.

\begin{definition}[Minkowski shape of a lattice in a number field]\label{def:minkowski-shape}
Let $K/\Q$ have degree $n$, and let $\mathcal L\subset \cO_K$ be a full-rank $\Z$-submodule containing $1$.
The \emph{Minkowski shape} of $\mathcal L$ is
\[
\operatorname{sh}(K,\mathcal L):=[J(\mathcal L^\perp)]\in \mathscr S_{n-1}.
\]
For an order $\cO\subset K$ we write $\operatorname{sh}(K,\cO)$, and for the maximal order we write
\[
\operatorname{sh}(K):=\operatorname{sh}(K,\cO_K).
\]
\end{definition}

The first point is that passing from \(\mathcal L\) to \(\mathcal L^\perp\) removes exactly one obvious direction, namely the copy of \(\Z\cdot 1\). The next lemma makes that precise and identifies \(\mathcal L^\perp\) as the correct rank-\((n-1)\) lattice.

\begin{lemma}[Exact sequence and rank]\label{lem:exact-sequence}
Let $K/\Q$ have degree $n$, and let $\mathcal L\subset \cO_K$ be a full-rank $\Z$-submodule containing $1$.
Then the map
\[
\mathcal L\longrightarrow \mathcal L^\perp,\qquad \alpha\longmapsto \alpha^\perp
\]
is a surjective homomorphism with kernel $\Z\cdot 1$.
Consequently, $\mathcal L^\perp$ is a free $\Z$-module of rank $n-1$ and
\[
0\longrightarrow \Z\cdot 1\longrightarrow \mathcal L\longrightarrow \mathcal L^\perp\longrightarrow 0
\]
is exact.
\end{lemma}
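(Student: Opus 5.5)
The plan is to verify three things about the map \(\phi:\mathcal L\to K^0\), \(\phi(\alpha)=\alpha^\perp=n\alpha-\Tr_{K/\Q}(\alpha)\cdot 1\): that it is additive, that its kernel is \(\Z\cdot 1\), and that its image \(\mathcal L^\perp\) is free of rank \(n-1\). Exactness then follows formally.

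\emph{Homomorphism and surjectivity.} \(\Tr_{K/\Q}\) is \(\Q\)-linear, and for \(\alpha\in\cO_K\) it takes values in \(\Z\). Hence \(\phi\) is additive. Its image lies in \(K^0\) because
\[
\Tr(\alpha^\perp)=n\Tr(\alpha)-\Tr(\alpha)\Tr(1)=n\Tr(\alpha)-n\Tr(\alpha)=0.
\]
By Definition~\ref{def:perp}, \(\mathcal L^\perp\) is exactly \(\phi(\mathcal L)\), so \(\phi\) is surjective onto \(\mathcal L^\perp\) by construction.

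\emph{Kernel.} Suppose \(\alpha\in\mathcal L\) and \(\alpha^\perp=0\). Then \(n\alpha=\Tr(\alpha)\), so \(\alpha=\Tr(\alpha)/n\in\Q\). A rational element of \(\cO_K\) is an algebraic integer in \(\Q\), hence lies in \(\Z\). It remains to see that \(\mathcal L\cap\Q=\Z\cdot 1\).

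This is the only point needing care, and the plan is the following. Since \(1\in\mathcal L\), we have \(\Z\subset\mathcal L\cap\Q\). Conversely, \(\mathcal L\cap\Q\subset\cO_K\cap\Q=\Z\). Thus the kernel is \(\mathcal L\cap\Q=\Z\cdot 1\). Conversely, \(\phi(1)=n-n=0\), so \(\Z\cdot 1\) is indeed contained in the kernel.

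\emph{Rank and freeness.} \(\mathcal L\) is a full-rank submodule of \(\cO_K\), hence free of rank \(n\). The image \(\mathcal L^\perp\) is a finitely generated subgroup of the torsion-free group \(K\), hence free. Its rank is \(n-\operatorname{rank}(\Z\cdot1)=n-1\), by additivity of rank in the short exact sequence \(0\to\Z\cdot1\to\mathcal L\to\mathcal L^\perp\to0\). That sequence is exact precisely by the kernel and surjectivity computations above.

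I expect no real obstacle. The only subtle step is \(\mathcal L\cap\Q=\Z\cdot1\), which uses both \(\mathcal L\subset\cO_K\) and \(1\in\mathcal L\).
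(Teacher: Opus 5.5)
Your proposal is correct and follows essentially the same route as the paper. Surjectivity holds by definition, the kernel is identified via $\alpha=\Tr(\alpha)/n\in\Q\cap\cO_K=\Z$ together with $(m\cdot 1)^\perp=0$, and freeness of rank $n-1$ comes from torsion-freeness plus additivity of rank. One minor point: the integrality of $\Tr$ on $\cO_K$ is not needed for additivity, since $\Tr$ is already $\Q$-linear, but invoking it does no harm.
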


\begin{proof}
Surjectivity is tautological from the definition of \(\mathcal L^\perp\).
If \(\alpha^\perp=0\), then
\[
n\alpha=\Tr_{K/\Q}(\alpha)\cdot 1\in \Q\cdot 1,
\]
hence \(\alpha\in \Q\). Since \(\alpha\in \mathcal L\subset \cO_K\), we obtain
\[
\alpha\in \Q\cap \cO_K=\Z,
\]
so \(\alpha\in \Z\cdot 1\).
Conversely, for every \(m\in\Z\),
\[
(m\cdot 1)^\perp
=nm\cdot 1-\Tr_{K/\Q}(m\cdot 1)\cdot 1
=nm\cdot 1-nm\cdot 1=0.
\]
Thus
\[
\ker(\alpha\mapsto \alpha^\perp)=\Z\cdot 1,
\qquad
\mathcal L^\perp \simeq \mathcal L/(\Z\cdot 1).
\]

Since \(\mathcal L^\perp\subset K^0\) and \(K^0\) is a \(\Q\)-vector space,
\(\mathcal L^\perp\) is torsion-free. Being finitely generated, it is therefore a free
abelian group. Its rank is
\[
\mathrm{rank}_\Z(\mathcal L^\perp)
=\mathrm{rank}_\Z(\mathcal L)-\mathrm{rank}_\Z(\Z\cdot 1)=n-1.
\]
This proves the exactness and the rank statement.
\end{proof}

The \(\perp\)-construction is just the integral incarnation of orthogonal projection away from the \(1\)-direction. Making that identity explicit will let us compare our convention with the projection-based literature.

\begin{lemma}[Orthogonal projection model]\label{lem:projection}
Let $\pi:K_\R\to K_\R^0$ be the orthogonal projection onto $K_\R^0$ with respect to
$\langle\cdot,\cdot\rangle_\infty$.
Then for every full-rank $\Z$-submodule $\mathcal L\subset \cO_K$ containing $1$,
\[
\mathcal L^\perp = n\cdot \pi(\mathcal L)\subset K^0,
\qquad\text{and}\qquad
J(\mathcal L^\perp)=n\cdot \pi\bigl(J(\mathcal L)\bigr)\subset K_\R^0.
\]
\end{lemma}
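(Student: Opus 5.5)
The plan is to compute the orthogonal projection \(\pi\) explicitly and compare it with the definition of \(\alpha^\perp\) in Definition~\ref{def:perp}. The orthogonal decomposition \(K_\R=\R\cdot J(1)\oplus K_\R^0\) has already been recorded, together with the identities \(\langle x,J(1)\rangle_\infty=\Tr(x)\) and \(\langle J(1),J(1)\rangle_\infty=n\). Using these, the projection onto the complement of the line \(\R\cdot J(1)\) is
\[
\pi(x)=x-\frac{\langle x,J(1)\rangle_\infty}{\langle J(1),J(1)\rangle_\infty}\,J(1)=x-\frac{\Tr(x)}{n}\,J(1)\qquad(x\in K_\R).
\]
To justify this formula, I will check two things: that \(\pi(x)\in K_\R^0\), which holds because \(\Tr(\pi(x))=\Tr(x)-\Tr(x)\cdot n/n=0\), and that \(x-\pi(x)\) lies in \(\R\cdot J(1)\). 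Uniqueness of the orthogonal decomposition then identifies this map as the orthogonal projection.

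Next, I will specialize to \(x=J(\alpha)\) with \(\alpha\in\mathcal L\). Since \(J\) is \(\Q\)-linear and the extended trace restricts to \(\Tr_{K/\Q}\) on \(J(K)\), this gives
\[
n\,\pi(J(\alpha))=nJ(\alpha)-\Tr_{K/\Q}(\alpha)J(1)=J\bigl(n\alpha-\Tr_{K/\Q}(\alpha)\cdot1\bigr)=J(\alpha^\perp).
\]
Taking images over all \(\alpha\in\mathcal L\) yields \(J(\mathcal L^\perp)=n\,\pi(J(\mathcal L))\).

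The statement \(\mathcal L^\perp=n\cdot\pi(\mathcal L)\) inside \(K^0\) is the same identity, read through the \(\Q\)-rational projection \(\alpha\mapsto\alpha-\tfrac{\Tr(\alpha)}{n}\) on \(K\). This map is the restriction of \(\pi\) via \(J\), because \(J(K^0)\subset K_\R^0\) and \(J\) is injective.

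There is no real obstacle here. The only point needing care is the convention that \(\pi(\mathcal L)\) means \(\pi\circ J\) pulled back to \(K\); the injectivity of \(J\) makes this unambiguous.
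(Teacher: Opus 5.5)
Your proposal is correct and follows the paper's proof. Both compute $\pi(x)=x-\frac{\Tr(x)}{n}J(1)$ from the orthogonal decomposition $K_\R=\R\cdot J(1)\oplus K_\R^0$, then multiply by $n$ to recover $\alpha^\perp$, with $K$ identified with $J(K)$. Your explicit check that this formula is the orthogonal projection only spells out a step the paper treats as standard.
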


\begin{proof}
For $\alpha\in K$, identifying $K$ with $J(K)\subset K_\R$, we have
\[
\pi(\alpha)=\alpha-\frac{\langle \alpha,1\rangle_\infty}{\langle 1,1\rangle_\infty}\cdot 1
=\alpha-\frac{\Tr(\alpha)}{n}\cdot 1.
\]
Multiplying by $n$ gives
\[
n\pi(\alpha)=n\alpha-\Tr(\alpha)=\alpha^\perp.
\]
This proves both assertions.
\end{proof}

\begin{remark}[Projection versus the \(\perp\)-model]
\label{rem:projection-versus-perp}
Let \(\pi:K_\R\to K_\R^0\) be the orthogonal projection.
By Lemma~\ref{lem:projection},
\[
\alpha^\perp=n\,\pi(\alpha)\qquad(\alpha\in K),
\]
and therefore
\[
J(\cO_K^\perp)=n\,\pi\bigl(J(\cO_K)\bigr).
\]
Thus the trace-zero lattice \(J(\cO_K^\perp)\) and the literal projected
lattice \(\pi(J(\cO_K))\) differ only by the global homothety \(n\), so they
define the same point of \(\mathscr S_{n-1}\).
In particular, this matches Holmes's convention: after describing the shape as
the projection of \(j(\cO_K)\) onto \(j(1)^\perp\), he immediately implements
it via the map \(\alpha\mapsto \alpha^\perp=n\alpha-\Tr(\alpha)\).
\end{remark}

Before specializing to pure fields, one should check that the whole construction depends only on the isomorphism class of the pair \((K,\mathcal L)\). The next proposition verifies exactly that functoriality.

\begin{proposition}[Isomorphism invariance]\label{prop:iso-invariance}
Let $f:K\to K'$ be a $\Q$-algebra isomorphism, and let $\mathcal L\subset \cO_K$ and $\mathcal L'\subset \cO_{K'}$
be full-rank $\Z$-submodules containing $1$ such that $f(\mathcal L)=\mathcal L'$.
Then
\[
\operatorname{sh}(K,\mathcal L)=\operatorname{sh}(K',\mathcal L').
\]
In particular, if $\cO\subset K$ is an order and $f(\cO)=\cO'\subset K'$ is the corresponding order, then
$\operatorname{sh}(K,\cO)=\operatorname{sh}(K',\cO')$.
\end{proposition}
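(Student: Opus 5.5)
The plan is to transport everything through the real-algebra isomorphism induced by \(f\) and check that it is an isometry for the Minkowski inner products, preserving the trace-zero construction. First, \(f\) preserves traces. Composition with \(f\) gives a bijection \(\sigma'\mapsto \sigma'\circ f\) between the embeddings of \(K'\) and those of \(K\). Hence \(\Tr_{K'/\Q}(f(\alpha))=\Tr_{K/\Q}(\alpha)\) for all \(\alpha\in K\). It follows that \(f(\alpha^\perp)=n f(\alpha)-\Tr_{K/\Q}(\alpha)\cdot 1=f(\alpha)^\perp\). Therefore \(f(\mathcal L^\perp)=(f(\mathcal L))^\perp=\mathcal L'^\perp\), and \(f\) carries \(K^0\) onto \(K'^0\). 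Here we use that \(f\) is \(\Q\)-linear and sends \(1\) to \(1\), and that \(K\) and \(K'\) have the same degree \(n\).

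Next, extend \(f\) \(\R\)-linearly to an isomorphism of \(\R\)-algebras \(f_\R:K_\R\to K'_\R\), so that \(f_\R\circ J=J'\circ f\). I will show that \(f_\R\) is an isometry for \(\langle\cdot,\cdot\rangle_\infty\). By the displayed formula after Definition~\ref{def:trace-inner}, for \(\alpha,\beta\in K\) we have
\[
\langle J'(f\alpha),J'(f\beta)\rangle_\infty=\sum_{\sigma'}\sigma'(f\alpha)\overline{\sigma'(f\beta)}=\sum_{\sigma}\sigma(\alpha)\overline{\sigma(\beta)}=\langle J(\alpha),J(\beta)\rangle_\infty,
\]
again reindexing via \(\sigma=\sigma'\circ f\). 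The Gram identity holds on the \(\Q\)-basis \(J(K)\), which spans \(K_\R\). By bilinearity it therefore extends to all of \(K_\R\), so \(f_\R\) is an isometry. Consequently \(f_\R\) restricts to an isometry \(K^0_\R\to K'^0_\R\) mapping \(J(\mathcal L^\perp)\) onto \(J'(\mathcal L'^\perp)\).

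Finally, translate this into shape space. Choose a \(\Z\)-basis \(v_1,\dots,v_{n-1}\) of \(\mathcal L^\perp\); by Lemma~\ref{lem:exact-sequence} it is free of rank \(n-1\). Its image \(f(v_i)\) is then a \(\Z\)-basis of \(\mathcal L'^\perp\). Because \(f_\R\) is an isometry, the Gram matrices \(\bigl(\langle J v_i,Jv_j\rangle_\infty\bigr)\) and \(\bigl(\langle J'f v_i,J'fv_j\rangle_\infty\bigr)\) coincide. Under the identification \(\mathscr S_{n-1}\simeq \GL_{n-1}(\Z)\backslash(\Sym^+_{n-1}(\R)/\R_{>0})\), a lattice class is determined by the Gram matrix of any basis, computed after fixing orthonormal coordinates on the ambient space. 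Changing the basis acts by \(\GL_{n-1}(\Z)\)-congruence, and changing the orthonormal frame acts by \(\GO_{n-1}(\R)\). So the two classes agree. The statement about orders is the special case \(\mathcal L=\cO\), \(\mathcal L'=f(\cO)\).

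The only genuinely delicate point is the isometry step. It requires that \(\langle\cdot,\cdot\rangle_\infty\) is defined intrinsically through \(\Tr_{K/\Q}\) and the conjugation on \(K_\R\), and that \(f_\R\) commutes with that conjugation, rather than depending on a chosen ordering of the embeddings. The embedding-sum formula bypasses this, since it makes the form manifestly independent of such choices. Everything else is formal.
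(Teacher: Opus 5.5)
Your proposal is correct and follows the paper's proof essentially step for step: trace preservation under $f$ gives $f(\mathcal L^\perp)=\mathcal L'^\perp$, and reindexing embeddings via $\sigma=\sigma'\circ f$ shows that $f_\R$ is an isometry for $\langle\cdot,\cdot\rangle_\infty$, so the two trace-zero lattices are isometric and define the same point of $\mathscr S_{n-1}$. Your last paragraph, which passes from the isometry to equality of Gram-matrix classes, only makes explicit a step the paper leaves implicit.
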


\begin{proof}
Since $\Tr_{K'/\Q}(f(x))=\Tr_{K/\Q}(x)$ for every $x\in K$, one has
\[
f(\alpha^\perp)
=f\bigl(n\alpha-\Tr_{K/\Q}(\alpha)\cdot 1\bigr)
=nf(\alpha)-\Tr_{K'/\Q}(f(\alpha))\cdot 1
=f(\alpha)^\perp.
\]
Hence $f(\mathcal L^\perp)=\mathcal L'^\perp$.

Let $J:K\hookrightarrow K_\R$ and $J':K'\hookrightarrow K'_\R$ be the Minkowski embeddings, and let
$f_\R:K_\R\to K'_\R$ be the induced $\R$-linear isomorphism. For $x,y\in K$ we compute
\[
\langle f_\R(J(x)),f_\R(J(y))\rangle_{\infty,K'}
=\sum_{\sigma':K'\hookrightarrow \C}\sigma'(f(x))\,\overline{\sigma'(f(y))}
=\sum_{\sigma:K\hookrightarrow \C}\sigma(x)\,\overline{\sigma(y)}
=\langle J(x),J(y)\rangle_{\infty,K}.
\]
By $\R$-bilinearity, the same identity holds for all $x,y\in K_\R$, so $f_\R$ is an isometry
\[
(K_\R,\langle\cdot,\cdot\rangle_{\infty,K})\xrightarrow{\ \sim\ }(K'_\R,\langle\cdot,\cdot\rangle_{\infty,K'}).
\]
Since $f_\R\bigl(J(\mathcal L^\perp)\bigr)=J'(\mathcal L'^\perp)$, the lattices $J(\mathcal L^\perp)$ and
$J'(\mathcal L'^\perp)$ are isometric, hence represent the same point of $\mathscr S_{n-1}$.
\end{proof}

\emph{Ordered bases and right matrix action.}\label{conv:right-action}
Let $V$ be a real inner product space.
If $(v_1,\dots,v_m)$ is an ordered $m$-tuple of vectors in $V$ and $T\in M_m(\R)$, we write
\[
(v_1,\dots,v_m)T=(w_1,\dots,w_m)
\]
to mean
\[
w_j=\sum_{i=1}^m v_i\,T_{ij}\qquad(1\le j\le m).
\]
Equivalently, the $j$th \emph{column} of $T$ is the coordinate column of $w_j$ in the ordered basis
$(v_i)$.
With this convention, $w_j\in \mathrm{Span}_\R\{v_1,\dots,v_j\}$ for all $j$ if and only if $T$ is upper triangular.
Moreover, if $G(v)$ (resp.\ $G(w)$) denotes the Gram matrix in the $v$-basis (resp.\ $w$-basis), then
\[
G(w)=T^{\mathsf T}G(v)\,T.
\]

We will repeatedly move between ordered bases, so it is useful to record once the corresponding transformation rule for Gram matrices. This is the basic linear-algebra identity used throughout the paper.

\begin{lemma}[Gram matrices under basis change]\label{lem:gram-congruence}
Let $V$ be a real inner product space.
Let $v_1,\dots,v_d$ be a basis with Gram matrix $G=(\langle v_i,v_j\rangle)$.
If $w_i=\sum_j v_jT_{ji}$, i.e.
\[
(w_1,\dots,w_d)=(v_1,\dots,v_d)T
\qquad\text{with }T\in\GL_d(\R),
\]
then the Gram matrix in the $w$-basis is
\[
G_w=T^{\mathsf T}GT.
\]
\end{lemma}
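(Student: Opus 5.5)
We compute each entry of \(G_w\) directly from bilinearity of the inner product. Fix indices \(i,k\in\{1,\dots,d\}\). Expanding \(w_i=\sum_j v_jT_{ji}\) and \(w_k=\sum_l v_lT_{lk}\) gives
\[
(G_w)_{ik}=\langle w_i,w_k\rangle=\Bigl\langle \sum_j v_jT_{ji},\ \sum_l v_lT_{lk}\Bigr\rangle
=\sum_{j,l}T_{ji}\,\langle v_j,v_l\rangle\,T_{lk}.
\]
Since \(\langle v_j,v_l\rangle=G_{jl}\) and \(T_{ji}=(T^{\mathsf T})_{ij}\), this becomes
\[
(G_w)_{ik}=\sum_{j,l}(T^{\mathsf T})_{ij}\,G_{jl}\,T_{lk}=(T^{\mathsf T}GT)_{ik}.
\]
As \(i,k\) are arbitrary, \(G_w=T^{\mathsf T}GT\).

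The real scalars \(T_{ji}\) pass through the inner product because it is real bilinear. No conjugation enters: in this paper \(\langle\cdot,\cdot\rangle_\infty\) is a real symmetric form on \(K_\R\), even though its definition via the embeddings \(\sigma\) involves complex conjugates.

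The hypothesis \(T\in\GL_d(\R)\) is used only to know that \((w_i)\) is again a basis. Then \(G_w\) is a Gram matrix of a basis, and it is positive definite because \(G\) is and \(T\) is invertible. This is the consistency check with the convention that the \(j\)th column of \(T\) holds the \(v\)-coordinates of \(w_j\).

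There is no real obstacle here. The only point needing care is the index bookkeeping under the column convention for \(T\), so that the transpose lands on the left factor and not the right.
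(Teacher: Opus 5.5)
Your proof is correct and follows the paper's route: both expand $\langle w_i,w_k\rangle$ by bilinearity into $\sum_{j,l}T_{ji}T_{lk}\langle v_j,v_l\rangle$ and recognize this as the $(i,k)$-entry of $T^{\mathsf T}GT$. Your side remarks are not needed. The identity holds for any $T\in M_d(\R)$, and positive-definiteness of $G_w$ says nothing about the column convention.
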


\begin{proof}
Compute
\[
\langle w_i,w_j\rangle=\sum_{k,\ell}T_{ki}T_{\ell j}\langle v_k,v_\ell\rangle,
\]
which is exactly the $(i,j)$-entry of $T^{\mathsf T}GT$.
\end{proof}

\begin{lemma}[Minkowski Gram determinant equals absolute discriminant]\label{lem:gram-absdisc}
Let $K/\Q$ have degree $n$, and let $v=(v_1,\dots,v_n)$ be a $\Q$-basis of $K$.
Let $\{\sigma_i\}_{i=1}^n$ be the set of embeddings $K\hookrightarrow\C$, counting complex embeddings separately,
and form the embedding matrix
\[
M=(\sigma_i(v_j))_{i,j}.
\]
Let
\[
G_\infty(v):=(\langle J(v_i),J(v_j)\rangle_\infty)_{i,j}
\]
be the Gram matrix in the Minkowski inner product \eqref{eq:trace-inner-product}.
Then
\[
\det G_\infty(v)=|\disc(v)|.
\]
In particular, if $v$ is an integral basis of $\cO_K$, then
\[
\det G_\infty(v)=|\disc(K)|.
\]
\end{lemma}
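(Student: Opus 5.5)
The plan is to factor the Minkowski Gram matrix as $G_\infty(v)=\overline{M}^{\mathsf T}M$, where $\overline M$ is the entrywise complex conjugate of the embedding matrix $M$, and then take determinants. The identity $\disc(v)=\det(M)^2$, which is the standard definition of the discriminant of a $\Q$-basis (equivalently $\det(\Tr(v_iv_j))$), will then give the result.

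\textbf{Step 1: the factorization.} By Definition~\ref{def:trace-inner} and the displayed formula after it, for all $i,j$ we have
\[
\langle J(v_i),J(v_j)\rangle_\infty=\sum_{k=1}^n\sigma_k(v_i)\overline{\sigma_k(v_j)}.
\]
This is the $(i,j)$ entry of $M^{\mathsf T}\overline M$. Conjugate embeddings appear in conjugate pairs, so the sum is real and symmetric, and $G_\infty(v)=M^{\mathsf T}\overline{M}$.

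\textbf{Step 2: determinants.} Since $\det\overline M=\overline{\det M}$, we get
\[
\det G_\infty(v)=\det(M)\,\overline{\det(M)}=|\det M|^2 .
\]

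\textbf{Step 3: identifying $|\det M|^2$.} We have $\disc(v)=\det(M)^2$. This holds because $\det(\Tr(v_iv_j))=\det(M^{\mathsf T}M)=\det(M)^2$. The matrix $M$ has real entries in its real-embedding rows. Complex conjugation permutes the rows of $M$ by swapping each complex pair, which is $r_2$ transpositions, so $\overline{\det M}=(-1)^{r_2}\det M$. Hence $\det M$ is real or purely imaginary. In either case $|\det M|^2=|\det(M)^2|=|\disc(v)|$.

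\textbf{Step 4: integral bases.} If $v$ is an integral basis of $\cO_K$, then $\disc(v)=\disc(K)$ by definition, which gives the final claim.

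The only delicate point is sign bookkeeping, namely why $|\det M|^2$ equals $|\det(M)^2|$. This is automatic for any complex number $z$, since $|z|^2=|z^2|$, so there is no real obstacle. The conjugation-pairing remark is needed only to confirm that $G_\infty$ is a genuine real Gram matrix, consistent with the real inner product on $K_\R$.
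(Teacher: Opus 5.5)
Your proposal is correct and follows the paper's proof essentially step for step: both factor $G_\infty(v)=M^{\mathsf T}\overline{M}$, deduce $\det G_\infty(v)=|\det M|^2=|\det(M)^2|=|\disc(v)|$, and then specialize to integral bases. Your extra remark that $\det M$ is real or purely imaginary is not needed, as you yourself note. The switch between $\overline{M}^{\mathsf T}M$ in your plan and $M^{\mathsf T}\overline{M}$ in Step 1 is also harmless, since the two agree because $G_\infty(v)$ is real.
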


\begin{proof}
By definition,
\[
(G_\infty(v))_{j,k}
=\sum_{i=1}^n \sigma_i(v_j)\,\overline{\sigma_i(v_k)}.
\]
Hence, if \(M=(\sigma_i(v_j))_{i,j}\), then
\[
G_\infty(v)=M^{\mathsf T}\overline{M}.
\]
Therefore
\[
\det G_\infty(v)
=\det\!\bigl(M^{\mathsf T}\overline{M}\bigr)
=\det(M)\,\overline{\det(M)}
=|\det(M)|^2.
\]
On the other hand, the usual discriminant of \(v\) is
\[
\disc(v)=\det(M)^2.
\]
Thus
\[
|\disc(v)|=|\det(M)|^2=\det G_\infty(v).
\]
\end{proof}

As a first concrete payoff, one can compute the covolume of the trace-zero lattice in closed form. This gives the precise scale relating the shape lattice to the discriminant of the field.

\begin{proposition}[Covolume of $J(\cO_K^\perp)$]\label{prop:covolume-tracezero}
Let $K/\Q$ have degree $n$ and let
\[
\cO_K^\perp=\{\,n\alpha-\Tr(\alpha)\cdot 1:\alpha\in \cO_K\,\}.
\]
be the trace-zero lattice of $\cO_K$.
Then the covolume of the Euclidean lattice
\[
J(\cO_K^\perp)\subset (K_\R^0,\langle\cdot,\cdot\rangle_\infty)
\]
satisfies
\[
\mathrm{covol}\bigl(J(\cO_K^\perp)\bigr)^2
=\det\mathrm{Gram}\bigl(J(b_1^\perp),\dots,J(b_{n-1}^\perp)\bigr)
= n^{2n-3}|\disc(K)|,
\]
and hence
\[
\mathrm{covol}\bigl(J(\cO_K^\perp)\bigr)=n^{n-\frac32}|\disc(K)|^{1/2}.
\]
\end{proposition}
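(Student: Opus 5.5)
We compute the Gram determinant of \(b_1^\perp,\dots,b_{n-1}^\perp\) from the Gram determinant of a full integral basis, using an orthogonal decomposition. Choose an integral basis of \(\cO_K\) of the form \((1,b_1,\dots,b_{n-1})\). This is possible because \(1\) is primitive in \(\cO_K\): indeed \(\cO_K/\Z\cdot 1\) is torsion-free, since \(\Q\cap\cO_K=\Z\). By Lemma~\ref{lem:exact-sequence}, the elements \(b_1^\perp,\dots,b_{n-1}^\perp\) then form a \(\Z\)-basis of \(\cO_K^\perp\). So the squared covolume equals \(\det\mathrm{Gram}(J(b_1^\perp),\dots,J(b_{n-1}^\perp))\), which is the first equality.

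For the second equality, write \(v=(1,b_1,\dots,b_{n-1})\). Replace it by the basis
\[
v'=(1,\pi(b_1),\dots,\pi(b_{n-1})),\qquad \pi(b_i)=b_i-\tfrac{\Tr(b_i)}{n}\cdot 1,
\]
which is an \(\R\)-basis of \(K_\R\) (in fact a \(\Q\)-basis of \(K\)). The change of basis matrix \(T\) with \(v'=vT\) is upper unitriangular, so \(\det T=1\). Lemma~\ref{lem:gram-congruence} then gives \(\det G_\infty(v')=\det G_\infty(v)\). By Lemma~\ref{lem:gram-absdisc}, \(\det G_\infty(v)=|\disc(K)|\).

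Next we use the orthogonal decomposition \(K_\R=\R\cdot J(1)\oplus K_\R^0\). Since each \(\pi(b_i)\) lies in \(K_\R^0\), the Gram matrix of \(v'\) is block diagonal, with blocks \(\langle 1,1\rangle_\infty=n\) and \(\mathrm{Gram}(\pi(b_1),\dots,\pi(b_{n-1}))\). Hence
\[
\det\mathrm{Gram}(\pi(b_i))=\frac{|\disc(K)|}{n}.
\]
By Lemma~\ref{lem:projection}, \(b_i^\perp=n\,\pi(b_i)\). Scaling \(n-1\) vectors by \(n\) multiplies the Gram determinant by \(n^{2(n-1)}\). This gives
\[
n^{2n-2}\cdot\frac{|\disc(K)|}{n}=n^{2n-3}|\disc(K)|.
\]
Taking square roots yields \(\mathrm{covol}=n^{n-3/2}|\disc(K)|^{1/2}\).

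The only step that needs care is the existence of an integral basis containing \(1\), so that the \(b_i^\perp\) actually form a basis of \(\cO_K^\perp\). Alternatively, this follows directly from the isomorphism \(\cO_K^\perp\simeq\cO_K/\Z\cdot 1\) in Lemma~\ref{lem:exact-sequence}: lift any \(\Z\)-basis of the free quotient and adjoin \(1\). Everything else is determinant bookkeeping.
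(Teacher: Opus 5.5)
Your proposal is correct and is essentially the paper's argument: the paper extends $1$ to an integral basis by the same primitivity argument and uses the single change of basis $(1,\omega_1,\dots,\omega_{n-1})\mapsto(1,b_1^\perp,\dots,b_{n-1}^\perp)$ with $\det T=n^{n-1}$, followed by splitting off the block $\langle 1,1\rangle_\infty=n$. You instead factor that same $T$ into a unitriangular projection step and a rescaling by $n$ via Lemma~\ref{lem:projection}.
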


\begin{proof}
Choose a \(\Z\)-basis \((1,\omega_1,\dots,\omega_{n-1})\) of \(\cO_K\); this is possible
because \(1\) is primitive in the free abelian group \(\cO_K\) (equivalently, if
\(1=m\alpha\) with \(m\ge 2\) and \(\alpha\in \cO_K\), then
\(\alpha=1/m\in \Q\cap \cO_K=\Z\), impossible).

Set
\[
b_i^\perp:=n\omega_i-\Tr(\omega_i)\qquad(1\le i\le n-1).
\]
By Lemma~\ref{lem:exact-sequence}, \((b_1^\perp,\dots,b_{n-1}^\perp)\) is a \(\Z\)-basis of
\(\cO_K^\perp\).

Let
\[
v=(1,\omega_1,\dots,\omega_{n-1}),\qquad
v'=(1,b_1^\perp,\dots,b_{n-1}^\perp).
\]
Let
\[
t:=\bigl(\Tr(\omega_1),\dots,\Tr(\omega_{n-1})\bigr)^{\mathsf T}.
\]
Then the change-of-basis matrix from \(v\) to \(v'\) is
\[
T=
\begin{pmatrix}
1 & -t^{\mathsf T}\\
0 & nI_{n-1}
\end{pmatrix},
\qquad
\det(T)=n^{n-1}.
\]
Hence, by Lemma~\ref{lem:gram-congruence},
\[
G_\infty(v')=T^{\mathsf T}G_\infty(v)T,
\qquad
\det G_\infty(v')=(\det T)^2\det G_\infty(v)=n^{2(n-1)}\det G_\infty(v).
\]
By Lemma~\ref{lem:gram-absdisc}, $\det G_\infty(v)=|\disc(K)|$, so
\[
\det G_\infty(v')=n^{2(n-1)}|\disc(K)|.
\]

Finally, since $\Tr(b_i^\perp)=0$, we have
\[
\langle J(1),J(b_i^\perp)\rangle_\infty=0\qquad(1\le i\le n-1),
\]
so $G_\infty(v')$ is block diagonal:
\[
G_\infty(v')=
\bigl(\langle J(1),J(1)\rangle_\infty\bigr)\oplus \mathrm{Gram}\bigl(J(b_1^\perp),\dots,J(b_{n-1}^\perp)\bigr)
=(n)\oplus G^\perp.
\]
Thus
\[
\det G_\infty(v')=n\det(G^\perp),
\]
and comparing with the previous formula yields
\[
\det(G^\perp)=n^{2n-3}|\disc(K)|.
\]
\end{proof}

\section{Gram matrices in pure fields}\label{sec:gram-pure-fields}

Fix $n\ge 3$. The purpose of this section is to isolate a basis in which the archimedean geometry of a pure field becomes completely transparent. The payoff will be a diagonal trace-zero Gram matrix whose entries can be read off directly from the strong decomposition of \(a\).

We now restrict to the pure family and isolate the parameters for which the extension really has degree \(n\). This is the class of radicands we will work with throughout.

\begin{definition}[Admissible parameter]\label{def:admissible}
An integer $a\in\Z$ is \emph{admissible (for degree $n$)} if:
\begin{enumerate}[label=(\arabic*),leftmargin=2.2em]
\item $a$ is $n$th-power-free, i.e.\ $v_\ell(a)\in\{0,1,\dots,n-1\}$ for all primes $\ell$;
\item $a\neq \pm1$;
\item $x^n-a$ is irreducible over $\Q$.
\end{enumerate}
For admissible $a$, set $K_a=\Q(\theta)$ with $\theta^n=a$.
\end{definition}

\begin{remark}
We use the irreducibility assumption only to ensure that $[K_a:\Q]=n$, so that the trace is the sum over exactly
$n$ embeddings.
\end{remark}

To keep track of how primes divide \(a\), it is convenient to separate them according to their exponent modulo \(n\). The following factorization is the bookkeeping device that will organize both the integral and archimedean sides of the story.

\begin{definition}[Strong decomposition]\label{def:strong-decomp}
Let $a\in\Z$ be nonzero and $n$th-power-free.
There exist unique $\varepsilon\in\{\pm1\}$ and unique positive integers $a_1,\dots,a_{n-1}$ such that
\begin{equation}\label{eq:strong-decomp}
a=\varepsilon\prod_{j=1}^{n-1} a_j^j,
\end{equation}
where each $a_j$ is squarefree and the $a_j$ are pairwise coprime.
\end{definition}

This decomposition is more than convenient notation: it is completely determined by the prime factorization of \(a\). The next lemma records that uniqueness explicitly.

\begin{lemma}[Uniqueness of the strong decomposition]\label{lem:strong-uniq}
The decomposition \eqref{eq:strong-decomp} is unique.
\end{lemma}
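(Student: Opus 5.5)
The plan is to reduce the uniqueness claim to the prime factorization of \(a\), which determines everything prime by prime. First, \(\varepsilon\) is forced. All \(a_j\) are positive, so the product \(\prod_j a_j^j\) is a positive integer, and comparing signs in \eqref{eq:strong-decomp} gives \(\varepsilon=\operatorname{sgn}(a)\).

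For the \(a_j\), fix a prime \(\ell\) and compute \(v_\ell\) of both sides of \eqref{eq:strong-decomp}. The \(a_j\) are pairwise coprime, so \(\ell\) divides at most one of them. Each \(a_j\) is squarefree, so if \(\ell\mid a_j\) then \(v_\ell(a_j)=1\). Hence
\[
v_\ell(a)=\sum_{j=1}^{n-1} j\,v_\ell(a_j)
\]
is either \(0\), when \(\ell\) divides no \(a_j\), or equal to the unique index \(j\) with \(\ell\mid a_j\).

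It follows that \(\ell\mid a_j\) if and only if \(v_\ell(a)=j\), for every \(1\le j\le n-1\). Since \(a_j\) is positive and squarefree, it is the product of its prime divisors, so
\[
a_j=\prod_{\ell:\ v_\ell(a)=j}\ell .
\]
This expresses each \(a_j\) purely in terms of \(a\), which proves uniqueness. The \(n\)th-power-free hypothesis \(v_\ell(a)\le n-1\) also shows that this formula gives the existence asserted in Definition~\ref{def:strong-decomp}. Conversely, the \(a_j\) so defined are squarefree and pairwise coprime, since each prime has a single valuation, and \(\varepsilon\prod_j a_j^j\) has the same valuations and the same sign as \(a\), so it equals \(a\).

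There is no real obstacle here. The one point needing care is the valuation step: it relies on both coprimality and squarefreeness, and together these rule out a prime contributing to two indices or with a multiple exponent.
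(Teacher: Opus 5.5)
Your proof is correct and follows the same route as the paper: both arguments recover each $a_j$ as the product of the primes $\ell$ with $v_\ell(a)=j$. You also spell out two steps that the paper's terse argument leaves implicit, namely the valuation computation (using squarefreeness and pairwise coprimality) and the determination of $\varepsilon$ by comparing signs.
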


\begin{proof}
For each prime $\ell$, since $a$ is $n$th-power-free we have $v_\ell(a)\in\{0,1,\dots,n-1\}$.
Assign $\ell$ to the unique index $j=v_\ell(a)$ (if $j\neq 0$), i.e.\ declare $\ell\mid a_j$ and $\ell\nmid a_k$
for $k\neq j$.
This produces squarefree, pairwise coprime $a_j$ and recovers $a$.
Uniqueness follows because $v_\ell(a)$ determines exactly which $a_j$ contains $\ell$.
\end{proof}

For $0\le m\le n-1$ define
\begin{equation}\label{eq:Cm-def}
C_m(a):=\prod_{j=1}^{n-1} a_j^{\lfloor jm/n\rfloor}.
\end{equation}

The normalization \(C_m(a)\) is chosen so that the monomials become integral without destroying their simple archimedean shape. The next proposition is the basic integrality statement.

\begin{proposition}[Integrality of normalized monomials]\label{prop:integrality-em}
Let $K_a=\Q(\theta)$ with $\theta^n=a$ and $a$ admissible.
For each $m=0,1,\dots,n-1$, the element
\[
\frac{\theta^m}{C_m(a)}
\]
is an algebraic integer.
\end{proposition}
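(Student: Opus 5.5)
The plan is to show that \(e_m:=\theta^m/C_m(a)\) is a root of a monic polynomial with integer coefficients. The natural candidate is \(x^n-c\), where
\[
c:=\frac{a^m}{C_m(a)^n}.
\]
Indeed \(e_m^n=\theta^{mn}/C_m(a)^n=a^m/C_m(a)^n\). It therefore suffices to prove that \(c\in\Z\).

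For that, insert the strong decomposition \eqref{eq:strong-decomp}:
\[
c=\varepsilon^m\prod_{j=1}^{n-1}a_j^{\,jm-n\lfloor jm/n\rfloor}
=\varepsilon^m\prod_{j=1}^{n-1}a_j^{\,r_j},
\qquad r_j:=jm-n\lfloor jm/n\rfloor\in\{0,\dots,n-1\}.
\]
Here \(r_j\) is just the least nonnegative residue of \(jm\) modulo \(n\). By the division algorithm \(r_j\ge 0\), so \(c\) is a product of nonnegative integer powers of the positive integers \(a_j\), up to sign. Hence \(c\in\Z\).

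It follows that \(e_m\) is a root of the monic integer polynomial \(x^n-c\), so \(e_m\) is an algebraic integer. The case \(m=0\) is trivial, since \(C_0(a)=1\) and \(e_0=1\).

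I do not expect any real obstacle. The only point requiring care is the sign bookkeeping for \(\varepsilon\), and \(\varepsilon^m=\pm1\) is harmless. Irreducibility of \(x^n-a\) and Hypothesis~\textup{(H)} are not needed. As a byproduct, the argument identifies \(e_m^n=\varepsilon^m\prod_j a_j^{\,r_j}\), which will be useful later for computing \(|e_m|^2\) in the Gram matrix.
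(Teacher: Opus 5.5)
Your proof is correct and is essentially the paper's argument: both show that $e_m^n=a^m/C_m(a)^n\in\Z$ and then conclude via the monic polynomial $X^n-a^m/C_m(a)^n\in\Z[X]$. The only cosmetic difference is that the paper checks $v_\ell\bigl(a^m/C_m(a)^n\bigr)=jm-n\lfloor jm/n\rfloor\ge 0$ prime by prime, whereas you read the same nonnegative exponents directly off the strong decomposition.
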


\begin{proof}
Let $\ell$ be a prime and set $j:=v_\ell(a)\in\{0,1,\dots,n-1\}$.
If $j=0$, then $\ell\nmid a_k$ for every $k$, and therefore
\[
v_\ell\bigl(C_m(a)\bigr)=0.
\]
If $1\le j\le n-1$, then by the strong decomposition one has $\ell\mid a_j$ and $\ell\nmid a_k$ for every
$k\neq j$, so
\[
v_\ell\bigl(C_m(a)\bigr)=\Bigl\lfloor\frac{jm}{n}\Bigr\rfloor.
\]
Thus in all cases,
\[
v_\ell\!\left(\frac{a^m}{C_m(a)^n}\right)
=jm-n\,v_\ell\bigl(C_m(a)\bigr)
=jm-n\Bigl\lfloor\frac{jm}{n}\Bigr\rfloor\ge 0.
\]
Hence $a^m/C_m(a)^n\in \Z$.
Since
\[
\left(\frac{\theta^m}{C_m(a)}\right)^n=\frac{a^m}{C_m(a)^n}\in \Z,
\]
the element $\theta^m/C_m(a)$ is a root of the monic polynomial
\[
X^n-\frac{a^m}{C_m(a)^n}\in \Z[X].
\]
Therefore $\theta^m/C_m(a)$ is an algebraic integer.
\end{proof}

For $1\le m\le n-1$ define the normalized monomials
\begin{equation}\label{eq:em-def}
e_m:=\frac{\theta^m}{C_m(a)}\in \cO_{K_a}.
\end{equation}

Fix a complex $n$th root $\theta_0$ of $a$.
Since $x^n-a$ is irreducible, the $n$ embeddings $\sigma_k:K_a\hookrightarrow\C$ are determined by
\[
\sigma_k(\theta)=\zeta_n^k\theta_0,\qquad k=0,1,\dots,n-1,
\]
where $\zeta_n=e^{2\pi i/n}$. The great advantage of the pure basis is that the archimedean geometry becomes almost Fourier-theoretic. In particular, distinct monomials decouple completely under the Minkowski inner product.

\begin{proposition}[Orthogonality of monomials]\label{prop:orthogonality}
Let $a$ be admissible and let $1\le i,j\le n-1$.
Then
\[
\langle \theta^i,\theta^j\rangle_\infty=0\quad\text{if }i\neq j,
\qquad
\langle \theta^m,\theta^m\rangle_\infty=n\,|a|^{2m/n}\quad(1\le m\le n-1).
\]
\end{proposition}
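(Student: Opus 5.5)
The plan is to compute the inner product directly from the embedding formula. By the expression for $\langle J(\alpha),J(\beta)\rangle_\infty$ recorded after Definition~\ref{def:trace-inner}, and since $x^n-a$ is irreducible, the $n$ embeddings are exactly $\sigma_k(\theta)=\zeta_n^k\theta_0$ for $0\le k\le n-1$. Hence
\[
\langle \theta^i,\theta^j\rangle_\infty=\sum_{k=0}^{n-1}\sigma_k(\theta)^i\,\overline{\sigma_k(\theta)^j}
=\theta_0^{\,i}\,\overline{\theta_0}^{\,j}\sum_{k=0}^{n-1}\zeta_n^{k(i-j)},
\]
where I use $\overline{\zeta_n^{kj}}=\zeta_n^{-kj}$. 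The argument then splits into the cases $i\neq j$ and $i=j$.

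\textbf{Case $i\neq j$.} Since $1\le i,j\le n-1$, we have $0<|i-j|<n$, so $\zeta_n^{i-j}\neq 1$ is a nontrivial $n$th root of unity. The geometric sum $\sum_{k=0}^{n-1}\zeta_n^{k(i-j)}=\bigl(\zeta_n^{n(i-j)}-1\bigr)/\bigl(\zeta_n^{i-j}-1\bigr)$ therefore vanishes, and the inner product is $0$.

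\textbf{Case $i=j=m$.} Each term of the sum equals $1$, so the sum is $n$, and the prefactor is $|\theta_0|^{2m}$. Since $\theta_0^n=a$, we have $|\theta_0|^n=|a|$, hence $|\theta_0|=|a|^{1/n}$ with the positive real root. This gives $\langle\theta^m,\theta^m\rangle_\infty=n|a|^{2m/n}$.

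The only point requiring care is the list of embeddings, that is, that there are exactly $n$ of them, all of the stated form. This follows from irreducibility, which gives $[K_a:\Q]=n$, together with the fact that the roots of $x^n-a$ are $\zeta_n^k\theta_0$. The Minkowski inner product, summed over all complex embeddings counted separately, is then the Hermitian sum above, and no real/complex case split is needed. I expect no serious obstacle; the statement is a direct character-orthogonality computation.
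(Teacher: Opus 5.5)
Your proof is correct and follows the paper's own argument. Both expand $\langle\theta^i,\theta^j\rangle_\infty$ over the $n$ embeddings $\sigma_k(\theta)=\zeta_n^k\theta_0$, factor out $\theta_0^{\,i}\overline{\theta_0^{\,j}}$, and use that $\sum_{k=0}^{n-1}\zeta_n^{k(i-j)}$ vanishes for $0<|i-j|<n$ and equals $n$ when $i=j$, with $|\theta_0|^{2m}=|a|^{2m/n}$.
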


\begin{proof}
By Definition~\ref{def:trace-inner},
\[
\langle \theta^i,\theta^j\rangle_\infty
=\sum_{k=0}^{n-1}\sigma_k(\theta^i)\,\overline{\sigma_k(\theta^j)}
=\theta_0^i\overline{\theta_0^j}\sum_{k=0}^{n-1}\zeta_n^{k(i-j)}.
\]
The geometric sum $\sum_{k=0}^{n-1}\zeta_n^{k(i-j)}$ vanishes unless $n\mid(i-j)$.
Since $1\le i,j\le n-1$, this happens if and only if $i=j$.
If $i=j=m$, the sum equals $n$ and
\[
\theta_0^m\overline{\theta_0^m}=|\theta_0|^{2m}=|a|^{2m/n}.
\]
\end{proof}

Define
\begin{equation}\label{eq:sm-def}
s_m(a):=\frac{|a|^{2m/n}}{C_m(a)^2}\qquad(1\le m\le n-1).
\end{equation}

Once the monomials are normalized, the previous orthogonality immediately becomes a diagonal Gram matrix. At the same time, these monomials already lie in the trace-zero hyperplane.

\begin{corollary}[Diagonal Gram matrix and trace-zero]\label{cor:diag-gram}
The Gram matrix of $(e_1,\dots,e_{n-1})$ in $(K_{a,\R},\langle\cdot,\cdot\rangle_\infty)$ is
\begin{equation}\label{eq:diag-gram}
\mathrm{Gram}(e_1,\dots,e_{n-1})=n\,\diag\bigl(s_1(a),\dots,s_{n-1}(a)\bigr).
\end{equation}
Moreover,
\[
\Tr_{K_a/\Q}(e_m)=0\qquad(1\le m\le n-1).
\]
\end{corollary}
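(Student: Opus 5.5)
The plan is to derive both assertions of Corollary~\ref{cor:diag-gram} from Proposition~\ref{prop:orthogonality}, using bilinearity of the Minkowski inner product and the explicit list of embeddings \(\sigma_k(\theta)=\zeta_n^k\theta_0\).

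For the Gram matrix, write \(e_m=\theta^m/C_m(a)\), where \(C_m(a)\) is a positive integer. Since \(\langle\cdot,\cdot\rangle_\infty\) is real bilinear and the scalars \(C_i(a)^{-1}\) are real, we have
\[
\langle e_i,e_j\rangle_\infty=\frac{\langle\theta^i,\theta^j\rangle_\infty}{C_i(a)C_j(a)}.
\]
For \(i\ne j\) this vanishes by Proposition~\ref{prop:orthogonality}. For \(i=j=m\) it equals \(n|a|^{2m/n}/C_m(a)^2=n\,s_m(a)\), by the definition \eqref{eq:sm-def}. This gives \eqref{eq:diag-gram}.

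For the trace, we compute directly
\[
\Tr_{K_a/\Q}(\theta^m)=\sum_{k=0}^{n-1}\zeta_n^{km}\theta_0^m .
\]
This is \(\theta_0^m\) times a geometric sum, and the sum vanishes because \(1\le m\le n-1\) means \(n\nmid m\). Dividing by the rational number \(C_m(a)\) gives \(\Tr(e_m)=0\).

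The trace can also be seen from orthogonality: \(\Tr(x)=\langle x,1\rangle_\infty\) and \(1=\theta^0\), and the computation in the proof of Proposition~\ref{prop:orthogonality} extends to the index \(0\).

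No step is a real obstacle. The only point needing care is that the embeddings are exactly the \(n\) maps \(\sigma_k\), which holds by irreducibility of \(x^n-a\). That is already built into Proposition~\ref{prop:orthogonality}.
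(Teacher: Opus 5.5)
Your proposal is correct and follows essentially the same route as the paper: the Gram formula comes from Proposition~\ref{prop:orthogonality} after rescaling by the real scalars $C_m(a)^{-1}$, and the trace vanishes by the geometric sum $\theta_0^m\sum_{k=0}^{n-1}\zeta_n^{km}=0$ for $1\le m\le n-1$. Your alternative observation, that $\Tr(x)=\langle x,1\rangle_\infty$ and the orthogonality computation extends to the index $0$, is also valid.
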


\begin{proof}
The Gram-matrix formula follows from Proposition~\ref{prop:orthogonality} and $e_m=\theta^m/C_m(a)$.
For the trace,
\[
\Tr(\theta^m)=\theta_0^m\sum_{k=0}^{n-1}\zeta_n^{km}=0
\qquad(1\le m\le n-1),
\]
hence $\Tr(e_m)=0$ as well.
\end{proof}

Define
\[
e_m^\perp:=n e_m-\Tr(e_m)=n e_m\qquad(1\le m\le n-1).
\]

Passing from \(e_m\) to their \(\perp\)-versions only rescales the basis, so the trace-zero Gram matrix stays diagonal as well. This is the diagonal model that will later be conjugated by the integral-basis matrix.

\begin{lemma}[Diagonal Gram matrix for $(e_m^\perp)$]\label{lem:G0perp}
Let $G_0^\perp(a)$ be the Gram matrix of $(e_1^\perp,\dots,e_{n-1}^\perp)$.
Then
\begin{equation}\label{eq:G0perp}
G_0^\perp(a)=n^3\,\diag\bigl(s_1(a),\dots,s_{n-1}(a)\bigr).
\end{equation}
\end{lemma}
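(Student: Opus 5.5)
The plan is to reduce directly to Corollary~\ref{cor:diag-gram} by bilinearity of the Minkowski inner product. First, we need \(e_m^\perp = n e_m\) exactly. Corollary~\ref{cor:diag-gram} gives \(\Tr_{K_a/\Q}(e_m)=0\) for \(1\le m\le n-1\), so the correction term \(\Tr(e_m)\cdot 1\) in the definition of \(\alpha^\perp\) from Definition~\ref{def:perp} vanishes. Each \(e_m^\perp\) is therefore just the scalar multiple \(n e_m\).

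Next, we compute the Gram matrix. Put \(T = nI_{n-1}\), so that \((e_1^\perp,\dots,e_{n-1}^\perp) = (e_1,\dots,e_{n-1})T\) in the right-action convention. Lemma~\ref{lem:gram-congruence} then gives
\[
G_0^\perp(a) = T^{\mathsf T}\,\mathrm{Gram}(e_1,\dots,e_{n-1})\,T = n^2\,\mathrm{Gram}(e_1,\dots,e_{n-1}).
\]
Equivalently, one can note directly that \(\langle n e_i, n e_j\rangle_\infty = n^2\langle e_i,e_j\rangle_\infty\). Substituting \eqref{eq:diag-gram} yields \(n^2\cdot n\,\diag(s_1(a),\dots,s_{n-1}(a)) = n^3\,\diag(s_1(a),\dots,s_{n-1}(a))\), which is \eqref{eq:G0perp}.

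There is no real obstacle here. The only point requiring care is bookkeeping: the factor \(n\) from Proposition~\ref{prop:orthogonality} survives the normalization by \(C_m(a)\) into \eqref{eq:diag-gram}, and the \(\perp\)-rescaling contributes a further \(n^2\). As a consistency check, the diagonal entries should come out to \(n^3|a|^{2m/n}/C_m(a)^2\), matching \(\langle n\theta^m/C_m(a),\, n\theta^m/C_m(a)\rangle_\infty\).
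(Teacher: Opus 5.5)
Your proposal is correct and follows the paper's proof: you use $\Tr(e_m)=0$ to get $e_m^\perp=ne_m$, then observe that rescaling the basis by $n$ multiplies the Gram matrix $n\,\diag\bigl(s_1(a),\dots,s_{n-1}(a)\bigr)$ from Corollary~\ref{cor:diag-gram} by $n^2$. Your explicit use of Lemma~\ref{lem:gram-congruence} with $T=nI_{n-1}$ only spells out the scaling step the paper states in one line.
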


\begin{proof}
By Corollary~\ref{cor:diag-gram}, the Gram matrix of $(e_1,\dots,e_{n-1})$ is
$n\diag(s_1(a),\dots,s_{n-1}(a))$.
Scaling the basis by $n$ scales the Gram matrix by $n^2$, hence
\[
G_0^\perp(a)=n^2\cdot n\diag(s_1(a),\dots,s_{n-1}(a))
=n^3\diag(s_1(a),\dots,s_{n-1}(a)).
\]
\end{proof}

In odd degree, the diagonal entries retain enough algebraic independence to recover the underlying field. This is the key archimedean input behind the later completeness theorem.

\begin{lemma}[$\Q$-linear independence of $s_1(a),\dots,s_{n-1}(a)$ for odd $n$]
\label{lem:odd-s-independence}
Assume that $n$ is odd.
Then the numbers
\[
s_1(a),\dots,s_{n-1}(a)
\]
are $\Q$-linearly independent.
\end{lemma}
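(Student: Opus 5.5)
Write \(\alpha:=|a|^{1/n}\in\R_{>0}\). The idea is to show that each \(s_m(a)\) is a nonzero rational multiple of a power \(\alpha^{r_m}\), that the exponents \(r_m\) run through distinct values in \(\{1,\dots,n-1\}\), and that \(1,\alpha,\dots,\alpha^{n-1}\) are \(\Q\)-linearly independent.

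\emph{Step 1 (irreducibility of \(x^n-|a|\)).} First I check that \(\alpha\) has degree \(n\) over \(\Q\). If \(a>0\), then \(x^n-|a|=x^n-a\) is irreducible by admissibility. If \(a<0\), I use that \(n\) is odd:
\[
(-x)^n-a=-x^n+|a|=-(x^n-|a|).
\]
Substituting \(x\mapsto -x\) preserves irreducibility, so \(x^n-|a|\) is again irreducible. Hence \([\Q(\alpha):\Q]=n\), and \(1,\alpha,\dots,\alpha^{n-1}\) is a \(\Q\)-basis of \(\Q(\alpha)\).

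\emph{Step 2 (reduce each \(s_m\) to a single power of \(\alpha\)).} By \eqref{eq:sm-def}, \(s_m(a)=\alpha^{2m}/C_m(a)^2\). Write \(2m=q_m n+r_m\) with \(0\le r_m\le n-1\). Then
\[
s_m(a)=\frac{|a|^{q_m}}{C_m(a)^2}\,\alpha^{r_m},
\]
and the coefficient \(|a|^{q_m}/C_m(a)^2\) is a nonzero rational number.

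\emph{Step 3 (the exponents are distinct and nonzero).} Since \(n\) is odd, \(2\) is invertible modulo \(n\). Therefore \(m\mapsto 2m \bmod n\) is a bijection of \(\Z/n\Z\) fixing \(0\). It follows that, as \(m\) runs over \(1,\dots,n-1\), the residues \(r_m\) are pairwise distinct and nonzero; in fact they run exactly over \(\{1,\dots,n-1\}\). This is the only step where oddness enters essentially. For even \(n\) it fails, since \(m\) and \(m+n/2\) give the same residue.

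\emph{Conclusion.} The numbers \(s_1(a),\dots,s_{n-1}(a)\) are nonzero rational rescalings of the distinct basis elements \(\alpha^{r_1},\dots,\alpha^{r_{n-1}}\) from Step 1. A family of nonzero rational multiples of distinct basis vectors is \(\Q\)-linearly independent, which proves the lemma.

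The only point needing care is Step 1 in the case \(a<0\): \(\alpha\) is the real root of \(x^n-|a|\), not a root of \(x^n-a\). The sign substitution, which is valid precisely because \(n\) is odd, handles this.
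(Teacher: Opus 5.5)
Your proof is correct and follows essentially the same route as the paper: you write each $s_m(a)$ as a nonzero rational multiple of a power of an $n$th root whose exponent $r_m\equiv 2m \pmod n$ is nonzero, note that these exponents are pairwise distinct because $n$ is odd, and conclude using the power basis. The only cosmetic difference is that the paper works directly with the real root $\beta_a$ of $x^n-a$, using $\beta_a^{2m}=|a|^{2m/n}$ since $2m$ is even, which makes your separate irreducibility check for $x^n-|a|$ unnecessary.
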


\begin{proof}
Let $\beta_a\in \R$ be the unique real root of $x^n-a$.
Since $n$ is odd and $x^n-a$ is irreducible, the set
\[
\{1,\beta_a,\beta_a^2,\dots,\beta_a^{n-1}\}
\]
is a $\Q$-basis of $K_a=\Q(\beta_a)$, and for $1\le m\le n-1$ we have
\[
s_m(a)=\frac{\beta_a^{2m}}{C_m(a)^2}.
\]
Suppose that
\[
\sum_{m=1}^{n-1} c_m s_m(a)=0
\qquad(c_m\in \Q).
\]
Let
\[
D:=\prod_{m=1}^{n-1} C_m(a)^2\in \Z_{>0}.
\]
Multiplying by $D$ yields
\[
\sum_{m=1}^{n-1} d_m\beta_a^{2m}=0,
\qquad d_m:=c_m\frac{D}{C_m(a)^2}\in \Q.
\]
For each $m\in\{1,\dots,n-1\}$, write
\[
2m=\varepsilon_m n+r_m,
\qquad \varepsilon_m\in\{0,1\},\quad 1\le r_m\le n-1.
\]
Because $n$ is odd, one has $2m\not\equiv 0\pmod n$, so $r_m\neq 0$.
Because $\gcd(2,n)=1$, the residues $r_m$ are pairwise distinct as $m$ varies.
Using $\beta_a^n=a$, we obtain
\[
\beta_a^{2m}=a^{\varepsilon_m}\beta_a^{r_m}.
\]
Therefore
\[
0=\sum_{m=1}^{n-1} d_m a^{\varepsilon_m}\beta_a^{r_m},
\]
which is a $\Q$-linear relation among the distinct basis elements
$\beta_a,\beta_a^2,\dots,\beta_a^{n-1}$.
Hence every coefficient $d_m a^{\varepsilon_m}$ is zero; since $a\neq 0$, it follows that each $d_m=0$.
Therefore each $c_m=0$, proving the claimed $\Q$-linear independence.
\end{proof}

\section{Shapes of pure fields}\label{sec:shapes-pure-fields}

\subsection{Integral-basis data and minimal periodicity}
We first recall the explicit integral-basis theorem of Jakhar--Khanduja--Sangwan in the normalized form
used by the author. See \cite[Theorem~2.1 and Remark~2.4]{NguyenDang2025MinPeriod}; this is a reformulation
of \cite[Theorem~1.6]{JakharKhandujaSangwan2021} under Hypothesis~\textup{(H)}. We isolate that hypothesis once here so that it can be invoked cleanly later.

\begin{definition}[Hypothesis (H)]\label{def:H}
Let $a$ be $n$th-power-free.
We say that $(n,a)$ satisfies \emph{Hypothesis~\textup{(H)}} if for every prime $p\mid n$, either
$v_p(a)=0$, or $p\nmid v_p(a)$.
\end{definition}

Hypothesis~\textup{(H)} holds, for example, when $a$ is squarefree or $\gcd(a,n)=1$. Under that assumption, the arithmetic complexity of \(\cO_{K_a}\) can be concentrated into controlled denominator factors at primes dividing \(n\) and lower-order corrections in the power basis. The next theorem is the precise form of that structure that we will use throughout.

\begin{theorem}[Normalized integral basis for pure fields]\label{thm:ND-basis}
Assume that $a$ is admissible and satisfies Hypothesis~\textup{(H)}.
Then there exist elements $\beta_m\in \Z[\theta]$, each a $\Z$-linear combination of
$1,\theta,\dots,\theta^{m-1}$, and nonnegative integers $k_{p,m}$ (for primes $p\mid n$ and
$1\le m\le n-1$), such that
\begin{equation}\label{eq:int-basis}
\Bigl\{1,\ \omega_m:=\frac{\theta^m+\beta_m}{C_m(a)\,D_m(a)}\ (1\le m\le n-1)\Bigr\}
\quad\text{is an integral basis of }\cO_{K_a},
\end{equation}
where
\[
D_m(a):=\prod_{p\mid n}p^{k_{p,m}}.
\]
Moreover, for each prime $p\mid n$, the residue class
\[
\beta_m \bmod p^{k_{p,m}}
\]
is determined by the explicit construction of \cite[Theorem~2.1]{NguyenDang2025MinPeriod}, and every prime
divisor of every $D_m(a)$ divides $n$ \cite[Remark~2.4]{NguyenDang2025MinPeriod}.
\end{theorem}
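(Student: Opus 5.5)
This theorem is, in the paper's own framing, a reformulation of \cite[Theorem~1.6]{JakharKhandujaSangwan2021} as normalized in \cite[Theorem~2.1 and Remark~2.4]{NguyenDang2025MinPeriod}. So the proof is a translation argument, not a new computation. My plan is to start from the explicit Jakhar--Khanduja--Sangwan basis, which under Hypothesis~\textup{(H)} is written prime by prime. Away from \(n\), the local contribution at a prime \(\ell\) with \(v_\ell(a)=j\) is the monomial \(\theta^m/\ell^{\lfloor jm/n\rfloor}\). At primes \(p\mid n\), one gets extra denominators \(p^{k_{p,m}}\) together with lower-order numerator corrections. Multiplying the local denominators gives exactly \(C_m(a)D_m(a)\), by the definition \eqref{eq:Cm-def} and the strong decomposition.

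The steps, in order, are as follows.
\begin{enumerate}[label=\textup{(\arabic*)},leftmargin=2.2em]
\item Integrality. The element \(\theta^m/C_m(a)\) is integral by Proposition~\ref{prop:integrality-em}. The cited construction then supplies \(\beta_m\in\Z[\theta]\) of degree \(<m\) such that \((\theta^m+\beta_m)/(C_m(a)D_m(a))\) is \(p\)-integral for each \(p\mid n\). For \(\ell\nmid n\), one multiplies by a unit at \(\ell\) to ensure \(\ell\)-integrality; this is the usual CRT gluing of the local corrections \(\beta_m\) into a single global element, with residues determined modulo \(p^{k_{p,m}}\).
\item Triangularity. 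The transition matrix from \((1,\theta,\dots,\theta^{n-1})\) to \((1,\omega_1,\dots,\omega_{n-1})\) is upper triangular with diagonal \(1/(C_m(a)D_m(a))\).
\item Discriminant comparison. The index of \(\Z[\theta]\) in the module generated by these elements is therefore \(\prod_m C_m(a)D_m(a)\). The proof finishes by checking that this equals \([\cO_{K_a}:\Z[\theta]]\), which follows because the cited theorem computes \(\disc(K_a)\), equivalently the index, under Hypothesis~\textup{(H)}.
\end{enumerate}

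The final clause, that every prime divisor of \(D_m(a)\) divides \(n\), holds by construction, since the \(k_{p,m}\) are defined only for \(p\mid n\).

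The main obstacle is the index equality at primes \(p\mid n\), i.e.\ showing the \(p\)-part of the index is exactly \(\prod_m p^{k_{p,m}}\). This is where Hypothesis~\textup{(H)} enters: the condition \(p\nmid v_p(a)\) or \(v_p(a)=0\) makes the Newton polygon or Montes-type analysis of \(x^n-a\) at \(p\) tractable. I will simply quote the cited result for this step rather than reprove it. At primes \(\ell\nmid n\), the polynomial \(x^n-a\) is \(\ell\)-Eisenstein-like after the substitution \(\theta\mapsto\theta^{u}/\ell^{v}\) with \(uj\equiv1\bmod n\) when \(\gcd(j,n)=1\), and tamely ramified in general. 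That yields the local index \(\sum_m\lfloor jm/n\rfloor\) by a standard computation, which I would cite or sketch in one line.
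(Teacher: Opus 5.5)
Your proposal takes essentially the paper's route: the paper gives no independent argument for Theorem~\ref{thm:ND-basis} and imports it from \cite[Theorem~2.1 and Remark~2.4]{NguyenDang2025MinPeriod} as a reformulation of \cite[Theorem~1.6]{JakharKhandujaSangwan2021}, which is exactly what you quote for the decisive $p$-part of the index, so your integrality/triangularity/index skeleton is a correct but optional unpacking of that citation. One slip in step~(1): $D_m(a)$ being an $\ell$-unit for $\ell\nmid n$ does not by itself give $\ell$-integrality, because the CRT lift must also make $\beta_m/C_m(a)$ integral at every $\ell\mid C_m(a)$. This includes primes $p\mid n$ dividing $a$; for those, Hypothesis~\textup{(H)} makes $x^n-a$ $p$-regular, so $k_{p,m}=0$ and there is no conflict with the prescribed classes modulo $p^{k_{p,m}}$.
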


\begin{definition}[Normalized integral bases attached to \(S(a)\)]
\label{def:normalized-integral-basis}
Assume that \(a\) is admissible and satisfies Hypothesis~\textup{(H)}.  We fix
the denominator exponents \(k_{p,m}\) and the residue classes
\[
[\beta_m]_{p^{k_{p,m}}}
\]
provided by the explicit construction in Theorem~\ref{thm:ND-basis}.  An
integral basis
\[
\Bigl\{
1,\ 
\omega_m=\frac{\theta^m+\widetilde\beta_m}{C_m(a)D_m(a)}
\ (1\le m\le n-1)
\Bigr\}
\]
is called a \emph{normalized integral basis attached to \(S(a)\)} if
\[
D_m(a)=\prod_{p\mid n}p^{k_{p,m}}
\]
with these fixed exponents, and each \(\widetilde\beta_m\) is a lift of the
prescribed residue class \([\beta_m]_{p^{k_{p,m}}}\) for every \(p\mid n\).
Thus all normalized integral bases attached to \(S(a)\) have the same
denominators \(D_m(a)\).
\end{definition}

\begin{remark}[Size of the \(p\)-denominator exponents]
\label{rem:kpm-bound}
If \(p^e\parallel n\), then
\[
0\le k_{p,m}\le e\qquad(1\le m\le n-1).
\]
Indeed, in the explicit local construction preceding
\cite[Theorem~1.6]{JakharKhandujaSangwan2021},
the exponent \(k_{p,m}\) is defined as the largest nonnegative integer
not exceeding a quantity bounded above by \(e=v_p(n)\).
\end{remark}

The normalized integral basis naturally breaks its finite-place information into local packets, one for each prime dividing \(n\). The next definition names those packets and the global datum they assemble into.

\begin{definition}
\label{def:ND-shape}
For a prime \(p\mid n\), define the \emph{local shape datum}
\[
S_p(a):=\Bigl(\bigl(k_{p,m}\bigr)_{1\le m\le n-1},\
\bigl([\beta_m]_{p^{k_{p,m}}}\bigr)_{1\le m\le n-1}\Bigr),
\]
where, for \(k_{p,m}>0\), \([\beta_m]_{p^{k_{p,m}}}\) denotes the class of \(\beta_m\) in the
free \((\Z/p^{k_{p,m}}\Z)\)-module with basis \(1,\theta,\dots,\theta^{m-1}\), and for
\(k_{p,m}=0\) this class is understood to be trivial.
Define the \emph{global shape datum}
\[
S(a):=(S_p(a))_{p\mid n}.
\]
\end{definition}

A striking feature of this finite-place data is that it does not depend on the full size of \(a\), but only on a short congruence class. We prove that this periodicity is in fact sharp.

\begin{theorem}[Minimal periodicity modulus]\label{thm:min-periodicity}
Assume that $a$ is admissible and satisfies Hypothesis~\textup{(H)}.
Let $p^e\parallel n$.
Then the local datum $S_p(a)$ is determined by $a\bmod p^{e+1}$, and this precision is optimal.
Consequently, the global datum $S(a)$ is determined by
\[
a\bmod M(n),
\qquad
M(n):=\prod_{p^e\parallel n}p^{e+1}=n\,\operatorname{rad}(n),
\]
and $M(n)$ is minimal among global periods.
\end{theorem}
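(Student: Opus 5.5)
The plan is to open up the explicit local construction behind Theorem~\ref{thm:ND-basis} (i.e.\ \cite[Theorem~2.1]{NguyenDang2025MinPeriod}, reformulating \cite[Theorem~1.6]{JakharKhandujaSangwan2021}). The goal is to check prime by prime that every input to \(S_p(a)\) is a function of \(a \bmod p^{e+1}\), where \(p^e\parallel n\). I would split according to whether \(p\mid a\).

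\emph{Case \(p\mid a\).} Hypothesis~\textup{(H)} gives \(p\nmid v_p(a)\). After removing \(C_m(a)\), the relevant local polynomial is then of generalized Eisenstein type at \(p\), so no further \(p\)-denominators occur. Hence \(k_{p,m}=0\) for all \(m\), and \(S_p(a)\) is trivial. Since the condition \(p\mid a\) is read off from \(a\bmod p\), this case is determined by \(a\bmod p^{e+1}\). Note that the exact value of \(v_p(a)\), which can exceed \(e+1\), plays no role.

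\emph{Case \(p\nmid a\).} Here the exponents \(k_{p,m}\) are defined in JKS as integer parts of quantities built from \(e\), \(m\) and the truncated valuation
\[
r_p(a):=\min\bigl(v_p(a^{p-1}-1),\,e+1\bigr).
\]
By Remark~\ref{rem:kpm-bound}, these quantities are capped by \(e\). The truncated valuation \(r_p(a)\) depends only on \(a\bmod p^{e+1}\). The residue classes \([\beta_m]_{p^{k_{p,m}}}\) are polynomial expressions in \(\theta\) whose coefficients are binomial-type combinations of powers of \(a\) reduced modulo \(p^{k_{p,m}}\), with \(k_{p,m}\le e\). I would verify directly from the formulas that these depend on \(a\) only modulo \(p^{e+1}\); the extra factor \(p\) comes from the Teichmüller-type normalization \(a\mapsto a^{p^{e}}\) appearing in the construction. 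Combining the local statements by CRT shows that \(S(a)\) depends only on \(a\bmod \prod_{p^e\parallel n} p^{e+1}=n\operatorname{rad}(n)\).

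\emph{Optimality.} For each \(p\mid n\), I would produce admissible \(a,a'\) satisfying \textup{(H)} with \(a\equiv a'\pmod{p^e}\) but \(S_p(a)\neq S_p(a')\). The natural candidates are:
\begin{itemize}[leftmargin=2em]
\item \(a\equiv 1+p^e \pmod{p^{e+1}}\), so \(r_p(a)=e\);
\item \(a'\equiv 1\pmod{p^{e+1}}\), so \(r_p(a')=e+1\).
\end{itemize}
For \(p=2\) I would adjust these, e.g.\ using \(a\equiv -1\) versus \(1\) modulo \(4\) when \(e=1\). I would then check, from the JKS formula, that some \(k_{p,m}\) (typically \(m=n-1\) or \(m=n/p\)) differs between the two. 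To guarantee admissibility, I would take \(a,a'\) squarefree with an auxiliary prime factor \(q\nmid n\) to the first power. This makes \(x^n-a\) Eisenstein at \(q\), hence irreducible, keeps \(a\) \(n\)th-power-free, and gives \textup{(H)} automatically; the required congruences are arranged by CRT and Dirichlet's theorem.

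\emph{Global minimality.} Suppose \(P\) is a global period with \(p^{e+1}\nmid P\). Take the local witnesses above and use CRT to choose \(a\) with \(a+P\) falling in the other local class at \(p\). This yields \(S(a)\neq S(a+P)\), while both parameters remain admissible by the same auxiliary-prime trick. Hence every period is divisible by \(M(n)\).

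I expect the main obstacle to be the second case together with the optimality witness. This requires extracting the precise dependence of \(k_{p,m}\) and \(\beta_m\) from the JKS construction, and confirming that the jump from \(r_p=e\) to \(r_p=e+1\) genuinely changes some \(k_{p,m}\) for every \(p\) and every \(e\), including \(p=2\).
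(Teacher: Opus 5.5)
The paper does not prove this statement itself. Its proof is the one-line citation of \cite[Theorem~3.1]{NguyenDang2025MinPeriod}, so you are attempting a self-contained rederivation from the Jakhar--Khanduja--Sangwan construction. The architecture is sensible: split by $p\mid a$, use the truncated valuation $r_p(a)$, take witnesses $a\equiv 1$ versus $a\equiv 1+p^e\pmod{p^{e+1}}$, and finish with CRT.

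Your case $p\mid a$ is correct, but ``generalized Eisenstein'' is not literally accurate when $g:=\gcd(v_p(a),n)>1$. The clean reason is as follows. Hypothesis~(H) gives $p\nmid g$, so $\psi:=\theta^{n/g}/p^{v_p(a)/g}$ satisfies $\psi^g=$ a $p$-adic unit, which is an \'etale equation. Meanwhile $\theta^{n/g}=p^{v_p(a)/g}\psi$ with $\gcd(v_p(a)/g,n/g)=1$ is totally ramified over $\Z_p[\psi]$. Hence the $e_m$ already span the $p$-maximal order, and $[\cO_{K_a}:L_0(a)]=\prod_m D_m(a)$ forces every $k_{p,m}=0$.

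The genuine gap is the case $p\nmid a$, which carries the substantive content of the theorem. You never state the formulas for $k_{p,m}$ or for $[\beta_m]_{p^{k_{p,m}}}$, so ``I would verify directly from the formulas'' is a promissory note, not an argument. The mechanism you offer for the extra factor $p$ is also backwards. Already $a\equiv a'\pmod p$ implies $a^{p^e}\equiv a'^{p^e}\pmod{p^{e+1}}$, so a normalization $a\mapsto a^{p^e}$ \emph{lowers} the precision needed rather than raising it. If, as you say, the $\beta$-coefficients are polynomials in $a$ read modulo $p^{k_{p,m}}$ with $k_{p,m}\le e$, they see only $a\bmod p^e$. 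The extra $p$ must therefore enter through the exponents, that is, through whether $a^{p-1}\equiv 1\pmod{p^{e+1}}$. Both the exact dependence of $k_{p,m}$ on $r_p(a)$ and the precision of the $\beta$-classes must be extracted from the explicit construction or cited, as the paper does.

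Optimality, by contrast, can be done without the formulas. For $p\nmid a$ all $C_m(a)$ are $p$-units, so
$\sum_m k_{p,m}=v_p[\cO_{K_a}:\Z[\theta]]=\tfrac12\bigl(ne-v_p(\disc K_a)\bigr)$.
It therefore suffices to check that the local discriminant of $\Q_p[x]/(x^n-a)$ differs between your two classes. You may reduce to $n=p^e$, since $x^{n/p^e}-y$ is \'etale over $y^{p^e}=a$.

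Finally, in the global step do not insist on $a'=a+P$: when $v_p(P)<e$ this need not land in your prescribed class. Instead take $a\equiv a_0$ and $a'\equiv a_0'\pmod{p^{e+1}}$, together with $a\equiv a'$ modulo the prime-to-$p$ part of $P$. This is compatible because $a_0\equiv a_0'\pmod{p^{v_p(P)}}$, and Dirichlet lets you take $a,a'$ to be large primes, which keeps them admissible and satisfying (H).
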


\begin{proof}
This is exactly \cite[Theorem~3.1]{NguyenDang2025MinPeriod}.
\end{proof}

\begin{remark}[Presentation dependence]
\label{rem:presentation}
The invariants \(S(a)\), \(C(a)\), \(\Xi(a)\), and the diagonal parameters
\[
s_1(a),\dots,s_{n-1}(a)
\]
depend on the chosen pure presentation
\[
K=\Q(\theta),\qquad \theta^n=a,
\]
and on the fixed power basis \(\{1,\theta,\dots,\theta^{n-1}\}\).

A field may admit several pure presentations, and this already happens in
prime degree; for example
\[
\Q(\sqrt[3]{2})=\Q(\sqrt[3]{4}).
\]
Thus these auxiliary data need not agree across different presentations of
the same field.

By contrast, the Minkowski shape \(\operatorname{sh}(K)\) is a field
invariant by Proposition~\ref{prop:iso-invariance}.
In particular, \(\Xi(a)\) should be viewed as a presentation-dependent
discrete datum attached to the normalized monomial model, not as an intrinsic
invariant of the abstract field.
\end{remark}

\subsection{Integral shape and Minkowski shape}

Whenever an integer or rational number is written as an element of a field \(K\), we
mean its image under the natural embedding \(\Q\hookrightarrow K\); thus \(t\) denotes
\(t\cdot 1_K\). In particular,
\[
n\alpha-\Tr_{K/\Q}(\alpha)
\]
is shorthand for
\[
n\alpha-\Tr_{K/\Q}(\alpha)\cdot 1_K.
\]

Assume henceforth that \(a\) is admissible and satisfies Hypothesis~\textup{(H)}.
Fix once and for all a normalized integral basis
\[
\mathcal B_a=
\Bigl\{1,\ \omega_m=\frac{\theta^m+\beta_m}{C_m(a)\,D_m(a)}\ (1\le m\le n-1)\Bigr\}
\]
as in Theorem~\ref{thm:ND-basis}. For \(1\le m\le n-1\), define
\[
b_m^\perp:=n\omega_m-\Tr(\omega_m)\in \cO_{K_a}^\perp.
\]
By Lemma~\ref{lem:exact-sequence}, the images of \(\omega_1,\dots,\omega_{n-1}\) form a
\(\Z\)-basis of \(\cO_{K_a}/\Z\), hence \((b_1^\perp,\dots,b_{n-1}^\perp)\) is a \(\Z\)-basis
of \(\cO_{K_a}^\perp\). The normalized monomial lattice is not usually the full ring of integers, but its defect is concentrated at the primes dividing \(n\). Away from those primes, it is already maximal.

\begin{lemma}[The normalized monomial lattice is maximal away from primes dividing $n$]
\label{lem:monomial-max-away-n}
Assume that $a$ is admissible and satisfies Hypothesis~\textup{(H)}.
Let
\[
L_0(a):=\Z\langle 1,e_1,\dots,e_{n-1}\rangle\subset \cO_{K_a}.
\]
Then
\[
[\cO_{K_a}:L_0(a)]=\prod_{m=1}^{n-1}D_m(a),
\]
hence the index $[\cO_{K_a}:L_0(a)]$ is supported only on primes dividing $n$.
In particular, for every prime $q\nmid n$,
\[
L_0(a)\otimes \Z_q=\cO_{K_a}\otimes \Z_q,
\qquad\text{and}\qquad
L_0(a)^\perp\otimes \Z_q=\cO_{K_a}^\perp\otimes \Z_q.
\]
\end{lemma}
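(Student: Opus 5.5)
The plan is to compare the two $\Z$-bases directly through a triangular change-of-basis matrix. Both $(1,e_1,\dots,e_{n-1})$ and the normalized integral basis $\mathcal B_a=(1,\omega_1,\dots,\omega_{n-1})$ of Theorem~\ref{thm:ND-basis} are expressed in the power basis $(1,\theta,\dots,\theta^{n-1})$. Since $e_m=\theta^m/C_m(a)$ and
\[
\omega_m=\frac{\theta^m+\beta_m}{C_m(a)D_m(a)},
\]
with $\beta_m\in\Z\langle 1,\theta,\dots,\theta^{m-1}\rangle$, we can write
\[
(1,e_1,\dots,e_{n-1})=(1,\omega_1,\dots,\omega_{n-1})\,T
\]
for a rational matrix $T$. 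Because $\theta^m=C_m(a)D_m(a)\,\omega_m-\beta_m$, the matrix $T$ is upper triangular, and its diagonal is $(1,D_1(a),\dots,D_{n-1}(a))$: indeed $e_m=D_m(a)\,\omega_m-\beta_m/C_m(a)$. Here the correction term $\beta_m/C_m(a)$ is a $\Q$-combination of lower powers, and hence of lower $\omega_k$ together with $1$.

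Since $L_0(a)\subset\cO_{K_a}$ by Proposition~\ref{prop:integrality-em}, the matrix $T$ has integer entries: each $e_m$ is an integral combination of the $\Z$-basis $\mathcal B_a$. The index is then
\[
[\cO_{K_a}:L_0(a)]=|\det T|=\prod_{m=1}^{n-1} D_m(a).
\]
This already gives the formula. An alternative, which avoids having to verify that the off-diagonal entries are integral, is to compute $\disc(L_0(a))/\disc(\cO_{K_a})$ through the power basis, using
\[
\det(\text{power}\to e)=\prod_m C_m(a)^{-1},
\qquad
\det(\text{power}\to\omega)=\prod_m (C_mD_m)^{-1}.
\]
The ratio of these determinants is $\prod_m D_m(a)$, and the square root of the discriminant ratio equals the index because $L_0(a)\subset\cO_{K_a}$. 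I would present this determinant argument as the main step, since it needs only the triangular shape and integrality.

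The support statement follows directly: every prime divisor of every $D_m(a)$ divides $n$, by Theorem~\ref{thm:ND-basis}. For $q\nmid n$ the index is therefore a $q$-adic unit, so tensoring the inclusion $L_0(a)\subset\cO_{K_a}$ with $\Z_q$ gives an equality.

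For the $\perp$-lattices, I would apply Lemma~\ref{lem:exact-sequence}. Both lattices contain $1$, so $L_0(a)^\perp\cong L_0(a)/\Z$ and $\cO_{K_a}^\perp\cong\cO_{K_a}/\Z$, and both sit inside $K^0$ via $\alpha\mapsto\alpha^\perp$. Since this map is $\Z$-linear and commutes with $\otimes\Z_q$, equality of the $q$-adic completions of $L_0(a)$ and $\cO_{K_a}$ gives equality of their images. Alternatively, the index $[\cO_{K_a}^\perp:L_0(a)^\perp]$ equals $[\cO_{K_a}:L_0(a)]$ by the snake lemma, since the kernel $\Z\cdot1$ is common to both.

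The only delicate point is making sure that $\det T$ is computed with the correct orientation of the triangular matrix, and that the $\beta_m$ contribute nothing to it. Triangularity handles both.
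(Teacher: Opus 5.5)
Your proposal is correct and follows essentially the same route as the paper: a triangular change of basis between $(1,e_1,\dots,e_{n-1})$ and the normalized integral basis, with the index read off from the diagonal (you use the integral matrix with diagonal $1,D_1(a),\dots,D_{n-1}(a)$, while the paper uses the inverse matrix with diagonal entries $D_m(a)^{-1}$ and takes $|\det T|^{-1}$), then localization at $q\nmid n$ and the exact sequences of Lemma~\ref{lem:exact-sequence} for the $\perp$-lattices. Your discriminant-ratio computation is a valid alternative to the determinant step, but it is not needed.
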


\begin{proof}
Write the integral basis of Theorem~\ref{thm:ND-basis} as
\[
\mathcal B_a=\Bigl(1,\ \omega_m=\frac{\theta^m+\beta_m}{C_m(a)\,D_m(a)}\ (1\le m\le n-1)\Bigr).
\]
Since each $\beta_m$ is a $\Z$-linear combination of $1,\theta,\dots,\theta^{m-1}$, each $\omega_m$ is a
$\Q$-linear combination of $1,e_1,\dots,e_m$.
Thus the change-of-basis matrix $T$ from the ordered $\Z$-basis $(1,e_1,\dots,e_{n-1})$ of $L_0(a)$
to the ordered $\Z$-basis $\mathcal B_a$ of $\cO_{K_a}$ is upper triangular, with diagonal entries
\[
1,\ D_1(a)^{-1},\ \dots,\ D_{n-1}(a)^{-1}.
\]
Hence
\[
\det(T)=\prod_{m=1}^{n-1}D_m(a)^{-1}
\]
and therefore
\[
[\cO_{K_a}:L_0(a)]=|\det(T)|^{-1}=\prod_{m=1}^{n-1}D_m(a).
\]
By Theorem~\ref{thm:ND-basis}, every prime divisor of each $D_m(a)$ divides $n$, so the index is supported
only on primes dividing $n$.
Therefore
\[
L_0(a)\otimes\Z_q=\cO_{K_a}\otimes\Z_q\qquad(q\nmid n).
\]

Applying Lemma~\ref{lem:exact-sequence} to \(L_0(a)\) and to \(\cO_{K_a}\), and then
tensoring with \(\Z_q\) (which is flat over \(\Z\)), we obtain exact sequences
\[
0\to \Z_q \xrightarrow{\,t\mapsto t\cdot 1\,} L_0(a)\otimes \Z_q
   \to L_0(a)^\perp\otimes \Z_q \to 0
\]
and
\[
0\to \Z_q \xrightarrow{\,t\mapsto t\cdot 1\,} \cO_{K_a}\otimes \Z_q
   \to \cO_{K_a}^\perp\otimes \Z_q \to 0.
\]
Since \(L_0(a)\otimes \Z_q=\cO_{K_a}\otimes \Z_q\), the middle terms are equal, and the
left maps have the same image \(\Z_q\cdot 1\). Therefore the quotients are equal:
\[
L_0(a)^\perp\otimes \Z_q=\cO_{K_a}^\perp\otimes \Z_q.
\]
\end{proof}

This means that the passage from the diagonal monomial model to the true integral basis is encoded by a single rational upper-triangular matrix. The next lemma records its basic structure and arithmetic integrality properties.

\begin{lemma}[Triangular transition matrix]\label{lem:C-matrix}
Assume that $a$ is admissible and satisfies Hypothesis~\textup{(H)}, and let
\[
\Bigl\{1,\omega_m=\frac{\theta^m+\beta_m}{C_m(a)\,D_m(a)}\ (1\le m\le n-1)\Bigr\}
\]
be the normalized integral basis of Theorem~\ref{thm:ND-basis}.
Put
\[
b_m^\perp:=n\omega_m-\Tr(\omega_m)\in \cO_{K_a}^\perp,
\qquad
e_m^\perp:=n e_m\in L_0(a)^\perp
\qquad(1\le m\le n-1).
\]
Then:

\begin{enumerate}[label=(\alph*)]
\item There is a unique matrix $C(a)\in \GL_{n-1}(\Q)$ such that
\begin{equation}\label{eq:b-e-C}
(b_1^\perp,\dots,b_{n-1}^\perp)=(e_1^\perp,\dots,e_{n-1}^\perp)\,C(a),
\end{equation}
and $C(a)$ is upper triangular.

\item The diagonal entries are
\[
(C(a))_{m,m}=D_m(a)^{-1}\qquad(1\le m\le n-1).
\]
In particular,
\[
\det C(a)=\prod_{m=1}^{n-1}D_m(a)^{-1}.
\]

\item One has
\[
C(a)\in \GL_{n-1}(\Z[1/n]).
\]
More precisely, for every prime $q\nmid n$,
\[
C(a)\in \GL_{n-1}(\Z_q).
\]
\end{enumerate}
\end{lemma}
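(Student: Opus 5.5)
The proof reduces to expanding each \(\omega_m\) in the normalized monomials and then applying the \(\perp\)-map. Write \(\beta_m=\sum_{i=0}^{m-1}c_{m,i}\theta^i\) with \(c_{m,i}\in\Z\), and use \(\theta^i=C_i(a)e_i\) for \(i\ge1\). This gives
\[
\omega_m=\frac{c_{m,0}}{C_m(a)D_m(a)}+\sum_{i=1}^{m-1}\frac{c_{m,i}C_i(a)}{C_m(a)D_m(a)}\,e_i+\frac{1}{D_m(a)}\,e_m .
\]
By Corollary~\ref{cor:diag-gram}, \(\Tr(e_i)=0\). Hence \(\Tr(\omega_m)\) is \(n\) times the constant term, and
\[
b_m^\perp=n\omega_m-\Tr(\omega_m)=\sum_{i=1}^{m}T_{im}\,e_i^\perp,
\]
where \(T_{mm}=D_m(a)^{-1}\) and \(T_{im}=c_{m,i}C_i(a)/(C_m(a)D_m(a))\) for \(i<m\). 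Setting \(C(a):=(T_{im})\) gives \eqref{eq:b-e-C}. This matrix is upper triangular with the stated diagonal, so it is invertible and \(\det C(a)=\prod D_m(a)^{-1}\). Uniqueness holds because \(e_1^\perp,\dots,e_{n-1}^\perp\) are linearly independent: by Proposition~\ref{prop:orthogonality} they are nonzero and pairwise orthogonal, or alternatively they are nonzero multiples of distinct powers of \(\theta\). This settles (a) and (b).

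For (c), a direct look at the entries \(T_{im}\) is not enough, because the factor \(C_i(a)/C_m(a)\) may a priori have denominators at primes dividing \(a\). I will argue locally instead. Fix a prime \(q\nmid n\). By Lemma~\ref{lem:monomial-max-away-n}, \(L_0(a)^\perp\otimes\Z_q=\cO_{K_a}^\perp\otimes\Z_q\). Both \((e_m^\perp)\) and \((b_m^\perp)\) are \(\Z_q\)-bases of this common module. For \((b_m^\perp)\) this follows from Lemma~\ref{lem:exact-sequence}. For \((e_m^\perp)\), apply Lemma~\ref{lem:exact-sequence} to \(L_0(a)\) with basis \((1,e_1,\dots,e_{n-1})\); the images \(e_m^\perp\) then form a \(\Z\)-basis of \(L_0(a)^\perp\). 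Since \(C(a)\) is the transition matrix between two bases of the same free \(\Z_q\)-module, both \(C(a)\) and \(C(a)^{-1}\) have entries in \(\Z_q\), so \(C(a)\in\GL_{n-1}(\Z_q)\).

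Finally, an element of \(\Q\) lying in \(\Z_q\) for every \(q\nmid n\) lies in \(\Z[1/n]\). Applying this to the entries of \(C(a)\) and \(C(a)^{-1}\) gives \(C(a)\in\GL_{n-1}(\Z[1/n])\). The determinant gives a consistency check: \(\det C(a)=\prod D_m(a)^{-1}\) is a unit in \(\Z[1/n]\) because every prime dividing \(D_m(a)\) divides \(n\) (Theorem~\ref{thm:ND-basis}).

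The only delicate point is (c), specifically avoiding the explicit entry analysis in favour of the local equality of lattices. Here one must check that the \(e_m^\perp\) genuinely form a \(\Z\)-basis of \(L_0(a)^\perp\), which requires \(1\in L_0(a)\) and \(L_0(a)\subset\cO_{K_a}\). The first holds by construction, and the second is Proposition~\ref{prop:integrality-em}.
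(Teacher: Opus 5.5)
Your proposal is correct and follows the paper's proof essentially step for step: expand $\omega_m$ in the normalized monomials, apply $\alpha\mapsto\alpha^\perp$ to kill the constant term, read off the upper-triangular matrix with diagonal $D_m(a)^{-1}$, and obtain (c) from the local equality $L_0(a)^\perp\otimes\Z_q=\cO_{K_a}^\perp\otimes\Z_q$ of Lemma~\ref{lem:monomial-max-away-n}. The differences are cosmetic: you justify uniqueness explicitly, and you get integrality of $C(a)^{-1}$ over $\Z[1/n]$ from the same local argument, whereas the paper uses the fact that $\det C(a)$ is a unit in $\Z[1/n]$.
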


\begin{proof}
Write
\[
\beta_m=\sum_{r=0}^{m-1} b_{r,m}\theta^r
\qquad (b_{r,m}\in\Z),
\]
and adopt the convention $e_0:=1$ and $C_0(a):=1$.
Then
\begin{equation}\label{eq:omega-expand}
\omega_m
=\frac{\theta^m+\beta_m}{C_m(a)D_m(a)}
=\frac{1}{D_m(a)}\,e_m
 +\sum_{r=0}^{m-1}\frac{b_{r,m}}{D_m(a)}\frac{C_r(a)}{C_m(a)}\,e_r.
\end{equation}
Applying $\alpha\mapsto \alpha^\perp=n\alpha-\Tr(\alpha)$ kills the constant term, since $e_0^\perp=0$ and
$\Tr(e_r)=0$ for $1\le r\le n-1$.
Thus \eqref{eq:omega-expand} yields
\begin{equation}\label{eq:b-expand}
b_m^\perp
=\frac{1}{D_m(a)}\,e_m^\perp
 +\sum_{r=1}^{m-1}\frac{b_{r,m}}{D_m(a)}\frac{C_r(a)}{C_m(a)}\,e_r^\perp.
\end{equation}
This proves (a) and (b).

For (c), fix a prime \(q\nmid n\).
By Lemma~\ref{lem:monomial-max-away-n},
\[
L_0(a)^\perp\otimes \Z_q=\cO_{K_a}^\perp\otimes \Z_q.
\]
Therefore both \((e_1^\perp,\dots,e_{n-1}^\perp)\) and \((b_1^\perp,\dots,b_{n-1}^\perp)\)
are \(\Z_q\)-bases of the same \(\Z_q\)-lattice, so the transition matrix \(C(a)\) belongs to
\(\GL_{n-1}(\Z_q)\).

Since every entry of \(C(a)\) lies in \(\Q\), this implies
\[
C(a)\in M_{n-1}\!\Bigl(\Q\cap \bigcap_{q\nmid n}\Z_q\Bigr)
      = M_{n-1}(\Z[1/n]).
\]
Moreover,
\[
\det C(a)=\prod_{m=1}^{n-1}D_m(a)^{-1},
\]
and every prime divisor of every \(D_m(a)\) divides \(n\). Hence \(\det C(a)\) is a unit
of \(\Z[1/n]\), so in fact
\[
C(a)\in \GL_{n-1}(\Z[1/n]).
\]
\end{proof}

We now reach the central structural point of the paper: the actual Minkowski shape is obtained by conjugating an explicit diagonal archimedean matrix by the rational transition matrix coming from the integral basis. In other words, the archimedean and finite parts separate cleanly but do not decouple trivially.

\begin{theorem}[Discrete--archimedean factorization of the Minkowski shape]\label{thm:factorization}
Assume that $a$ is admissible and satisfies Hypothesis~\textup{(H)}.
Let $G^\perp(a)$ be the Gram matrix of the $\Z$-basis
\[
(J(b_1^\perp),\dots,J(b_{n-1}^\perp))
\]
of $J(\cO_{K_a}^\perp)$.
Then
\begin{equation}\label{eq:main-congruence}
G^\perp(a)=C(a)^{\mathsf T}\,G_0^\perp(a)\,C(a),
\end{equation}
where $C(a)$ is the transition matrix from Lemma~\ref{lem:C-matrix} and $G_0^\perp(a)$ is the diagonal matrix
from Lemma~\ref{lem:G0perp}.
Consequently, in shape space,
\begin{equation}\label{eq:shape-formula}
\operatorname{sh}(K_a)=\Bigl[\,C(a)^{\mathsf T}\,\diag\bigl(s_1(a),\dots,s_{n-1}(a)\bigr)\,C(a)\,\Bigr]\in \mathscr S_{n-1}.
\end{equation}
\end{theorem}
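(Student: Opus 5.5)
The proof combines Lemma~\ref{lem:gram-congruence} with the transition matrix of Lemma~\ref{lem:C-matrix}, and then reads the result in shape space.

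First, by Lemma~\ref{lem:C-matrix}(a) we have the identity \((b_1^\perp,\dots,b_{n-1}^\perp)=(e_1^\perp,\dots,e_{n-1}^\perp)\,C(a)\) in \(K_a^0\). Applying the \(\Q\)-linear (hence \(\R\)-linear after extension) Minkowski embedding \(J\) transports this to the same relation between \((J(b_m^\perp))\) and \((J(e_m^\perp))\) in \(K^0_{a,\R}\). Lemma~\ref{lem:gram-congruence}, applied with \(T=C(a)\), then gives
\[
G^\perp(a)=C(a)^{\mathsf T}\,G_0^\perp(a)\,C(a),
\]
where \(G_0^\perp(a)\) is the Gram matrix of the \(J(e_m^\perp)\). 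By Lemma~\ref{lem:G0perp}, \(G_0^\perp(a)=n^3\diag(s_1(a),\dots,s_{n-1}(a))\). This is \eqref{eq:main-congruence}.

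Second, I pass to shape space. The \(J(b_m^\perp)\) form a \(\Z\)-basis of \(J(\cO_{K_a}^\perp)\); this is Lemma~\ref{lem:exact-sequence} applied to the basis \(\mathcal B_a\), as already noted before Lemma~\ref{lem:monomial-max-away-n}. Choose an orthonormal basis of \(K^0_{a,\R}\) and write the lattice as \(\Z^{n-1}g\) in the row convention, with the rows of \(g\) the coordinates of the \(J(b_m^\perp)\). Then \(gg^{\mathsf T}=G^\perp(a)\). Under the identification \(\mathscr S_{n-1}\simeq \GL_{n-1}(\Z)\backslash(\Sym^+_{n-1}(\R)/\R_{>0})\), the shape \(\sh(K_a)\) is therefore the class of \(G^\perp(a)\). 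Two checks justify this. Changing the \(\Z\)-basis acts by \(UGU^{\mathsf T}\) with \(U\in\GL_{n-1}(\Z)\), which is the same as \(T^{\mathsf T}GT\) with \(T=U^{\mathsf T}\). Changing the orthonormal frame leaves \(gg^{\mathsf T}\) unchanged. Finally, the positive scalar \(n^3\) disappears modulo \(\R_{>0}\), which yields \eqref{eq:shape-formula}.

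The only point requiring care, and the main obstacle, is matching conventions. The column convention of Lemma~\ref{lem:gram-congruence} produces \(T^{\mathsf T}GT\), while the congruence action in shape space was written as \(UAU^{\mathsf T}\) under the row convention. I would verify explicitly that these agree, and that the bracket in \eqref{eq:shape-formula} means the class of this Gram matrix. Everything else is direct substitution of earlier results.
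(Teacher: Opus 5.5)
Your proposal is correct and follows the paper's proof: transport the relation \((b_1^\perp,\dots,b_{n-1}^\perp)=(e_1^\perp,\dots,e_{n-1}^\perp)\,C(a)\) through Lemma~\ref{lem:gram-congruence}, substitute Lemma~\ref{lem:G0perp}, and discard the scalar \(n^3\) in shape space. Your extra check is also correct: the column-convention congruence \(T^{\mathsf T}GT\) with \(T=U^{\mathsf T}\) agrees with the row-convention action \(UGU^{\mathsf T}\). It makes explicit what the paper leaves implicit.
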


\begin{proof}
Equation \eqref{eq:b-e-C} gives the change of basis from $(e_1^\perp,\dots,e_{n-1}^\perp)$ to
$(b_1^\perp,\dots,b_{n-1}^\perp)$ with transition matrix $C(a)$.
Applying Lemma~\ref{lem:gram-congruence} gives \eqref{eq:main-congruence}.
Substituting \eqref{eq:G0perp} yields an overall scalar factor $n^3$, which is irrelevant in shape space,
because $\mathscr S_{n-1}$ is defined modulo positive homotheties.
\end{proof}

The normalized integral basis of Theorem~\ref{thm:ND-basis} is not canonical:
the reduced corrections are well determined modulo the indicated \(p\)-power
denominators, but different lifts may be chosen in the explicit construction;
see \cite[Remark~1.7]{JakharKhandujaSangwan2021} and \cite[Theorem~2.1]{NguyenDang2025MinPeriod}.
The next lemma shows that the unipotent right coset defined below is
independent of this choice.

\begin{lemma}[Well-definedness of the unipotent class]
\label{lem:Xi-well-defined}
Assume that \(a\) is admissible and satisfies Hypothesis~\textup{(H)}.
Let
\[
\mathcal B=
\Bigl\{1,\ \omega_m=\frac{\theta^m+\beta_m}{C_m(a)D_m(a)}\ (1\le m\le n-1)\Bigr\}
\]
and
\[
\mathcal B'=
\Bigl\{1,\ \omega_m'=\frac{\theta^m+\beta_m'}{C_m(a)D_m(a)}\ (1\le m\le n-1)\Bigr\}
\]
be two integral bases of \(\cO_{K_a}\) of the normalized form of
Theorem~\ref{thm:ND-basis}, with each \(\beta_m,\beta_m'\) a
\(\Z\)-linear combination of \(1,\theta,\dots,\theta^{m-1}\).
For \(1\le m\le n-1\), define
\[
b_m^\perp:=n\omega_m-\Tr_{K_a/\Q}(\omega_m),
\qquad
b_m'{}^\perp:=n\omega_m'-\Tr_{K_a/\Q}(\omega_m'),
\]
and let \(C(a),C'(a)\in \GL_{n-1}(\Q)\) be the unique matrices such that
\[
(b_1^\perp,\dots,b_{n-1}^\perp)
=(e_1^\perp,\dots,e_{n-1}^\perp)\,C(a),
\]
\[
(b_1'{}^\perp,\dots,b_{n-1}'{}^\perp)
=(e_1^\perp,\dots,e_{n-1}^\perp)\,C'(a).
\]
Then there exists \(U\in U_{n-1}^+(\Z)\) such that
\[
C'(a)=C(a)\,U,
\]
here
\[
U_{n-1}^+(\Z):=\{U\in \GL_{n-1}(\Z): U \text{ is upper triangular with all diagonal entries }1\}
\]
denotes the upper-unipotent subgroup; similarly for $U_{n-1}^+(\Z_p)$. In particular, the right coset \(C(a)\,U_{n-1}^+(\Z)\) is independent of the
choice of normalized integral basis.
\end{lemma}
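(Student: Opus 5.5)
The plan is to compare the two bases of \(\cO_{K_a}\) directly and then pass to the trace-zero lattice. Both \(\mathcal B\) and \(\mathcal B'\) are \(\Z\)-bases of \(\cO_{K_a}\). By Lemma~\ref{lem:exact-sequence}, the tuples \((b_m^\perp)\) and \((b_m'{}^\perp)\) are therefore both \(\Z\)-bases of the same lattice \(\cO_{K_a}^\perp\). Hence the matrix
\[
U:=C(a)^{-1}C'(a)
\]
satisfies \((b_1'{}^\perp,\dots,b_{n-1}'{}^\perp)=(b_1^\perp,\dots,b_{n-1}^\perp)\,U\). Because it is a transition matrix between two \(\Z\)-bases of one free \(\Z\)-module, \(U\in\GL_{n-1}(\Z)\). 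It only remains to show that \(U\) is upper triangular with diagonal entries \(1\).

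For triangularity I would invoke Lemma~\ref{lem:C-matrix}(a),(b), which applies verbatim to any basis of the normalized form; the proof there uses only the shape \(\omega_m=(\theta^m+\beta_m)/(C_mD_m)\) with \(\beta_m\in\Z\langle 1,\dots,\theta^{m-1}\rangle\). It gives that both \(C(a)\) and \(C'(a)\) are upper triangular with the same diagonal \(\bigl(D_1(a)^{-1},\dots,D_{n-1}(a)^{-1}\bigr)\). The point is that \(C_m(a)\) depends only on \(a\), and \(D_m(a)\) is the same denominator appearing in both bases by hypothesis (and by Definition~\ref{def:normalized-integral-basis}). Upper-triangular invertible matrices form a group, and the diagonal of a product is the product of the diagonals. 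So \(U=C(a)^{-1}C'(a)\) is upper triangular with diagonal entries \(D_m\cdot D_m^{-1}=1\), and therefore \(U\in U^+_{n-1}(\Z)\).

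As a consistency check, and as an alternative route, one can see this directly at the level of \(\cO_{K_a}\). The element \(\omega_m'-\omega_m=(\beta_m'-\beta_m)/(C_mD_m)\) lies in \(\cO_{K_a}\cap\Q\langle 1,e_1,\dots,e_{m-1}\rangle\). This intersection is the \(\Z\)-span of \(1,\omega_1,\dots,\omega_{m-1}\), since the basis is triangular with respect to the filtration by degree in \(\theta\). Consequently \(\omega_m'=\omega_m+\sum_{r<m}u_{r,m}\omega_r+c\) with \(u_{r,m},c\in\Z\), and applying \(\perp\) kills \(c\).

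The only step needing care is the integrality of \(U\). The argument above, via Lemma~\ref{lem:exact-sequence}, settles it without touching the residue data. I therefore expect no genuine obstacle; the main thing to get right is that the diagonal entries agree, which requires the same \(D_m(a)\) for both bases. The final assertion, that the coset \(C(a)U^+_{n-1}(\Z)\) is independent of the choice, is then immediate.
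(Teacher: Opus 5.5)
Your proof is correct and is essentially the paper's argument: the transition matrix between two $\Z$-bases of the same lattice is integral and unimodular, and since both triangular transition matrices have diagonal $\bigl(D_m(a)^{-1}\bigr)$, the quotient is unipotent. The only difference is the level at which you work. The paper uses the $n\times n$ matrices $T,T'$ from $(1,e_1,\dots,e_{n-1})$ to the two bases of $\cO_{K_a}$, shows $T^{-1}T'\in\GL_n(\Z)$ is upper unipotent, and then applies $\alpha\mapsto\alpha^\perp$ to extract the lower-right block $U$. You instead work directly on $\cO_{K_a}^\perp$ via Lemma~\ref{lem:exact-sequence} and Lemma~\ref{lem:C-matrix}(a),(b); your ``alternative route'' is closer to the paper's version.
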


\begin{proof}
Let \(T,T'\in \GL_n(\Q)\) be the transition matrices from the ordered basis
\[
(1,e_1,\dots,e_{n-1})
\]
to the ordered bases
\[
(1,\omega_1,\dots,\omega_{n-1})
\qquad\text{and}\qquad
(1,\omega_1',\dots,\omega_{n-1}').
\]
Because each \(\beta_m\) and \(\beta_m'\) is a \(\Z\)-linear combination of
\(1,\theta,\dots,\theta^{m-1}\), both \(T\) and \(T'\) are upper triangular.
Their diagonal entries are
\[
1,\ D_1(a)^{-1},\ \dots,\ D_{n-1}(a)^{-1},
\]
so \(T\) and \(T'\) have the same diagonal.

Since both ordered tuples are \(\Z\)-bases of the same lattice \(\cO_{K_a}\),
the matrix
\[
V:=T^{-1}T'
\]
belongs to \(\GL_n(\Z)\).
Because \(T\) and \(T'\) are upper triangular with the same diagonal, \(V\) is
upper triangular with all diagonal entries equal to \(1\).
Hence \(V\) has a block decomposition
\[
V=
\begin{pmatrix}
1 & * \\
0 & U
\end{pmatrix}
\qquad\text{with }U\in U_{n-1}^+(\Z).
\]

Now
\[
(1,\omega_1',\dots,\omega_{n-1}')
=
(1,\omega_1,\dots,\omega_{n-1})\,V.
\]
Apply the \(\Q\)-linear map \(\alpha\mapsto \alpha^\perp=n\alpha-\Tr(\alpha)\).
Since \(1^\perp=0\), this gives
\[
(b_1'{}^\perp,\dots,b_{n-1}'{}^\perp)
=
(b_1^\perp,\dots,b_{n-1}^\perp)\,U.
\]
Using
\[
(b_1^\perp,\dots,b_{n-1}^\perp)
=(e_1^\perp,\dots,e_{n-1}^\perp)\,C(a)
\]
and
\[
(b_1'{}^\perp,\dots,b_{n-1}'{}^\perp)
=(e_1^\perp,\dots,e_{n-1}^\perp)\,C'(a),
\]
we obtain
\[
(e_1^\perp,\dots,e_{n-1}^\perp)\,C'(a)
=
(e_1^\perp,\dots,e_{n-1}^\perp)\,C(a)\,U.
\]
Since \(e_1^\perp,\dots,e_{n-1}^\perp\) is a basis of \(K_a^0\), it follows that
\[
C'(a)=C(a)\,U.
\]
\end{proof}

The matrix \(C(a)\) itself depends on the chosen normalized integral basis, but Lemma~\ref{lem:Xi-well-defined} shows that a smaller quotient survives canonically. This is the discrete invariant that records the rational part of the shape factorization.

\begin{definition}[Unipotent discrete invariant attached to a normalized presentation]
\label{def:Xi}
Choose any normalized integral basis attached to \(S(a)\), in the sense of
Definition~\ref{def:normalized-integral-basis},
\[
\Bigl\{1,\ \omega_m=\frac{\theta^m+\beta_m}{C_m(a)D_m(a)}
\ (1\le m\le n-1)\Bigr\},
\]
and let \(C(a)\) be the matrix of Lemma~\ref{lem:C-matrix}.
By Lemma~\ref{lem:Xi-well-defined}, the right coset of \(C(a)\) modulo
\(U_{n-1}^+(\Z)\) is independent of the chosen normalized integral basis.
We define
\[
\Xi(a):=[C(a)]\in \GL_{n-1}(\Q)\big/U_{n-1}^+(\Z).
\]

For a right coset \(\Xi=C\,U_{n-1}^+(\Z)\in \GL_{n-1}(\Q)/U_{n-1}^+(\Z)\) and a positive
diagonal matrix \(D\), define
\[
[\Xi^{\mathsf T}D\Xi]:=[C^{\mathsf T}DC]\in \mathscr S_{n-1}.
\]
This is well-defined: if \(C'=CU\) with \(U\in U_{n-1}^+(\Z)\), then
\[
C'^{\mathsf T}DC' = U^{\mathsf T}(C^{\mathsf T}DC)U,
\]
and \(U\in \GL_{n-1}(\Z)\), so \(C'^{\mathsf T}DC'\) and \(C^{\mathsf T}DC\) represent the
same point of \(\mathscr S_{n-1}\).
\end{definition}

After quotienting out the basis ambiguity, the factorization becomes conceptually cleaner: the shape is determined by the diagonal archimedean data together with the unipotent class \(\Xi(a)\).

\begin{corollary}[Factorization through the unipotent class]
\label{cor:Xi-factorization}
Assume that \(a\) is admissible and satisfies Hypothesis~\textup{(H)}.
Then for any representative \(C\in \Xi(a)\),
\[
\operatorname{sh}(K_a)
=
\Bigl[\,C^{\mathsf T}\,\diag\bigl(s_1(a),\dots,s_{n-1}(a)\bigr)\,C\,\Bigr].
\]
Equivalently, with the notation of Definition~\ref{def:Xi},
\[
\operatorname{sh}(K_a)
=
\Bigl[\Xi(a)^{\mathsf T}\,\diag\bigl(s_1(a),\dots,s_{n-1}(a)\bigr)\,\Xi(a)\Bigr].
\]
\end{corollary}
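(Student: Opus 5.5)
The corollary follows by combining Theorem~\ref{thm:factorization} with Lemma~\ref{lem:Xi-well-defined} and the well-definedness observation in Definition~\ref{def:Xi}; no new arithmetic input is needed.

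Fix a representative \(C\in\Xi(a)\). By definition of \(\Xi(a)\) there is some normalized integral basis attached to \(S(a)\) whose transition matrix \(C(a)\), in the sense of Lemma~\ref{lem:C-matrix}, lies in the coset \(\Xi(a)\). Hence \(C=C(a)U\) for some \(U\in U_{n-1}^+(\Z)\).

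Theorem~\ref{thm:factorization} applies to that chosen basis and gives
\[
\operatorname{sh}(K_a)=\bigl[C(a)^{\mathsf T}\diag(s_1(a),\dots,s_{n-1}(a))C(a)\bigr].
\]
Its proof uses only that the basis has the normalized shape of Theorem~\ref{thm:ND-basis}, namely that each \(\beta_m\) is an integral combination of lower powers of \(\theta\) and the denominators are \(C_m(a)D_m(a)\). So it holds for every normalized basis, not just the one fixed earlier.

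Next, write \(D=\diag(s_1(a),\dots,s_{n-1}(a))\). Then
\[
C^{\mathsf T}DC=U^{\mathsf T}\bigl(C(a)^{\mathsf T}DC(a)\bigr)U.
\]
Since \(U\in\GL_{n-1}(\Z)\), this is an integral congruence, and integral congruence is exactly the \(\GL_{n-1}(\Z)\)-action defining \(\mathscr S_{n-1}\simeq\GL_{n-1}(\Z)\backslash(\Sym^+_{n-1}(\R)/\R_{>0})\). Under the row convention, the action on the left by \(U^{\mathsf T}\) on the Gram side corresponds to a change of \(\Z\)-basis of the lattice. Therefore the two classes coincide, which proves the first displayed formula.

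The second formula is then just the notation \([\Xi^{\mathsf T}D\Xi]:=[C^{\mathsf T}DC]\) from Definition~\ref{def:Xi}. That notation is well defined by the same congruence computation.

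The only point needing care is this last identification. One must check that replacing \(C(a)\) by \(C(a)U\) acts on the Gram matrix by \(U^{\mathsf T}(\cdot)U\), which is the congruence action of \(\GL_{n-1}(\Z)\) appropriate to the chosen convention; this is the transformation rule of Lemma~\ref{lem:gram-congruence}. Beyond that, the argument is formal.
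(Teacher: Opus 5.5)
Your proposal is correct and follows the paper's route. The paper's proof is a one-line appeal to Theorem~\ref{thm:factorization} together with the congruence computation $C'^{\mathsf T}DC'=U^{\mathsf T}(C^{\mathsf T}DC)U$ recorded in Definition~\ref{def:Xi}, which is exactly what you spell out; your remark that Theorem~\ref{thm:factorization} applies to every normalized basis, not only the fixed $\mathcal B_a$, is a harmless clarification that the paper leaves implicit.
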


\begin{proof}
Immediate from Theorem~\ref{thm:factorization} and
Definition~\ref{def:Xi}.
\end{proof}

Although \(C(a)\) varies with \(a\), its denominators are uniformly controlled in terms of the degree alone. This quantitative bound will later let us pass from rational matrices to finitely many support leaves.

\begin{proposition}[A sharper uniform common denominator for \(C(a)\)]
\label{prop:uniform-denominator}
Fix \(n\ge 3\) and let $a$ be admissible and satisfy Hypothesis~\textup{(H)}.
For
\[
N_n^{\sharp}:=\prod_{p^e\parallel n} p^{\,e+n-2}
\]
and every admissible \(a\) satisfying Hypothesis~\textup{(H)}, one has
\[
N_n^{\sharp}\,C(a)\in M_{n-1}(\Z).
\]
Equivalently, all denominators of the entries of \(C(a)\) are bounded in terms
of \(n\) alone, and the exponent \(e+n-2\) suffices at each prime
\(p^e\parallel n\).
\end{proposition}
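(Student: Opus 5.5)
The plan is to read off the entries of \(C(a)\) from the explicit expansion \eqref{eq:b-expand} in the proof of Lemma~\ref{lem:C-matrix}, and then bound their \(p\)-adic valuations one prime at a time. That expansion gives
\[
C(a)_{m,m}=D_m(a)^{-1},\qquad
C(a)_{r,m}=\frac{b_{r,m}}{D_m(a)}\cdot\frac{C_r(a)}{C_m(a)}\quad(1\le r<m),
\]
with \(b_{r,m}\in\Z\), and all remaining entries are zero. By Lemma~\ref{lem:C-matrix}(c), \(C(a)\in M_{n-1}(\Z[1/n])\), so only primes \(p\mid n\) can occur in denominators. It therefore suffices to show that for each \(p^e\parallel n\) every entry has \(p\)-adic valuation at least \(-(e+n-2)\). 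Multiplying by \(N_n^\sharp\) then clears all denominators.

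Fix \(p^e\parallel n\). For the factor \(D_m(a)^{-1}\), Remark~\ref{rem:kpm-bound} gives \(v_p(D_m(a)^{-1})=-k_{p,m}\ge -e\). This already settles the diagonal entries, and \(v_p(b_{r,m})\ge 0\) is trivial. The only remaining factor is the ratio \(C_r(a)/C_m(a)\), whose valuation is controlled using Hypothesis~\textup{(H)} and the strong decomposition.

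\begin{itemize}
\item If \(v_p(a)=0\), then \(p\) divides no \(a_j\), so the ratio is a \(p\)-adic unit.
\item Otherwise \(j:=v_p(a)\in\{1,\dots,n-1\}\), and by Lemma~\ref{lem:strong-uniq} \(p\) divides \(a_j\) only. Then
\[
v_p\!\Bigl(\frac{C_r(a)}{C_m(a)}\Bigr)
=\Bigl\lfloor\frac{jr}{n}\Bigr\rfloor-\Bigl\lfloor\frac{jm}{n}\Bigr\rfloor
\ge -\Bigl\lceil\frac{j(m-r)}{n}\Bigr\rceil .
\]
Since \(1\le m-r\le n-2\) and \(j\le n-1\), we have
\[
\frac{j(m-r)}{n}\le\frac{(n-1)(n-2)}{n}=n-3+\frac{2}{n}<n-2 .
\]
Hence the ceiling is at most \(n-2\).
\end{itemize}

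Combining the three factors gives \(v_p(C(a)_{r,m})\ge -e-(n-2)\) for every entry. Multiplying over \(p\mid n\) yields \(N_n^\sharp C(a)\in M_{n-1}(\Z)\).

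The main point needing care is the elementary floor-function estimate, specifically that \(\lfloor jm/n\rfloor-\lfloor jr/n\rfloor\le\lceil j(m-r)/n\rceil\). This is the standard inequality \(\lfloor x+y\rfloor\le\lfloor x\rfloor+\lceil y\rceil\).

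There is also a bookkeeping point: under (H) each prime \(p\mid n\) lies in at most one \(a_j\), so no cross terms between different \(a_j\) arise. (H) is used only through the integral-basis theorem and the uniqueness of the strong decomposition. Everything else follows directly from Remark~\ref{rem:kpm-bound} and Lemma~\ref{lem:C-matrix}.
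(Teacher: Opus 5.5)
Your proposal is correct and follows the paper's proof essentially step for step: you reduce to primes $p\mid n$ via Lemma~\ref{lem:C-matrix}(c), bound $v_p(D_m(a)^{-1})\ge -e$ by Remark~\ref{rem:kpm-bound}, and bound $v_p\bigl(C_r(a)/C_m(a)\bigr)\ge -(n-2)$ via the strong decomposition. The only cosmetic difference is your floor estimate through $\lfloor x+y\rfloor\le\lfloor x\rfloor+\lceil y\rceil$, where the paper simply notes that both $\lfloor tr/n\rfloor$ and $\lfloor tm/n\rfloor$ lie in $\{0,\dots,n-2\}$.
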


\begin{proof}
By Lemma~\ref{lem:C-matrix}\textup{(c)}, we already know that
\[
C(a)\in \GL_{n-1}(\Z[1/n]).
\]
Thus only primes \(p\mid n\) can occur in denominators of the entries of \(C(a)\). It
therefore suffices to bound, for each fixed \(p^e\parallel n\), the negative \(p\)-adic
valuations of the entries of \(C(a)\).

By Remark~\ref{rem:kpm-bound},
\[
0\le k_{p,m}\le e\qquad(1\le m\le n-1).
\]
Hence the diagonal entries satisfy
\[
v_p\bigl((C(a))_{m,m}\bigr)=-k_{p,m}\ge -e.
\]

Now let \(1\le r<m\le n-1\). By \eqref{eq:b-expand},
\[
(C(a))_{r,m}
=
\frac{b_{r,m}}{D_m(a)}\frac{C_r(a)}{C_m(a)}.
\]
Since \(b_{r,m}\in\Z\), we have \(v_p(b_{r,m})\ge 0\), and since \(v_p(D_m(a))\le e\),
\[
v_p\!\left(\frac1{D_m(a)}\right)\ge -e.
\]

If \(v_p(a)=0\), then \(p\nmid C_t(a)\) for every \(t\), so
\[
v_p\!\left(\frac{C_r(a)}{C_m(a)}\right)=0.
\]
If \(v_p(a)=t\in\{1,\dots,n-1\}\), then \(p\mid a_t\) and \(p\nmid a_j\) for \(j\neq t\), hence
\[
v_p\!\left(\frac{C_r(a)}{C_m(a)}\right)
=
\Bigl\lfloor\frac{tr}{n}\Bigr\rfloor-\Bigl\lfloor\frac{tm}{n}\Bigr\rfloor.
\]
Because \(1\le r<m\le n-1\) and \(1\le t\le n-1\), both floors lie in \(\{0,\dots,n-2\}\), and
therefore
\[
\Bigl\lfloor\frac{tr}{n}\Bigr\rfloor-\Bigl\lfloor\frac{tm}{n}\Bigr\rfloor
\ge -(n-2).
\]
So in all cases
\[
v_p\bigl((C(a))_{r,m}\bigr)\ge -(e+n-2).
\]

Thus every entry of \(C(a)\) has \(p\)-adic valuation at least \(-(e+n-2)\). Multiplying by
\[
N_n^{\sharp}:=\prod_{p^e\parallel n} p^{\,e+n-2}
\]
clears all denominators, i.e.
\[
N_n^{\sharp}\,C(a)\in M_{n-1}(\Z).
\]
\end{proof}

The periodic datum \(S(a)\) controls the normalized integral-basis pattern, but it need not control the finer unipotent class. The following sextic example makes that distinction completely explicit.

\begin{example}[Global counterexample: \(S(a)\) does not determine the unipotent class]
\label{prop:counterexample-C}
For \(n=6\), let \(a=10\) and \(a'=550\).
Then both \(a\) and \(a'\) are admissible, both satisfy Hypothesis~\textup{(H)}, and
\[
S(a)=S(a'),
\qquad\text{but}\qquad
\Xi(a)\neq \Xi(a').
\]
In particular, the unipotent class \(\Xi(a)\) is not determined by the datum \(S(a)\).
\end{example}

\begin{proof}
Both $10$ and $550$ are sixth-power-free, distinct from $\pm1$, and $x^6-10$, $x^6-550$ are irreducible
by Eisenstein.
Moreover,
\[
550\equiv 10\pmod{36},
\qquad
M(6)=2^2 3^2=36,
\]
so Theorem~\ref{thm:min-periodicity} gives
\[
S(10)=S(550).
\]

Next, both integers satisfy the sextic congruence conditions $A_1$ and $B_2$:
\[
10\equiv 2\pmod4,\quad 10\equiv 1\pmod9,
\qquad
550\equiv 2\pmod4,\quad 550\equiv 1\pmod9.
\]
Hence both are of Type \((A_1,B_2)\) in the notation of
\cite[Table~1]{JakharKalwaniyaRayRoy2026}. For this type,
\cite[Proposition~3.4]{JakharKalwaniyaRayRoy2026} gives the full \(6\times6\)
transition matrix
\[
T_{1,2}(m)=
\begin{pmatrix}
1&0&0&0& C_4(m)/3 &0\\
0&1&0&0&0& C_5(m)/3\\
0&0&1&0& C_2(m)C_4(m)m/3 &0\\
0&0&0&1&0& C_3(m)C_5(m)m/3\\
0&0&0&0&1/3&0\\
0&0&0&0&0&1/3
\end{pmatrix}.
\]
Here \(T_{1,2}(m)\) is the transition matrix between the ordered bases
\[
(1,e_1,\dots,e_5)\qquad\text{and}\qquad (1,\omega_1,\dots,\omega_5).
\]

Applying the map \(\alpha\mapsto \alpha^\perp\) entrywise kills the first basis vector,
since \(1^\perp=0\). Therefore the corresponding transition matrix on trace-zero bases is
obtained by deleting the first row and the first column of \(T_{1,2}(m)\). Thus, with
respect to the ordered bases \((e_1^\perp,\dots,e_5^\perp)\) and
\((b_1^\perp,\dots,b_5^\perp)\), one gets
\[
C(m)=
\begin{pmatrix}
1&0&0&0&C_5(m)/3\\
0&1&0&C_2(m)C_4(m)m/3&0\\
0&0&1&0&C_3(m)C_5(m)m/3\\
0&0&0&1/3&0\\
0&0&0&0&1/3
\end{pmatrix}.
\]

Now compute the strong-decomposition factors.
For $10=2\cdot 5$, one has
\[
C_2(10)=C_3(10)=C_4(10)=C_5(10)=1,
\]
so
\[
C(10)=
\begin{pmatrix}
1&0&0&0&1/3\\
0&1&0&10/3&0\\
0&0&1&0&10/3\\
0&0&0&1/3&0\\
0&0&0&0&1/3
\end{pmatrix}.
\]
For $550=2\cdot 5^2\cdot 11$, one has
\[
C_2(550)=1,\qquad C_3(550)=C_4(550)=C_5(550)=5,
\]
hence
\[
C(550)=
\begin{pmatrix}
1&0&0&0&5/3\\
0&1&0&2750/3&0\\
0&0&1&0&13750/3\\
0&0&0&1/3&0\\
0&0&0&0&1/3
\end{pmatrix}.
\]

Assume for contradiction that
\[
C(550)=C(10)\,U
\qquad\text{for some }U\in U_5^+(\Z).
\]
Since the first three columns of both matrices are the standard basis vectors $E_1,E_2,E_3$, the fourth column of
$C(10)U$ is
\[
\operatorname{col}_4(C(10))+u_{14}E_1+u_{24}E_2+u_{34}E_3.
\]
Its second entry is therefore $10/3+u_{24}$.
But the second entry of the fourth column of $C(550)$ is $2750/3$.
Thus
\[
u_{24}=\frac{2750-10}{3}=\frac{2740}{3}\notin \Z,
\]
a contradiction.
Therefore \(C(10)\) and \(C(550)\) are not right equivalent modulo
\(U_5^+(\Z)\). Thus the corresponding right $U_5^+(\mathbf Z)$-cosets are distinct. 
\end{proof}

It is also worth to note that the datum $S(a)$ is periodic modulo $M(n)$ by Theorem~\ref{thm:min-periodicity}, but the
invariant $\Xi(a)$ need not be.
Example~\ref{prop:counterexample-C} gives explicit sextic parameters $10\equiv 550\pmod{36}=M(6)$ for which
\[
S(10)=S(550)\qquad\text{but}\qquad \Xi(10)\neq \Xi(550).
\]

\begin{remark}[Presentation dependence and the role of \(\Xi(a)\)]
\label{rem:Xi-noncanonical}
\label{rem:holmes-corrected}
\label{rem:sextic-corrected}
The factorization in Corollary~\ref{cor:Xi-factorization} is a convenient
normalization of the chosen pure presentation \(K_a=\Q(\theta)\); it is not a
canonical decomposition of the abstract field.  Both the diagonal data
\((s_m(a))_{m=1}^{n-1}\) and the class \(\Xi(a)\) depend on the presentation
and on the normalized monomial model.  What is intrinsic is the resulting point
of shape space, and later the supporting rational diagonal leaf.

Right multiplication \(C(a)\mapsto C(a)U\) with
\(U\in U_{n-1}^+(\Z)\) corresponds to replacing the ordered \(\Z\)-basis
\((b_1^\perp,\dots,b_{n-1}^\perp)\) of the fixed lattice
\(\cO_{K_a}^\perp\) by another ordered basis obtained by adding integral
combinations of earlier basis vectors.  Hence
\[
(C(a)U)^{\mathsf T}\diag\bigl(s_1(a),\dots,s_{n-1}(a)\bigr)(C(a)U)
=
U^{\mathsf T}
\Bigl(
C(a)^{\mathsf T}\diag\bigl(s_1(a),\dots,s_{n-1}(a)\bigr)C(a)
\Bigr)U,
\]
so only the right \(U_{n-1}^+(\Z)\)-class survives in shape space.
\end{remark}

\begin{remark}[What the periodic datum controls]
\label{rem:Sp-not-enough}
\label{rem:S-versus-C}
Formula~\eqref{eq:b-expand} shows that the coefficients of \(C(a)\) involve the
ratios \(C_r(a)/C_m(a)\), so primes dividing \(a\) re-enter the shape
computation through the strong decomposition.  By contrast, the datum \(S(a)\)
records only the denominator exponents \(k_{p,m}\) and the reduced classes
\(\beta_m \bmod p^{k_{p,m}}\) at primes \(p\mid n\).  Thus \(S(a)\) controls
the normalized integral-basis pattern, but not in general the class
\(\Xi(a)\).  Example~\ref{prop:counterexample-C} shows this explicitly.  In
particular, the periodicity of \(S(a)\) modulo \(M(n)\) does not extend to
\(\Xi(a)\).
\end{remark}

\section{Shape as a complete invariant}\label{sec:shape-complete}

For a number field $K$, write
\[
\operatorname{sig}(K)=(r_1(K),r_2(K))
\]
for its signature.

\subsection{Odd degree}

To extract field-theoretic information from shape, it is convenient to forget the specific normalized basis and work with an arbitrary \(\Z\)-basis of \(\cO_{K_a}^\perp\). The next lemma shows that the same diagonal model still governs every such choice.

\begin{lemma}[Gram factorization in all degrees]
\label{lem:all-gram-factorization}
Let \(n\ge 3\) and let \(a\) be admissible. Let
\[
K_a=\Q(\theta),\qquad \theta^n=a,
\]
and let
\[
\eta=(\eta_1,\dots,\eta_{n-1})
\]
be a \(\Z\)-basis of \(\cO_{K_a}^\perp\).
Let \(G_\eta(a)\) be the Gram matrix of
\[
J(\eta_1),\dots,J(\eta_{n-1})
\]
with respect to \(\langle\cdot,\cdot\rangle_\infty\).
Then there exists a unique matrix \(A_\eta(a)\in \GL_{n-1}(\Q)\) such that
\[
(\eta_1,\dots,\eta_{n-1})=(e_1,\dots,e_{n-1})A_\eta(a).
\]
Consequently,
\begin{equation}\label{eq:all-gram-factorization}
G_\eta(a)=A_\eta(a)^{\mathsf T}\bigl(n\Delta(a)\bigr)A_\eta(a),
\qquad
\Delta(a):=\diag\bigl(s_1(a),\dots,s_{n-1}(a)\bigr).
\end{equation}
\end{lemma}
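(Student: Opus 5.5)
The plan is to show that \((e_1,\dots,e_{n-1})\) is a \(\Q\)-basis of the trace-zero space \(K_a^0\), and then apply Lemma~\ref{lem:gram-congruence} together with Corollary~\ref{cor:diag-gram}.

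First, I will identify \(K_a^0\). Because \(x^n-a\) is irreducible, \(\{1,\theta,\dots,\theta^{n-1}\}\) is a \(\Q\)-basis of \(K_a\), and so \(\{1,e_1,\dots,e_{n-1}\}\) is one as well, since each \(e_m\) is a nonzero rational multiple of \(\theta^m\). By Corollary~\ref{cor:diag-gram}, \(\Tr(e_m)=0\) for \(1\le m\le n-1\), while \(\Tr(1)=n\neq0\). The trace is a nonzero linear functional, so \(K_a^0\) has dimension \(n-1\), and the \(n-1\) independent vectors \(e_1,\dots,e_{n-1}\) lying in it form a \(\Q\)-basis of \(K_a^0\).

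Next, I will produce \(A_\eta(a)\). By Definition~\ref{def:perp}, each \(\eta_j\in\cO_{K_a}^\perp\subset K_a^0\), so each \(\eta_j\) has unique rational coordinates in the basis \((e_m)\). Collecting these coordinate columns, following the right-action convention of Lemma~\ref{lem:gram-congruence}, gives a unique \(A_\eta(a)\in M_{n-1}(\Q)\) with \((\eta_j)=(e_m)A_\eta(a)\). This matrix is invertible: by Lemma~\ref{lem:exact-sequence}, the \(\eta_j\) are a \(\Z\)-basis of a rank-\((n-1)\) lattice in the \((n-1)\)-dimensional space \(K_a^0\), hence \(\Q\)-linearly independent. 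Therefore \(A_\eta(a)\in\GL_{n-1}(\Q)\).

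Finally, I will extend scalars to \(\R\). The map \(J\) is \(\Q\)-linear, so \((J(\eta_j))=(J(e_m))A_\eta(a)\) in \(K_{a,\R}^0\). Lemma~\ref{lem:gram-congruence} then gives \(G_\eta(a)=A_\eta(a)^{\mathsf T}\,\mathrm{Gram}(e_1,\dots,e_{n-1})\,A_\eta(a)\), and Corollary~\ref{cor:diag-gram} identifies the middle factor as \(n\Delta(a)\). This yields \eqref{eq:all-gram-factorization}.

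There is no real obstacle here. The only point that needs care is checking that the \(e_m\) span \emph{all} of \(K_a^0\) rather than just lie in it, which is exactly the dimension count in the first step. Note that Hypothesis~\textup{(H)} is not used anywhere, since no integral-basis data enters the argument.
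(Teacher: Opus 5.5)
Your proposal is correct and follows essentially the same route as the paper: show that \(e_1,\dots,e_{n-1}\) is a \(\Q\)-basis of \(K_a^0\) using trace zero, linear independence and the dimension count, take the coordinate matrix, then apply Lemma~\ref{lem:gram-congruence} with Corollary~\ref{cor:diag-gram}. Your one addition, that \(A_\eta(a)\) is invertible because the \(\eta_j\) are \(\Q\)-linearly independent, is a step the paper leaves implicit.
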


\begin{proof}
Since \(e_m=\theta^m/C_m(a)\) is a nonzero rational multiple of \(\theta^m\),
the elements \(e_1,\dots,e_{n-1}\) are \(\Q\)-linearly independent.
By Corollary~\ref{cor:diag-gram}, each \(e_m\) has trace zero, so
\[
e_1,\dots,e_{n-1}\in K_a^0.
\]
As \(\dim_\Q K_a^0=n-1\), they form a \(\Q\)-basis of \(K_a^0\).
Hence there is a unique matrix \(A_\eta(a)\in \GL_{n-1}(\Q)\) such that
\[
(\eta_1,\dots,\eta_{n-1})=(e_1,\dots,e_{n-1})A_\eta(a).
\]
By Corollary~\ref{cor:diag-gram}, the Gram matrix of
\[
J(e_1),\dots,J(e_{n-1})
\]
is \(n\Delta(a)\).
Lemma~\ref{lem:gram-congruence} therefore gives
\[
G_\eta(a)=A_\eta(a)^{\mathsf T}\bigl(n\Delta(a)\bigr)A_\eta(a).
\]
\end{proof}

Shape equality forces a rational congruence between two diagonal models, and the first invariant one can read off from that relation is the field generated by diagonal ratios. This is the basic field recovered from the archimedean side alone.

\begin{theorem}[Shape determines the ratio field]\label{thm:shape-ratio-field}
For admissible \(a\), define
\[
F_a:=\Q\!\left(\frac{s_i(a)}{s_j(a)}:1\le i,j\le n-1\right)\subset \R.
\]
If
\[
\operatorname{sh}(K_a)=\operatorname{sh}(K_b),
\]
then
\[
F_a=F_b.
\]
\end{theorem}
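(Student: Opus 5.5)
Since \(\operatorname{sh}(K_a)=\operatorname{sh}(K_b)\), the two trace-zero lattices are similar. By Lemma~\ref{lem:all-gram-factorization}, their Gram matrices factor through the diagonal models \(n\Delta(a)\) and \(n\Delta(b)\) via rational matrices. Equality of shapes therefore becomes a rational congruence between scalar multiples of \(\Delta(a)\) and \(\Delta(b)\), and the plan is to read the ratio fields off that congruence.

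\textbf{Step 1: set up the congruence.} Choose \(\Z\)-bases \(\eta\) of \(\cO_{K_a}^\perp\) and \(\eta'\) of \(\cO_{K_b}^\perp\). Equality in \(\mathscr S_{n-1}\) gives \(U\in\GL_{n-1}(\Z)\) and \(\lambda>0\) with
\[
G_{\eta'}(b)=\lambda\, U^{\mathsf T}G_\eta(a)U .
\]
Substituting \eqref{eq:all-gram-factorization} for both sides and putting \(R:=A_\eta(a)\,U\,A_{\eta'}(b)^{-1}\in\GL_{n-1}(\Q)\), this becomes
\[
\Delta(b)=\lambda\, R^{\mathsf T}\Delta(a)R .
\]

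\textbf{Step 2: remove the real scalar \(\lambda\).} Divide the relation by \(s_1(b)\) and write \(\Delta(a)=s_1(a)\,\Delta'(a)\), where \(\Delta'(a)\) is diagonal with entries \(s_i(a)/s_1(a)\in F_a\). Comparing \((1,1)\)-entries gives
\[
1=\mu\,(R^{\mathsf T}\Delta'(a)R)_{11},\qquad \mu:=\lambda\, s_1(a)/s_1(b).
\]
The entry \((R^{\mathsf T}\Delta'(a)R)_{11}\) lies in \(F_a\), and it is nonzero because \(\Delta'(a)\) is positive definite and \(R\) is invertible. Hence \(\mu\in F_a\). It follows that every entry of
\[
\Delta(b)/s_1(b)=\mu\,R^{\mathsf T}\Delta'(a)R
\]
lies in \(F_a\), since \(R\) has rational entries. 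The diagonal entries of the left side are \(s_i(b)/s_1(b)\). Because \(s_i/s_j=(s_i/s_1)/(s_j/s_1)\), these ratios generate \(F_b\), and so \(F_b\subseteq F_a\).

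\textbf{Step 3: the reverse inclusion.} The hypothesis is symmetric in \(a\) and \(b\), so the same argument gives \(F_a\subseteq F_b\), and therefore \(F_a=F_b\).

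The main point needing care is Step 2. The scale \(\lambda\) is an arbitrary positive real, so it must be shown to lie in \(F_a\) after normalisation, and this requires the \((1,1)\)-entry to be nonzero, which positivity supplies. A second point is that \(R\) must be rational: this holds because both \(A_\eta(a)\) and \(A_{\eta'}(b)\) lie in \(\GL_{n-1}(\Q)\) by Lemma~\ref{lem:all-gram-factorization}, which rests on the monomials \(e_m\) forming a \(\Q\)-basis of the trace-zero space.
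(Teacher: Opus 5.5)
Your proof is correct and is essentially the paper's argument. Both proofs reduce shape equality, via Lemma~\ref{lem:all-gram-factorization}, to a rational congruence $\Delta(b)=\lambda R^{\mathsf T}\Delta(a)R$, and both eliminate the real scale using a positive, hence nonzero, diagonal entry. The paper writes $s_i/s_j$ directly as a quotient of two diagonal expressions, while you first solve for the normalized scalar $\mu$ from the $(1,1)$-entry, which is the same computation.
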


\begin{proof}
Choose \(\Z\)-bases
\[
\eta=(\eta_1,\dots,\eta_{n-1})\subset \cO_{K_a}^\perp,\qquad
\nu=(\nu_1,\dots,\nu_{n-1})\subset \cO_{K_b}^\perp,
\]
and let \(G_\eta(a)\), \(G_\nu(b)\) be the corresponding Gram matrices.
Equality of shapes means that there exist \(U\in \GL_{n-1}(\Z)\) and
\(c\in \R_{>0}\) such that
\[
G_\eta(a)=c\,U^{\mathsf T}G_\nu(b)U.
\]

Let \(A_\eta(a),A_\nu(b)\in \GL_{n-1}(\Q)\) be the unique coordinate matrices
given by Lemma~\ref{lem:all-gram-factorization}. Then
\[
G_\eta(a)=A_\eta(a)^{\mathsf T}\bigl(n\Delta(a)\bigr)A_\eta(a),
\qquad
G_\nu(b)=A_\nu(b)^{\mathsf T}\bigl(n\Delta(b)\bigr)A_\nu(b).
\]
Cancelling the common factor \(n\) and conjugating by \(A_\eta(a)^{-1}\)
yields
\[
\Delta(a)=c\,M^{\mathsf T}\Delta(b)M,
\qquad
M:=A_\nu(b)\,U\,A_\eta(a)^{-1}\in \GL_{n-1}(\Q).
\]
Write \(M=(m_{ij})\).
Taking diagonal entries gives, for each \(i\),
\[
s_i(a)=c\sum_{k=1}^{n-1} m_{ki}^2\,s_k(b).
\]

Fix \(j\). Since the \(j\)-th column of \(M\) is nonzero and all
\(s_k(b)>0\), the denominator
\[
\sum_{k=1}^{n-1} m_{kj}^2\,\frac{s_k(b)}{s_1(b)}
\]
is nonzero. Therefore
\[
\frac{s_i(a)}{s_j(a)}
=
\frac{\sum_{k=1}^{n-1} m_{ki}^2\,\dfrac{s_k(b)}{s_1(b)}}
     {\sum_{k=1}^{n-1} m_{kj}^2\,\dfrac{s_k(b)}{s_1(b)}}\in F_b.
\]
Hence \(F_a\subseteq F_b\).
By symmetry, \(F_b\subseteq F_a\), and therefore \(F_a=F_b\).
\end{proof}

In the pure setting, that abstract ratio field has a very concrete description. The next corollary identifies it with the obvious radical field generated by the basic archimedean scale.

\begin{corollary}[Explicit description of the ratio field]\label{cor:ratio-field-explicit}
For every admissible \(a\),
\[
F_a=\Q\bigl(|a|^{2/n}\bigr).
\]
\end{corollary}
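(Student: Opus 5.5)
Set \(\alpha:=|a|^{2/n}\in\R_{>0}\). The aim is to prove \(F_a=\Q(\alpha)\) by showing both inclusions, using only the definition \(s_m(a)=|a|^{2m/n}/C_m(a)^2\) with \(C_m(a)\in\Z_{>0}\).

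For \(F_a\subseteq\Q(\alpha)\), note that \(s_m(a)=\alpha^m/C_m(a)^2\). Hence each ratio
\[
\frac{s_i(a)}{s_j(a)}=\frac{C_j(a)^2}{C_i(a)^2}\,\alpha^{\,i-j}
\]
is a rational multiple of an integral power of \(\alpha\), possibly a negative power. Since \(\alpha\neq 0\), this ratio lies in \(\Q(\alpha)\).

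For \(\Q(\alpha)\subseteq F_a\), take \(i=2\), \(j=1\), which is allowed because \(n\ge 3\) gives \(n-1\ge 2\). Then
\[
\frac{s_2(a)}{s_1(a)}=\frac{C_1(a)^2}{C_2(a)^2}\,\alpha .
\]
Multiplying by the nonzero rational \(C_2(a)^2/C_1(a)^2\) shows \(\alpha\in F_a\). (In fact \(C_1(a)=\prod_j a_j^{\lfloor j/n\rfloor}=1\), but this is not needed.)

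There is no real obstacle here. The one point to watch is that \(F_a\) is defined as the field generated by the real numbers \(s_i/s_j\) inside \(\R\), so \(\Q(\alpha)\) must also be read as the subfield of \(\R\) generated by the positive real number \(\alpha\). With that reading the two fields are compared as subfields of \(\R\), and the statement is exactly the equality of these subfields.
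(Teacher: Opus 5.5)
Your proposal is correct and follows essentially the same route as the paper: write each ratio as a rational multiple of a power of \(|a|^{2/n}\) for one inclusion, and recover \(|a|^{2/n}\) from \(s_2(a)/s_1(a)\) for the other. The only difference is that the paper invokes \(C_1(a)=1\) in the second step, which, as you note, is not needed.
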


\begin{proof}
Since
\[
s_m(a)=\frac{|a|^{2m/n}}{C_m(a)^2},
\]
every ratio \(s_i(a)/s_j(a)\) belongs to \(\Q(|a|^{2/n})\), so
\[
F_a\subseteq \Q(|a|^{2/n}).
\]
On the other hand, \(C_1(a)=1\), and therefore
\[
\frac{s_2(a)}{s_1(a)}
=
\frac{|a|^{4/n}/C_2(a)^2}{|a|^{2/n}}
=
\frac{|a|^{2/n}}{C_2(a)^2}.
\]
Hence
\[
|a|^{2/n}=C_2(a)^2\,\frac{s_2(a)}{s_1(a)}\in F_a.
\]
Thus \(F_a=\Q(|a|^{2/n})\).
\end{proof}

Once the ratio field is known explicitly, odd degree becomes rigid because squaring does not lose information when \(\gcd(2,n)=1\). This is where the parity dichotomy first appears sharply.

\begin{corollary}[Odd degree]\label{cor:odd-complete-short}
Assume that \(n\) is odd.
If
\[
\operatorname{sh}(K_a)=\operatorname{sh}(K_b),
\]
then
\[
K_a\simeq K_b.
\]
\end{corollary}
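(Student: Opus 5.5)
The plan is to pass from equality of shapes to equality of ratio fields, and then from the ratio fields to the fields themselves. By Theorem~\ref{thm:shape-ratio-field}, the hypothesis \(\operatorname{sh}(K_a)=\operatorname{sh}(K_b)\) gives \(F_a=F_b\). Corollary~\ref{cor:ratio-field-explicit} then rewrites this as
\[
\Q\bigl(|a|^{2/n}\bigr)=\Q\bigl(|b|^{2/n}\bigr)\subset\R .
\]
So the whole task is to identify \(\Q(|a|^{2/n})\) with a real model of \(K_a\) when \(n\) is odd.

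For odd \(n\), let \(\beta_a\) be the unique real root of \(x^n-a\), as in the proof of Lemma~\ref{lem:odd-s-independence}. Then \(|\beta_a|=|a|^{1/n}\) and \(\beta_a=\pm|a|^{1/n}\), so \(|a|^{2/n}=\beta_a^2\). This gives the inclusion \(\Q(\beta_a^2)\subseteq \Q(\beta_a)\).

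For the reverse inclusion, write \(n=2k+1\). Then
\[
\beta_a=\beta_a^{n}\cdot \bigl(\beta_a^{2}\bigr)^{-k}=a\,(\beta_a^2)^{-k}\in \Q(\beta_a^2),
\]
since \(a\neq 0\). Hence \(\Q(|a|^{2/n})=\Q(\beta_a)\). Because \(x^n-a\) is irreducible and \(\beta_a\) is one of its roots, \(\Q(\beta_a)\simeq K_a\) as abstract fields. This is the same observation that the \(\Q\)-basis \(\{1,\beta_a,\dots,\beta_a^{n-1}\}\) is a basis of \(K_a\) in Lemma~\ref{lem:odd-s-independence}.

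Combining the two steps gives
\[
K_a\simeq \Q(\beta_a)=\Q(|a|^{2/n})=\Q(|b|^{2/n})=\Q(\beta_b)\simeq K_b .
\]
These are equalities of subfields of \(\R\), so the chain yields \(K_a\simeq K_b\).

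No step is a real obstacle, since the analytic content already lives in Theorem~\ref{thm:shape-ratio-field}. The one point needing care is the inversion \(\beta_a\in\Q(\beta_a^2)\), which is exactly where oddness of \(n\) (that is, \(\gcd(2,n)=1\)) enters. I would flag that in even degree this inversion fails, and that failure is the source of the core-field statement in Theorem~\ref{thm:intro-parity}(ii).
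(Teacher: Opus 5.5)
Your proposal is correct and follows the paper's proof. Shape equality gives $F_a=F_b$ by Theorem~\ref{thm:shape-ratio-field}, Corollary~\ref{cor:ratio-field-explicit} identifies $F_a=\Q(|a|^{2/n})=\Q(\beta_a^2)$, and oddness of $n$ gives $\beta_a\in\Q(\beta_a^2)$. The paper does that last step with a B\'ezout relation $2u+nv=1$; your identity $\beta_a=a(\beta_a^2)^{-k}$ is the explicit case $u=-k$, $v=1$.
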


\begin{proof}
By Theorem~\ref{thm:shape-ratio-field} and Corollary~\ref{cor:ratio-field-explicit},
\[
\Q\bigl(|a|^{2/n}\bigr)=\Q\bigl(|b|^{2/n}\bigr).
\]
Let \(\beta_a\) be the unique real root of \(x^n-a\). Then \(\beta_a^2=|a|^{2/n}\).
Since \(\gcd(2,n)=1\), there exist integers \(u,v\) with \(2u+nv=1\), and therefore
\[
\beta_a=(\beta_a^2)^u a^v\in \Q(\beta_a^2)=\Q\bigl(|a|^{2/n}\bigr).
\]
Thus
\[
K_a=\Q(\beta_a)=\Q\bigl(|a|^{2/n}\bigr).
\]
Similarly,
\[
K_b=\Q\bigl(|b|^{2/n}\bigr).
\]
Hence \(K_a\simeq K_b\).
\end{proof}

In even degree the same argument no longer recovers the full field, but it still recovers a canonical core radicand field. This is the unconditional part of the even-degree story.

\begin{corollary}[Even degree: the unconditional consequence]\label{cor:even-core}
Assume that \(n=2r\) is even.
If
\[
\operatorname{sh}(K_a)=\operatorname{sh}(K_b),
\]
then
\[
\Q\bigl(|a|^{1/r}\bigr)=\Q\bigl(|b|^{1/r}\bigr).
\]
Equivalently,
\[
\Q\bigl(|a|^{2/n}\bigr)=\Q\bigl(|b|^{2/n}\bigr).
\]
\end{corollary}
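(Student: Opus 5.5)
The plan is to reduce the statement directly to Theorem~\ref{thm:shape-ratio-field} and Corollary~\ref{cor:ratio-field-explicit}. Neither result assumes anything about the parity of \(n\), and neither uses Hypothesis~\textup{(H)}. Indeed, Lemma~\ref{lem:all-gram-factorization} works with an arbitrary \(\Z\)-basis of \(\cO_{K_a}^\perp\), so it does not need the normalized integral basis.

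\emph{Step 1.} From the equality \(\operatorname{sh}(K_a)=\operatorname{sh}(K_b)\), Theorem~\ref{thm:shape-ratio-field} gives \(F_a=F_b\) inside \(\R\).

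\emph{Step 2.} Corollary~\ref{cor:ratio-field-explicit} rewrites this as \(\Q(|a|^{2/n})=\Q(|b|^{2/n})\). This is the second ("equivalently") form of the claim.

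\emph{Step 3.} Identify the two forms. Since \(n=2r\), we have \(|a|^{2/n}=|a|^{1/r}\) as positive real numbers, and likewise for \(b\). Hence \(\Q(|a|^{2/n})=\Q(|a|^{1/r})\) literally as subfields of \(\R\), and the first displayed equality follows.

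There is no real obstacle here. The only points to check are these:
\begin{enumerate}[label=\textup{(\alph*)},leftmargin=2.2em]
\item All fields involved are the specific real subfields \(\Q(|a|^{1/r})\subset\R\), generated by the positive real root. Equality is therefore a genuine equality of subfields of \(\R\), not merely an isomorphism.
\item Corollary~\ref{cor:ratio-field-explicit} uses \(C_1(a)=1\) and \(C_2(a)\in\Z_{>0}\). Both hold for every admissible \(a\), so the corollary applies in even degree as well.
\end{enumerate}

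I would close with a remark on why odd degree is different. In Corollary~\ref{cor:odd-complete-short} the identity \(2u+nv=1\) recovers \(\beta_a\) from \(\beta_a^2\), which is what makes the shape a complete invariant there. When \(n\) is even this step fails, so only the core field \(\Q(|a|^{1/r})\) is recovered and the conclusion cannot be upgraded to \(K_a\simeq K_b\).
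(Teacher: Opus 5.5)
Your proposal is correct and is essentially the paper's argument: the paper also deduces the corollary directly from Theorem~\ref{thm:shape-ratio-field} and Corollary~\ref{cor:ratio-field-explicit}, using $|a|^{2/n}=|a|^{1/r}$ for $n=2r$. One small caveat about your closing remark: the failure of the B\'ezout step only shows that this argument stops at the core field, whereas the fact that the conclusion genuinely cannot be upgraded to $K_a\simeq K_b$ comes from Theorem~\ref{thm:even-Sh-ambiguity}.
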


\begin{proof}
This is immediate from Corollary~\ref{cor:ratio-field-explicit}.
\end{proof}

\begin{remark}[What the shape recovers in even degree]\label{rem:even-obstruction}
Let $n=2r$.
The field recovered from the diagonal ratios is
\[
F_a=\Q\bigl(|a|^{2/n}\bigr)=\Q\bigl(|a|^{1/r}\bigr).
\]
This is the unconditional conclusion.
One should not, however, state that $F_a$ is always a subfield (or even a proper subfield) of $K_a$.

Indeed, if $a>0$, then $\theta^2\in K_a$ and
\[
(\theta^2)^r=a=|a|,
\]
so $F_a=\Q(\theta^2)\subset K_a$.
If $a<0$ and $r$ is odd, then $-\theta^2\in K_a$ and
\[
(-\theta^2)^r=(-1)^r\theta^{2r}=(-1)^r a=|a|,
\]
so again $F_a=\Q(-\theta^2)\subset K_a$.
By contrast, if $a<0$ and $r$ is even, then
\[
(\theta^2)^r=a=-|a|,
\]
and obtaining an $r$th root of $|a|$ from $\theta^2$ would require multiplying by an $r$th root of $-1$.
There is no general reason for such an element to lie in $K_a$.
Thus the correct general statement is that the shape determines the radicand field
$\Q(|a|^{1/r})$; the preceding argument does not identify it, in general, as an actual subfield of $K_a$.
\end{remark}

\subsection{Even degree}
\label{subsec:even-sign-squarefree}

Fix an even integer
\[
n=2r\ge 4.
\]

To show that the even-degree obstruction is genuine rather than a formal defect of the proof, we need an explicit supply of pure fields whose rings of integers are given by the power basis. The following criterion is exactly the tool for that.

\begin{theorem}[Nguyen-Dang--Nguyen \(\alpha\)-monogeneity criterion]
\label{thm:NDH-monogenic}
Let \(K=\Q(\alpha)\) with \(\alpha\) a root of an irreducible polynomial
\[
x^n-m\in \Z[x].
\]
Then
\[
\cO_K=\Z[\alpha]
\]
if and only if \(m\) is squarefree and
\[
v_p(m^p-m)=1
\qquad\text{for every prime }p\mid n.
\]
\end{theorem}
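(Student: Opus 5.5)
The plan is to prove the criterion prime by prime with Dedekind's criterion, since the index \([\cO_K:\Z[\alpha]]\) is supported on primes dividing \(\disc(x^n-m)=\pm n^n m^{n-1}\). So \(\cO_K=\Z[\alpha]\) holds if and only if \(\Z[\alpha]\) is \(q\)-maximal for every prime \(q\mid nm\). First I will reformulate the arithmetic condition. If \(p\mid m\), then \(v_p(m^p-m)=v_p(m)+v_p(m^{p-1}-1)=v_p(m)\). If \(p\nmid m\), then \(v_p(m^p-m)=v_p(m^{p-1}-1)\ge 1\). Hence the stated condition is equivalent to: \(m\) is squarefree, and for every \(p\mid n\) with \(p\nmid m\) one has \(m^{p}\not\equiv m\pmod{p^2}\).

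\emph{Primes dividing \(m\).} Let \(q\mid m\), whether or not \(q\mid n\). Then \(x^n-m\equiv x^n\pmod q\). In Dedekind's criterion take \(g=x\) and \(h=x^{n-1}\), so that \(F=(x^n-m-x^n)/q=-m/q\). Then \(\Z[\alpha]\) is \(q\)-maximal if and only if \(\bar x\nmid \overline{F}\), that is, if and only if \(q^2\nmid m\). Running over all primes dividing \(m\), this gives the equivalence of \(q\)-maximality for all \(q\mid m\) with \(m\) being squarefree. This settles the primes dividing \(m\), including those dividing \(n\).

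\emph{Primes \(p\mid n\) with \(p\nmid m\); this is the main step.} Write \(n=p^e n'\) with \(p\nmid n'\).
\begin{enumerate}
\item Modulo \(p\), Fermat gives \(m\equiv m^{p^e}\), so \(x^n-m\equiv(x^{n'}-\mu)^{p^e}\) with \(\mu:=m^{p^{e}}\) (any integer lift of the appropriate \(p^e\)-th root of \(\bar m\) would also do). The polynomial \(x^{n'}-\bar\mu\) is separable over \(\mathbb F_p\) because \(p\nmid n'm\).
\item In Dedekind's criterion I take \(g=x^{n'}-\mu\) (a lift of the radical) and \(h=(x^{n'}-\mu)^{p^e-1}\), and set
\[
F=\frac{x^n-m-(x^{n'}-\mu)^{p^e}}{p}.
\]
The criterion says \(\Z[\alpha]\) is \(p\)-maximal if and only if \(\overline F\) does not vanish at any root \(\xi\in\overline{\mathbb F}_p\) of \(x^{n'}-\bar\mu\).
\item Expand \((y-\mu)^{p^e}\) with \(y=x^{n'}\). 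The middle binomial terms are divisible by \(p\), and after division by \(p\) they become multiples of \((y-\mu)\) at a root, except that the coefficient structure needs care when \(e\ge 2\). I expect this bookkeeping to be the fiddly point. The cleanest approach is to reduce to the identity
\[
(y-\mu)^{p^e}\equiv y^{p^e}-\mu^{p^e}+p\,(y-\mu)R(y)\pmod{p^2}
\]
for some \(R\in\Z[y]\). This follows by induction on \(e\) from \((A+pB)^p\equiv A^p\pmod{p^2}\) together with \((y-\mu)^p=y^p-\mu^p+p(\cdots)\), where the \(p(\cdots)\) term is visibly divisible by \(y-\mu\).
\item With this identity, evaluating at \(\xi\) gives
\[
\overline F(\xi)=\overline{(\mu^{p^e}-m)/p}=\overline{(m^{p^{2e}}-m)/p}.
\]
\item Finally, \(m^{p^{k}}\equiv m\pmod{p^2}\) holds for some \(k\ge 1\) if and only if it holds for \(k=1\). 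This is because \(m\mapsto m^p\) is the identity on the Teichmüller part modulo \(p^2\), and \(m\) is a Teichmüller lift modulo \(p^2\) exactly when \(m^p\equiv m\pmod{p^2}\). Hence \(p\)-maximality is equivalent to \(m^p\not\equiv m\pmod{p^2}\).
\end{enumerate}

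Combining the two cases with the reformulation above yields the criterion. The main obstacle is the congruence in step 3, needed to make \(\overline F(\xi)\) computable for general \(e\). If the induction proves awkward, I would instead cite the known \(p\)-maximality criterion for \(x^n-m\) from \cite{JakharKhandujaSangwan2021} at primes \(p\mid n\), \(p\nmid m\).
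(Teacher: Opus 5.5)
Your proof is correct. It necessarily takes a different route from the paper, which gives no argument and simply quotes \cite[Theorem~2.7]{NguyenDangHung2025Alpha}.

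You derive the criterion prime by prime from Dedekind's criterion:
\begin{enumerate}
\item Since the index is supported on primes dividing $\disc(x^n-m)=\pm n^nm^{n-1}$, only primes $q\mid nm$ matter.
\item At $q\mid m$ (whether or not $q\mid n$), the choice $g=x$, $h=x^{n-1}$ gives $\overline F=\overline{-m/q}$, so $q$-maximality is exactly $q^2\nmid m$.
\item At $p\mid n$ with $p\nmid m$, the separable radical $x^{n'}-\bar m$ reduces $p$-maximality to the nonvanishing of the constant $\overline{(\mu^{p^e}-m)/p}$.
\item Your reformulation of $v_p(m^p-m)=1$ matches these local conditions exactly.
\end{enumerate}

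The step you flagged as fiddly is easier than you feared. For every $e\ge 1$, the polynomial $(y-\mu)^{p^e}-(y^{p^e}-\mu^{p^e})$ has all coefficients divisible by $p$ (for $p=2$ the constant term is $2\mu^{2^e}$), and it vanishes at $y=\mu$. Dividing first by $p$ and then by the monic $y-\mu$ gives $(y-\mu)^{p^e}=y^{p^e}-\mu^{p^e}+p(y-\mu)R(y)$ exactly in $\Z[y]$. So neither the induction nor the fallback citation is needed.

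Two smaller simplifications:
\begin{enumerate}
\item The lift $\mu=m$ is the natural choice. Any lift works, because Dedekind's criterion does not depend on the lifts chosen.
\item Your step 5 follows from the elementary fact that $u\equiv v\pmod p$ implies $u^p\equiv v^p\pmod{p^2}$. This gives $m^{p^k}\equiv m^p\pmod{p^2}$ for all $k\ge 1$, with no Teichm\"uller language.
\end{enumerate}

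Your treatment of $q\mid m$ silently uses $n\ge 2$, so that $x$ is a repeated factor. The statement is in fact false for $n=1$ (take $m=4$), but this is harmless in the paper's range $n\ge 3$.

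As for what each approach buys: the citation keeps the paper short, which is reasonable because the criterion is used only to produce the monogenic pairs $K_{\pm 2q}$ in Theorem~\ref{thm:even-Sh-ambiguity}. Your proof makes that input self-contained and shows where each hypothesis enters: squarefreeness comes from the primes dividing $m$, and the Wieferich-type condition $m^p\not\equiv m\pmod{p^2}$ comes from the primes dividing $n$ but not $m$.
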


\begin{proof}
This is \cite[Theorem~2.7]{NguyenDangHung2025Alpha}.
\end{proof}

We also use the following even-degree core-field consequence established earlier in the paper:
for admissible parameters \(a,b\),
\begin{equation}\label{eq:even-core-field-consequence}
\sh(K_a)=\sh(K_b)
\quad\Longrightarrow\quad
\Q\!\bigl(|a|^{1/r}\bigr)=\Q\!\bigl(|b|^{1/r}\bigr).
\end{equation}

The next theorem shows that even-degree shape really can forget sign in infinitely many cases. In other words, the ambiguity detected above is not an artifact of the method but a genuine geometric phenomenon.

\begin{theorem}[Sign ambiguity for the shape]\label{thm:even-Sh-ambiguity}
Assume that $n=2r \geq 4$ is even.
Then there exist infinitely many squarefree integers $a>0$ such that
\[
\operatorname{sh}(K_a)=\operatorname{sh}(K_{-a})
\qquad\text{but}\qquad
K_a\not\simeq K_{-a}.
\]
\end{theorem}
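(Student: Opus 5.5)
The plan is to exhibit an explicit infinite family of squarefree \(a>0\) for which both \(K_a\) and \(K_{-a}\) are monogenic with the power basis. Since \(|a|=|-a|\), the diagonal Gram matrices of Lemma~\ref{lem:G0perp} then coincide, which gives equal shapes. Non-isomorphism will come from the signature.

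\emph{Choice of the family.} I take \(a=\operatorname{rad}(n)\,q\) with \(q\) a prime not dividing \(n\). There are infinitely many such \(a\), and each is positive, squarefree and \(\neq\pm1\). The polynomials \(x^n\mp a\) are irreducible by Eisenstein at \(q\), so \(a\) and \(-a\) are admissible, and Hypothesis~\textup{(H)} holds because they are squarefree. For the monogeneity criterion, Theorem~\ref{thm:NDH-monogenic}, take a prime \(p\mid n\) and \(m=\pm a\). Then \(v_p(m)=1\), so
\[
v_p(m^p-m)=v_p(m)+v_p(m^{p-1}-1)=1,
\]
because \(m^{p-1}-1\equiv -1\pmod p\). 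Hence \(\cO_{K_{a}}=\Z[\theta]\) and \(\cO_{K_{-a}}=\Z[\theta']\), where \(\theta^n=a\) and \((\theta')^n=-a\). (I will check the sign conventions for \(p=2\) explicitly; with \(v_2(\pm a)=1\) both \(a^2-a\) and \(a^2+a\) have \(2\)-adic valuation \(1\).)

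\emph{Equality of shapes.} Since \(\pm a\) is squarefree, its strong decomposition has \(a_1=a\) and all other \(a_j=1\). Thus \(C_m(\pm a)=a^{\lfloor m/n\rfloor}=1\) for \(1\le m\le n-1\), so \(e_m=\theta^m\). The trace-zero lattice \(\cO_{K_{\pm a}}^\perp\) then has \(\Z\)-basis \(n\theta^m\) for \(1\le m\le n-1\). By Lemma~\ref{lem:G0perp} its Gram matrix is
\[
n^3\diag\bigl(|a|^{2/n},\dots,|a|^{2(n-1)/n}\bigr),
\]
which depends only on \(|a|\). Hence the two Gram matrices are literally equal, and \(\sh(K_a)=\sh(K_{-a})\). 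Equivalently, one may apply Theorem~\ref{thm:factorization} with \(C=I\) in both cases.

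\emph{Non-isomorphism.} Since \(n\) is even and \(a>0\), the polynomial \(x^n-a\) has the real roots \(\pm a^{1/n}\), so \(r_1(K_a)=2\). On the other hand, \(x^n+a\) has no real roots, so \(K_{-a}\) is totally complex. Different signatures force \(K_a\not\simeq K_{-a}\).

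None of these steps looks like a real obstacle. The only point needing care is the verification of the valuation condition in Theorem~\ref{thm:NDH-monogenic} for both signs, at \(p=2\) and at odd \(p\). Choosing \(a\) divisible by \(\operatorname{rad}(n)\) makes \(v_p(\pm a)=1\), and this renders that verification uniform.
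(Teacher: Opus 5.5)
Your proof is correct, and its skeleton matches the paper's. Both $K_a$ and $K_{-a}$ are shown to have power-basis rings of integers via Theorem~\ref{thm:NDH-monogenic}. The two trace-zero Gram matrices are literally equal to $n^3\diag\bigl(|a|^{2m/n}\bigr)$, because only $|a|$ enters. The signatures $(2,r-1)$ and $(0,r)$ then separate the fields.

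The difference lies in the family. The paper takes $a=2q$, with $q$ a prime in a residue class modulo $4\prod_{p\mid n,\ p\text{ odd}}p^2$ chosen so that $a\equiv 1+p\pmod{p^2}$. For odd $p\mid n$ this keeps $p\nmid a$, so $v_p(a^p-a)=1$ must be forced through the congruence $a^{p-1}\not\equiv 1\pmod{p^2}$, and Dirichlet's theorem is needed to supply infinitely many such $q$.

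Your choice $a=\operatorname{rad}(n)\,q$ puts every prime divisor of $n$ into $a$. The local condition then becomes automatic: it is just the fact that $x^n\mp a$ is Eisenstein at $p$. You only need infinitely many primes $q\nmid n$. In fact primality of $q$ is irrelevant: $a=\operatorname{rad}(n)k$ with any squarefree $k$ prime to $n$ works, and irreducibility already follows from Eisenstein at $2$.

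With your family you could even drop the external monogeneity criterion. Every prime dividing $\disc(x^n\mp a)=\pm n^n a^{n-1}$ divides $a$, and $x^n\mp a$ is Eisenstein at each such prime. An Eisenstein prime never divides the power-basis index, so $\Z[\theta]$ and $\Z[\theta']$ are already maximal.

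The paper's congruence construction produces examples with $a$ prime to the odd part of $n$. Nothing in the statement requires this, so your route is the more economical one.
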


\begin{proof}
Set
\[
N:=4\prod_{\substack{p\mid n\\ p\ \mathrm{odd}}} p^2.
\]
For each odd prime \(p\mid n\), choose a residue class \(c_p\pmod{p^2}\) satisfying
\[
2c_p\equiv 1+p \pmod{p^2}.
\]
Also impose the congruence
\[
c\equiv 1 \pmod 4.
\]
By the Chinese remainder theorem, there exists an integer \(c\) modulo \(N\) satisfying
\[
c\equiv c_p \pmod{p^2}\quad (p\mid n,\ p\ \mathrm{odd}),
\qquad
c\equiv 1\pmod 4.
\]
Since \(c\) is coprime to \(N\), Dirichlet's theorem yields infinitely many primes
\[
q\equiv c \pmod N.
\]
Fix such a prime \(q\) and put
\[
a:=2q.
\]

Then \(a>0\) and \(a\) is squarefree. Hence \(a\) is \(n\)th-power-free, and for every prime
\(p\mid n\) we have \(v_p(a)\in\{0,1\}\); therefore Hypothesis~\textup{(H)} holds.
Moreover, since \(q\mid a\) but \(q^2\nmid a\), both
\[
x^n-a\qquad\text{and}\qquad x^n+a
\]
are Eisenstein at \(q\), hence irreducible over \(\Q\). Thus \(a\) and \(-a\) are admissible.

We now verify the criterion.

If \(p=2\), then \(a=2q\) with \(q\) odd, so
\[
v_2(a^2-a)=v_2\bigl(2q(2q-1)\bigr)=1,
\]
and
\[
v_2((-a)^2-(-a))=v_2(a^2+a)=v_2\bigl(2q(2q+1)\bigr)=1.
\]

Now let \(p\mid n\) be odd. Since \(q\equiv c\pmod{p^2}\), we have
\[
a=2q\equiv 2c\equiv 1+p \pmod{p^2}.
\]
In particular \(p\nmid a\), and
\[
a^{p-1}\equiv (1+p)^{p-1}\equiv 1+(p-1)p \equiv 1-p \not\equiv 1 \pmod{p^2}.
\]
Since \(p\nmid a\), Fermat's little theorem gives
\[
a^{p-1}\equiv 1 \pmod p,
\]
while the displayed congruence shows
\[
a^{p-1}\not\equiv 1 \pmod{p^2}.
\]
Hence
\[
v_p(a^{p-1}-1)=1,
\]
and therefore
\[
v_p(a^p-a)=v_p(a)+v_p(a^{p-1}-1)=1.
\]
Since \(p\) is odd,
\[
(-a)^p-(-a)=-(a^p-a),
\]
hence also
\[
v_p((-a)^p-(-a))=1.
\]

Thus Theorem~\ref{thm:NDH-monogenic} gives
\[
\cO_{K_a}=\Z[\theta],\qquad \theta^n=a,
\]
and
\[
\cO_{K_{-a}}=\Z[\vartheta],\qquad \vartheta^n=-a.
\]

Because \(a\) is squarefree, the strong decomposition of \(a\) has
\[
a_1=a,\qquad a_j=1\quad (2\le j\le n-1),
\]
and the strong decomposition of \(-a\) has the same positive part. Hence
\[
C_m(a)=C_m(-a)=1
\qquad (1\le m\le n-1).
\]
Therefore the normalized monomials in the two fields are simply
\[
e_m(a)=\theta^m,
\qquad
e_m(-a)=\vartheta^m
\qquad (1\le m\le n-1).
\]

Since \(\Tr(\theta^m)=\Tr(\vartheta^m)=0\) for \(1\le m\le n-1\), Lemma~\ref{lem:exact-sequence}
shows that
\[
(n\theta,\dots,n\theta^{n-1})
\]
is a \(\Z\)-basis of \(\cO_{K_a}^{\perp}\), and
\[
(n\vartheta,\dots,n\vartheta^{n-1})
\]
is a \(\Z\)-basis of \(\cO_{K_{-a}}^{\perp}\).

By Proposition~\ref{prop:orthogonality}, for \(1\le i,j\le n-1\),
\[
\bigl\langle J(\theta^i),J(\theta^j)\bigr\rangle_\infty
=
\begin{cases}
0,& i\neq j,\\[2mm]
n\,a^{2i/n},& i=j,
\end{cases}
\]
and exactly the same formula holds with \(\vartheta\) in place of \(\theta\), because only \(|a|\)
enters. Hence the Gram matrices of the above trace-zero bases are identical:
\[
\mathrm{Gram}\bigl(J(n\theta),\dots,J(n\theta^{n-1})\bigr)
=
\mathrm{Gram}\bigl(J(n\vartheta),\dots,J(n\vartheta^{n-1})\bigr)
=
n^3\diag\!\bigl(a^{2/n},a^{4/n},\dots,a^{2(n-1)/n}\bigr).
\]
Therefore
\[
\sh(K_a)=\sh(K_{-a}).
\]

Finally, since \(n=2r\) is even and \(a>0\), the polynomial \(x^n-a\) has exactly two real roots,
whereas \(x^n+a\) has no real roots. Thus
\[
\operatorname{sig}(K_a)=(2,r-1),
\qquad
\operatorname{sig}(K_{-a})=(0,r),
\]
so \(K_a\not\simeq K_{-a}\).
Since there are infinitely many such primes \(q\), the theorem follows.
\end{proof}

To recover rigidity on the squarefree subfamily, we use a standard torsion
theorem for radical extensions.  In the real pure-field situation needed below,
it gives the following precise replacement for the radical-elements statement.

\begin{lemma}[Radical torsion in a real pure field]
\label{lem:pure-generators}
Let \(r\ge 2\), let \(c\in \Q_{>0}\), and let \(\lambda\) be the positive real
root of \(X^r-c\).  Assume
\[
[\Q(\lambda):\Q]=r.
\]
Put
\[
F:=\Q(\lambda)\subset \R.
\]
If \(x\in F^\times\) satisfies
\[
x^r\in \Q^\times,
\]
then
\[
x=q\,\lambda^u
\]
for some \(q\in \Q^\times\) and some integer \(u\) with \(0\le u\le r-1\).
If moreover \(\Q(x)=F\), then
\[
\gcd(u,r)=1.
\]
\end{lemma}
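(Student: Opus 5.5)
The plan is to compare the group of ``radical classes'' of \(F\) with the cyclic group generated by \(\lambda\), using a linear-independence theorem for real radicals to bound its size. Put
\[
T:=\{x\in F^\times : x^r\in \Q^\times\}\big/\Q^\times .
\]
This is a group whose every element has order dividing \(r\). It contains the classes of \(1,\lambda,\dots,\lambda^{r-1}\), and these classes are pairwise distinct. Indeed, if \(\lambda^{u}/\lambda^{u'}=\lambda^{u-u'}\in\Q\) with \(0<|u-u'|<r\), then \(\lambda\) would satisfy a polynomial \(X^{|u-u'|}-q\) of degree less than \(r\). That contradicts \([\Q(\lambda):\Q]=r\). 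So \(\langle\lambda\rangle\Q^\times/\Q^\times\subseteq T\) has exactly \(r\) elements. The first assertion of Lemma~\ref{lem:pure-generators} is then equivalent to the bound \(|T|\le r\).

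To get \(|T|\le r=\dim_\Q F\), I would show that representatives of distinct classes in \(T\) are \(\Q\)-linearly independent. Since \(F\subset\R\), each class has a positive real representative: replace \(x\) by \(-x\) if necessary, noting that \(x^r\in\Q\) is preserved up to sign. Two positive representatives \(x,y\) lie in distinct classes exactly when \(x/y\notin\Q\). For positive real numbers whose \(r\)th powers are rational, I would invoke the classical Besicovitch--Mordell theorem on real radicals: finitely many such numbers that are pairwise non-proportional over \(\Q\) are linearly independent over \(\Q\). This is the ``standard torsion theorem'' referred to before the lemma. It is the step I expect to be the main obstacle, and I would cite it rather than reprove it.

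If a self-contained argument is wanted, I would argue by induction on the number of terms in a minimal dependence relation \(\sum c_i x_i=0\). After dividing by \(x_1\), one may assume \(x_1=1\). One then works in the Galois closure of \(\Q(x_1,\dots,x_k)\) and applies an automorphism that multiplies some \(x_j\) by a nontrivial root of unity. This produces a shorter relation, which contradicts minimality. Reality enters because the only roots of unity in \(\R\) are \(\pm1\), which prevents the unwanted cancellations.

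Granting the independence, any \(r+1\) distinct classes in \(T\) would give \(r+1\) linearly independent elements of the \(r\)-dimensional space \(F\), which is impossible. Hence \(T=\langle\lambda\rangle\Q^\times/\Q^\times\), so every \(x\in F^\times\) with \(x^r\in\Q^\times\) can be written as \(x=q\lambda^u\) with \(q\in\Q^\times\) and \(0\le u\le r-1\).

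For the final clause, put \(g=\gcd(u,r)\). Then \(x=q\lambda^u\in\Q(\lambda^g)\), and \(\lambda^g\) is a root of \(X^{r/g}-c\), so \([\Q(\lambda^g):\Q]\le r/g\). If \(\Q(x)=F\), then \(r=[\Q(x):\Q]\le r/g\), which forces \(g=1\), that is, \(\gcd(u,r)=1\).
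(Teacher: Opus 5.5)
Your proof is correct, but it reaches the first assertion by a different route from the paper.

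\textbf{The paper's route.} The paper applies the Gay--V\'elez torsion theorem directly. Since $F\subset\R$ contains no primitive $2p$-th root of unity outside $\Q$, that theorem says the full torsion subgroup of $F^\times/\Q^\times$ equals $\langle\Q^\times,\lambda\rangle/\Q^\times$. Hence the class of $x$ is a power of the class of $\lambda$.

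\textbf{Your route.} You bound only the subgroup $T$ of classes killed by $r$, using a dimension count. The Besicovitch--Mordell--Siegel theorem makes positive representatives of distinct classes $\Q$-linearly independent, so $|T|\le\dim_\Q F=r$. The $r$ distinct classes of $1,\lambda,\dots,\lambda^{r-1}$ therefore exhaust $T$. (Incidentally, the ``standard torsion theorem'' the paper invokes is Gay--V\'elez, not Besicovitch--Mordell.)

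\textbf{Comparison.} Both routes outsource the real content to a classical theorem in which reality of the field is essential. In yours, the external input is the better-known linear-independence statement, and the reduction to the lemma is a transparent counting argument that reuses $[F:\Q]=r$. The paper's citation identifies the entire torsion group of $F^\times/\Q^\times$, not just its $r$-torsion, with no counting. The final clause $\gcd(u,r)=1$ is handled exactly as in the paper.

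\textbf{A caveat on your self-contained sketch.} The sketch of the independence theorem does not work as written. Applying $\sigma$ with $\sigma(x_i)=\zeta_i x_i$ and subtracting gives $\sum_{i\ge 2}c_i(\zeta_i-1)x_i=0$. This is a shorter relation, but its coefficients lie in a cyclotomic field, so it does not contradict minimality over $\Q$. Independence over $\Q(\zeta_N)$ genuinely fails for real radicals: for example, $1$ and $\sqrt2$ are dependent over $\Q(\zeta_8)$, since $\sqrt2=\zeta_8+\zeta_8^{-1}$. The actual proofs need an extra idea to control this descent, so you should cite the theorem, as you propose, rather than rely on that sketch.
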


\begin{proof}
Write \(T(F^\times/\Q^\times)\) for the torsion subgroup of
\(F^\times/\Q^\times\).  The coset \(\lambda\Q^\times\) has exact order \(r\):
indeed, if \(\lambda^d\in\Q^\times\) for some \(1\le d<r\), then \(\lambda\)
satisfies a polynomial of degree \(d\) over \(\Q\), contradicting
\([\Q(\lambda):\Q]=r\).

Let
\[
M:=\langle \Q^\times,\lambda\rangle\subset F^\times.
\]
Then
\[
M/\Q^\times=\langle \lambda\Q^\times\rangle
\]
is cyclic of order \(r\), and \(\Q(M)=F\).  Since \(F\subset \R\), no primitive
\(2p\)-th root of unity lies in \(F\setminus \Q\), for any prime \(p\).  Hence
\cite[Theorem~1.7]{GayVelez1981}, applied with base field \(\Q\), gives
\[
T(F^\times/\Q^\times)=M/\Q^\times.
\]

Now \(x^r\in\Q^\times\), so the coset \(x\Q^\times\) is torsion in
\(F^\times/\Q^\times\).  Therefore
\[
x\Q^\times=(\lambda\Q^\times)^u
\]
for some \(u\in\Z\).  Reducing \(u\) modulo \(r\), we may assume
\(0\le u\le r-1\).  Thus
\[
x=q\,\lambda^u
\]
for some \(q\in\Q^\times\).

Finally assume \(\Q(x)=F\), and let \(d:=\gcd(u,r)\).  If \(d>1\), then
\[
x\in \Q(\lambda^d).
\]
But \(\lambda^d\) satisfies
\[
X^{r/d}-c\in \Q[X],
\]
so
\[
[\Q(\lambda^d):\Q]\le r/d<r=[F:\Q].
\]
Therefore \(\Q(x)\subseteq \Q(\lambda^d)\neq F\), a contradiction.  Hence
\(d=1\).
\end{proof}

Among squarefree radicands, the core field remembers more than just a general radical extension: it remembers the radicand itself. The next lemma isolates that uniqueness statement.

\begin{lemma}[Squarefree core uniqueness]
\label{lem:squarefree-core-uniqueness}
Let \(r\ge 2\), and let \(u,v>1\) be squarefree integers such that
\[
X^r-u
\qquad\text{and}\qquad
X^r-v
\]
are irreducible over \(\Q\).  If
\[
\Q(u^{1/r})=\Q(v^{1/r}),
\]
where \(u^{1/r}\) and \(v^{1/r}\) denote the positive real roots, then
\[
u=v.
\]
\end{lemma}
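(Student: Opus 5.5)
Set \(\lambda:=u^{1/r}\), \(\mu:=v^{1/r}\) (positive real roots) and \(F:=\Q(\lambda)=\Q(\mu)\subset\R\). The plan is to apply Lemma~\ref{lem:pure-generators} to \(x=\mu\) and then compare prime valuations.

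\textbf{Step 1: express \(\mu\) in terms of \(\lambda\).} Irreducibility of \(X^r-u\) gives \([F:\Q]=r\). Since \(\mu\in F^\times\), \(\mu^r=v\in\Q^\times\), and \(\Q(\mu)=F\), Lemma~\ref{lem:pure-generators} (with \(c=u\)) yields
\[
\mu=q\,\lambda^{k},\qquad q\in\Q^\times,\quad 0\le k\le r-1,\quad \gcd(k,r)=1.
\]
Because \(\mu,\lambda>0\), we have \(q>0\). Raising to the \(r\)th power gives
\[
v=q^r u^k.
\]

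\textbf{Step 2: compare valuations at each prime.} Fix a prime \(\ell\). Then
\[
v_\ell(v)=r\,v_\ell(q)+k\,v_\ell(u).
\]
Since \(u\) and \(v\) are squarefree, \(v_\ell(u),v_\ell(v)\in\{0,1\}\), and \(v_\ell(q)\in\Z\).

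\begin{itemize}
\item If \(\ell\mid u\): then \(v_\ell(v)\equiv k\pmod r\). Since \(\gcd(k,r)=1\) and \(r\ge2\), we have \(k\not\equiv0\pmod r\), so \(v_\ell(v)\ne0\). Hence \(v_\ell(v)=1\) and \(k\equiv1\pmod r\). Because \(0\le k\le r-1\), this forces \(k=1\).
\item If \(\ell\nmid u\): then \(v_\ell(v)\equiv0\pmod r\) with \(v_\ell(v)\in\{0,1\}\) and \(r\ge2\), so \(v_\ell(v)=0\).
\end{itemize}

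Since \(u>1\), some prime divides \(u\), so the first case occurs and \(k=1\).

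\textbf{Step 3: conclude.} With \(k=1\), the first case gives \(v_\ell(v)=1=v_\ell(u)\) for \(\ell\mid u\), and the second gives \(v_\ell(v)=0=v_\ell(u)\) for \(\ell\nmid u\). Hence \(v\) and \(u\) have identical prime factorizations. Both are positive, so \(u=v\).

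The only substantive input is Lemma~\ref{lem:pure-generators}, which is already available. The only point needing care is forcing \(k=1\): this uses the existence of a prime divisor of \(u\), guaranteed by the hypothesis \(u>1\), together with \(\gcd(k,r)=1\) so that \(k\not\equiv 0 \pmod r\).
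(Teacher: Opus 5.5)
Your proof is correct and takes essentially the same route as the paper: apply Lemma~\ref{lem:pure-generators} to get \(\mu=q\lambda^k\) with \(\gcd(k,r)=1\), then use squarefreeness at a prime divisor of \(u\) (which exists since \(u>1\)) to force \(k=1\). The only difference is cosmetic: you finish by comparing \(v_\ell(u)\) and \(v_\ell(v)\) directly at every prime, whereas the paper first shows \(\operatorname{Supp}(u)\subseteq\operatorname{Supp}(v)\) and then deduces \(q=1\).
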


\begin{proof}
Let
\[
\lambda:=u^{1/r},
\qquad
\mu:=v^{1/r},
\qquad
F:=\Q(\lambda)=\Q(\mu).
\]
Since \(X^r-v\) is irreducible, \([\Q(\mu):\Q]=r\), and hence
\(\Q(\mu)=F\).  Applying Lemma~\ref{lem:pure-generators} to the real pure field
\(F=\Q(\lambda)\), we obtain
\[
\mu=q\,\lambda^k
\]
for some \(q\in\Q^\times\) and some integer \(k\) satisfying
\[
1\le k\le r-1,
\qquad
\gcd(k,r)=1.
\]
Since \(\lambda>0\) and \(\mu>0\), one has \(q>0\).  Raising to the \(r\)-th
power gives
\[
v=q^r u^k.
\]

Let \(\ell\) be a prime divisor of \(u\).  Since \(u\) is squarefree,
\[
v_\ell(v)=k+r\,v_\ell(q).
\]
Because \(v\) is squarefree, \(v_\ell(v)\in\{0,1\}\).  Since
\(1\le k\le r-1\), the only possibility is
\[
k=1,
\qquad
v_\ell(q)=0.
\]
Thus every prime divisor of \(u\) divides \(v\) with valuation \(1\), and
\[
\operatorname{Supp}(u)\subseteq \operatorname{Supp}(v).
\]

Now let \(\ell\) be a prime divisor of the numerator or denominator of \(q\).
If \(\ell\nmid u\), then
\[
v_\ell(v)=r\,v_\ell(q),
\]
which is either negative or has absolute value at least \(r\), impossible
because \(v\) is a positive squarefree integer.  If \(\ell\mid u\), then we
already proved \(v_\ell(q)=0\).  Hence \(v_\ell(q)=0\) for every prime
\(\ell\), so \(q=1\).  Therefore
\[
v=u.
\]
\end{proof}

This yields the clean even-degree classification promised in the introduction: on squarefree admissible parameters, shape determines the field up to the unavoidable sign ambiguity.

\begin{theorem}
\label{thm:shape-complete-squarefree-up-to-sign}
Assume \(n=2r\ge 4\) is even. Let \(a,b\) be squarefree admissible integers.
If
\[
\sh(K_a)=\sh(K_b),
\]
then
\[
|a|=|b|.
\]
Equivalently,
\[
K_b\simeq K_a
\qquad\text{or}\qquad
K_b\simeq K_{-a}.
\]
\end{theorem}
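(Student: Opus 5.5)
The plan is to combine the core-field consequence \eqref{eq:even-core-field-consequence} (Corollary~\ref{cor:even-core}) with the squarefree uniqueness Lemma~\ref{lem:squarefree-core-uniqueness}. From \(\sh(K_a)=\sh(K_b)\), Corollary~\ref{cor:even-core} gives
\[
\Q\bigl(|a|^{1/r}\bigr)=\Q\bigl(|b|^{1/r}\bigr),
\]
with positive real roots. Then I would apply Lemma~\ref{lem:squarefree-core-uniqueness} with \(u=|a|\), \(v=|b|\). Since \(a,b\) are squarefree and admissible, \(a\neq\pm1\), so \(|a|,|b|>1\) and both are squarefree.

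The only hypothesis needing real work is irreducibility of \(X^r-|a|\) and \(X^r-|b|\). I would check it via Capelli's criterion: \(X^r-c\) with \(c\in\Q_{>0}\) is reducible iff \(c=d^\ell\) for some prime \(\ell\mid r\), or \(4\mid r\) and \(c=-4d^4\). The second case is impossible for \(c>0\). The first is impossible because a squarefree \(c>1\) is not a perfect \(\ell\)-th power for any \(\ell\ge 2\). Alternatively, I can use Eisenstein at any prime \(\ell\mid |a|\), which exists since \(|a|>1\), and \(v_\ell(|a|)=1\) because \(a\) is squarefree. The Eisenstein route is self-contained and avoids citing Capelli, so I prefer it. The lemma then yields \(|a|=|b|\).

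For the equivalent formulation, \(|a|=|b|\) means \(b=a\) or \(b=-a\). In either case \(K_b=K_a\) or \(K_b=K_{-a}\) literally as presentations, hence up to isomorphism. Here \(-a\) is admissible whenever \(b=-a\) occurs, being \(b\) itself. Conversely, if \(K_b\simeq K_{\pm a}\), then equality of absolute values is what was proved, so the two formulations agree in the direction asserted.

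I expect no serious obstacle, since all the substance sits in Lemma~\ref{lem:pure-generators} and Lemma~\ref{lem:squarefree-core-uniqueness}. The one point requiring care is the irreducibility check for degree \(r\) rather than \(n\). Admissibility only gives irreducibility of \(x^n\mp|a|\). Irreducibility of \(x^n-a\) does not automatically give it for \(X^r-|a|\) when \(a<0\), which is why I use the squarefree Eisenstein argument instead of deducing it from admissibility.
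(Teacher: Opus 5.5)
Your proposal is correct and follows the paper's proof essentially verbatim. The core-field consequence \eqref{eq:even-core-field-consequence} gives $\Q(|a|^{1/r})=\Q(|b|^{1/r})$; Eisenstein at a prime dividing the squarefree integers $|a|,|b|>1$ gives irreducibility of $X^r-|a|$ and $X^r-|b|$; and Lemma~\ref{lem:squarefree-core-uniqueness} then yields $|a|=|b|$, hence $b=\pm a$. You are also right that admissibility alone would not give irreducibility of $X^r-|a|$ when $a<0$, which is why the Eisenstein check is the right route.
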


\begin{proof}
By the even-degree core-field consequence \eqref{eq:even-core-field-consequence},
\[
\sh(K_a)=\sh(K_b)
\quad\Longrightarrow\quad
\Q\!\bigl(|a|^{1/r}\bigr)=\Q\!\bigl(|b|^{1/r}\bigr).
\]
Since \(|a|\) and \(|b|\) are squarefree integers \(>1\), the polynomials
\[
x^r-|a|
\qquad\text{and}\qquad
x^r-|b|
\]
are irreducible over \(\Q\) by Eisenstein's criterion.
Therefore Lemma~\ref{lem:squarefree-core-uniqueness} applies and gives
\[
|a|=|b|.
\]
Hence \(b=\pm a\).
If \(b=a\), then \(K_b=K_a\).
If \(b=-a\), then \(K_b=K_{-a}\).
\end{proof}

\begin{remark}
Combining Theorems~\ref{thm:even-Sh-ambiguity}
and~\ref{thm:shape-complete-squarefree-up-to-sign}, we obtain infinitely many squarefree admissible
parameters \(a>0\) for which the fiber of the shape map inside the squarefree admissible family is exactly
\[
\{\,K_a,\ K_{-a}\,\}.
\]
Thus, on the squarefree admissible subfamily in even degree, the Minkowski shape has exactly the residual
ambiguity predicted by sign.
\end{remark}

Once the only remaining ambiguity is sign, the signature separates the two possibilities immediately. Thus adding signature restores full completeness in even degree.

\begin{corollary}
\label{cor:sig-sh-complete-squarefree}
Assume \(n=2r\ge 4\) is even. Let \(a,b\) be squarefree admissible integers.
If
\[
\operatorname{sig}(K_a)=\operatorname{sig}(K_b)
\qquad\text{and}\qquad
\operatorname{sh}(K_a)=\operatorname{sh}(K_b),
\]
then
\[
K_a\simeq K_b.
\]
\end{corollary}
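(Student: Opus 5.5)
The plan is to reduce the corollary directly to Theorem~\ref{thm:shape-complete-squarefree-up-to-sign} and then use the signature to eliminate the sign ambiguity.

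Since \(a,b\) are squarefree admissible and \(\sh(K_a)=\sh(K_b)\), that theorem gives \(|a|=|b|\), so \(b=a\) or \(b=-a\). If \(b=a\) there is nothing to prove, so the only work is the case \(b=-a\).

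In that case I will compute the signatures explicitly, exactly as in the last step of the proof of Theorem~\ref{thm:even-Sh-ambiguity}. Write \(c:=|a|>1\); then \(\{K_a,K_b\}=\{K_c,K_{-c}\}\).
\begin{itemize}
\item Since \(n=2r\) is even and \(c>0\), the polynomial \(x^n-c\) has exactly two real roots \(\pm c^{1/n}\). Hence \(\operatorname{sig}(K_c)=(2,r-1)\).
\item The polynomial \(x^n+c\) has no real roots, since \(x^n\ge 0\) for real \(x\). Hence \(\operatorname{sig}(K_{-c})=(0,r)\).
\end{itemize}
These signatures are distinct, so \(b=-a\) would give \(\operatorname{sig}(K_a)\ne\operatorname{sig}(K_b)\), contradicting the hypothesis. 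Therefore \(b=a\), and in particular \(K_a\simeq K_b\).

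This uses only the irreducibility of \(x^n\mp c\), which is part of admissibility, so that the real roots of \(x^n\mp c\) are in bijection with the real embeddings. There is no real obstacle: the substance lies in Theorem~\ref{thm:shape-complete-squarefree-up-to-sign}, and the remaining step is a root count.
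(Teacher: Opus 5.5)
Your proof is correct and matches the paper's argument essentially verbatim. In both, Theorem~\ref{thm:shape-complete-squarefree-up-to-sign} gives \(|a|=|b|\), and the signature computation \(\operatorname{sig}(K_c)=(2,r-1)\) versus \(\operatorname{sig}(K_{-c})=(0,r)\) rules out \(b=-a\).
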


\begin{proof}
By Theorem~\ref{thm:shape-complete-squarefree-up-to-sign},
\[
\operatorname{sh}(K_a)=\operatorname{sh}(K_b)
\quad\Longrightarrow\quad
|a|=|b|.
\]
Put \(c:=|a|=|b|\). Then \(\{K_a,K_b\}=\{K_c,K_{-c}\}\).
Since \(n=2r\) is even and \(c>0\), the polynomial \(x^n-c\) has exactly two
real roots, whereas \(x^n+c\) has no real roots. Hence
\[
\operatorname{sig}(K_c)=(2,r-1),
\qquad
\operatorname{sig}(K_{-c})=(0,r).
\]
Therefore equal signatures force \(K_a\simeq K_b\).
\end{proof}

\section{Discriminant and divisor products}\label{sec:disc-divisor}

In this section, let $a$ be admissible and write its strong decomposition as
\begin{equation}\label{eq:strong-decomp-disc}
a=\varepsilon\prod_{j=1}^{n-1} a_j^{\,j},
\qquad
\varepsilon\in\{\pm1\},
\qquad
a_j\in \Z_{>0}\text{ squarefree and pairwise coprime}.
\end{equation}
We continue to write
\[
C_m(a)=\prod_{j=1}^{n-1} a_j^{\lfloor jm/n\rfloor}
\qquad (0\le m\le n-1)
\]
as in \eqref{eq:Cm-def}. The point of the section is that the same data governing shape also control the discriminant, but in a different combination. What emerges is a divisor-lattice expansion in which the archimedean parameters and the finite periodic factors play visibly different roles.

\subsection{Discriminant identities}

To turn the strong decomposition into an explicit discriminant formula, we need to understand the total contribution of the floor terms appearing in the normalization factors \(C_m(a)\). The next lemma is the exact combinatorial identity that drives the calculation.

\begin{lemma}[A floor-sum identity]\label{lem:floor-sum-exact}
Let $1\le j\le n-1$ and put $g=\gcd(j,n)$.
Then
\[
\sum_{m=1}^{n-1}\Bigl\lfloor\frac{jm}{n}\Bigr\rfloor
=\frac{(j-1)(n-1)+(g-1)}{2}.
\]
\end{lemma}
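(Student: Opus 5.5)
The plan is to write the floor as the integer part of a quotient and sum remainders. For each \(1\le m\le n-1\), write
\[
jm=n\Bigl\lfloor\frac{jm}{n}\Bigr\rfloor+\rho(m),\qquad 0\le \rho(m)\le n-1,
\]
where \(\rho(m)\) is the least nonnegative residue of \(jm\) modulo \(n\). Summing over \(m\) gives
\[
n\sum_{m=1}^{n-1}\Bigl\lfloor\frac{jm}{n}\Bigr\rfloor
= j\cdot\frac{n(n-1)}{2}-\sum_{m=1}^{n-1}\rho(m).
\]
So everything reduces to computing \(R:=\sum_{m=1}^{n-1}\rho(m)\) in terms of \(g=\gcd(j,n)\).

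To compute \(R\), I would use the structure of multiplication by \(j\) on \(\Z/n\Z\). Put \(n'=n/g\) and \(j'=j/g\), so that \(\gcd(j',n')=1\). The map \(m\mapsto jm \bmod n\) has image the subgroup \(g\Z/n\Z=\{0,g,2g,\dots,(n'-1)g\}\), and each value is attained exactly \(g\) times as \(m\) runs over a full residue system \(0,\dots,n-1\). Including \(m=0\) costs nothing, since \(\rho(0)=0\). Hence
\[
R=g\sum_{t=0}^{n'-1}tg=g^2\frac{n'(n'-1)}{2}=\frac{n(n-g)}{2}.
\]
Substituting back gives
\[
\sum\Bigl\lfloor\frac{jm}{n}\Bigr\rfloor=\frac{j(n-1)-(n-g)}{2}.
\]
Expanding the right-hand side of the claim, \((j-1)(n-1)+(g-1)=j(n-1)-n+g\), which matches.

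The only step requiring care is the fiber count, namely that each multiple of \(g\) is hit exactly \(g\) times. This follows because the kernel of multiplication by \(j\) on \(\Z/n\Z\) is \(\{m : n'\mid m\}\), which has \(g\) elements. Everything else is routine arithmetic.
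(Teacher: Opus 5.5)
Your proof is correct, but it handles the non-coprime case by a different route than the paper. Both arguments use $\lfloor x/n\rfloor=(x-(x\bmod n))/n$ and then compute a sum of residues.

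The paper first writes $j=gj_0$, $n=gn_0$ and splits the index as $m=qn_0+r$ (or $m=qn_0$), so that $\lfloor jm/n\rfloor=qj_0+\lfloor j_0r/n_0\rfloor$. This separates out an explicit arithmetic-progression term $n_0j_0\sum_{q=1}^{g-1}q$. What remains is the coprime sum $\sum_{r=1}^{n_0-1}\lfloor j_0r/n_0\rfloor=(j_0-1)(n_0-1)/2$, where the nonzero residues are simply permuted.

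You instead apply the residue identity to all $m$ at once and absorb the gcd into the residue sum. You use that $m\mapsto jm$ on $\Z/n\Z$ has image $g\Z/n\Z$ and fibers of size $g$, which gives $R=n(n-g)/2$ in one step.

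Your version is shorter and shows where the correction term $(g-1)/2$ comes from: the residues collapse onto multiples of $g$. The paper's version stays at the level of index bookkeeping and only needs the permutation fact in the coprime case.

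One detail you leave implicit is the image claim, which follows from your kernel count: the image lies in $g\Z/n\Z$ because $g\mid j$, and both sets have $n/g$ elements.
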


\begin{proof}
Write
\[
j=gj_0,
\qquad
n=gn_0,
\qquad
\gcd(j_0,n_0)=1.
\]
For $1\le m\le n-1$, write either $m=qn_0$ with $1\le q\le g-1$, or
\[
m=qn_0+r
\qquad
(0\le q\le g-1,\,1\le r\le n_0-1).
\]
Hence
\begin{align*}
\sum_{m=1}^{n-1}\Bigl\lfloor\frac{jm}{n}\Bigr\rfloor
&=\sum_{q=0}^{g-1}\sum_{r=1}^{n_0-1}\Bigl\lfloor\frac{j(qn_0+r)}{n}\Bigr\rfloor
   +\sum_{q=1}^{g-1}\Bigl\lfloor\frac{jqn_0}{n}\Bigr\rfloor \\
&=\sum_{q=0}^{g-1}\sum_{r=1}^{n_0-1}\left(qj_0+\Bigl\lfloor\frac{j_0r}{n_0}\Bigr\rfloor\right)
   +\sum_{q=1}^{g-1} qj_0 \\
&=g\sum_{r=1}^{n_0-1}\Bigl\lfloor\frac{j_0r}{n_0}\Bigr\rfloor
  +n_0j_0\sum_{q=1}^{g-1} q.
\end{align*}
Because $\gcd(j_0,n_0)=1$, the residues $j_0,2j_0,\dots,(n_0-1)j_0$ modulo $n_0$ are a permutation of
$1,2,\dots,n_0-1$.
Therefore
\begin{align*}
\sum_{r=1}^{n_0-1}\Bigl\lfloor\frac{j_0r}{n_0}\Bigr\rfloor
&=\sum_{r=1}^{n_0-1}\frac{j_0r-(j_0r\bmod n_0)}{n_0} \\
&=\frac{j_0\sum_{r=1}^{n_0-1} r-\sum_{s=1}^{n_0-1} s}{n_0}
=\frac{(j_0-1)(n_0-1)}{2}.
\end{align*}
Substituting this and
\[
\sum_{q=1}^{g-1}q=\frac{g(g-1)}{2}
\]
into the previous formula gives
\begin{align*}
\sum_{m=1}^{n-1}\Bigl\lfloor\frac{jm}{n}\Bigr\rfloor
&=\frac{g(j_0-1)(n_0-1)}{2}+\frac{n_0j_0g(g-1)}{2} \\
&=\frac{jn-j-n+g}{2}
=\frac{(j-1)(n-1)+(g-1)}{2}.
\end{align*}
\end{proof}

We also record the standard change-of-basis rule for discriminants, since it will let us move cleanly from the power basis to the normalized integral basis.

\begin{lemma}[Discriminant under basis change]\label{lem:disc-basis-change}
Let $K/\Q$ be a number field of degree $n$.
If $v=(v_0,\dots,v_{n-1})$ is a $\Q$-basis of $K$ and $v'=vT$ with $T\in \GL_n(\Q)$, then
\[
\disc(v')=\disc(v)\,(\det T)^2.
\]
\end{lemma}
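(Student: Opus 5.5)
The statement is the standard transformation rule for discriminants of $\Q$-bases, and I would prove it directly from the definition of the discriminant as the squared determinant of the embedding matrix, which the paper already uses in the proof of Lemma~\ref{lem:gram-absdisc}. Let $\sigma_1,\dots,\sigma_n$ be the $n$ embeddings $K\hookrightarrow\C$. Form the embedding matrices
\[
M=(\sigma_i(v_{j}))_{i,j}
\qquad\text{and}\qquad
M'=(\sigma_i(v'_{j}))_{i,j},
\]
so that $\disc(v)=\det(M)^2$ and $\disc(v')=\det(M')^2$.

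The first step is to express $M'$ in terms of $M$. By the right-action convention fixed earlier, $v'=vT$ means $v'_j=\sum_k v_k T_{kj}$. Since each $\sigma_i$ is $\Q$-linear and $T$ has rational entries, applying $\sigma_i$ gives
\[
\sigma_i(v'_j)=\sum_k \sigma_i(v_k)\,T_{kj},
\]
which in matrix form is exactly $M'=MT$.

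Multiplicativity of the determinant then yields $\det M'=\det M\cdot\det T$. Squaring gives
\[
\disc(v')=\disc(v)\,(\det T)^2,
\]
as claimed.

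The only point requiring care is that the indexing of $T$ matches the column convention of \S\ref{conv:right-action}, so that the product is $MT$ and not $MT^{\mathsf T}$. Even if the convention were transposed, the result would be unaffected, because $\det T^{\mathsf T}=\det T$.

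An equivalent route uses the trace-form description $\disc(v)=\det(\Tr(v_iv_j))$. Under the change of basis, the trace form transforms by congruence,
\[
\bigl(\Tr(v'_iv'_j)\bigr)=T^{\mathsf T}\bigl(\Tr(v_iv_j)\bigr)T,
\]
exactly as in Lemma~\ref{lem:gram-congruence}. Taking determinants again gives the factor $(\det T)^2$.

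There is no real obstacle in either argument.
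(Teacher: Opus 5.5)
Your proof is correct. The trace-form argument you give at the end is exactly the paper's proof: it sets $G(v)=(\Tr(v_iv_j))$, observes $G(v')=T^{\mathsf T}G(v)T$, and takes determinants.

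Your main argument instead works with the embedding matrix and shows $M'=MT$. Since $G(v)=M^{\mathsf T}M$, this is a ``square root'' of the paper's identity. It buys the slightly finer statement $\det M'=\det M\cdot\det T$ before squaring. The paper's version avoids complex embeddings altogether and reuses the congruence rule of Lemma~\ref{lem:gram-congruence}, which the paper applies throughout. Either argument is complete.
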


\begin{proof}
Let $G(v)=(\Tr(v_iv_j))_{0\le i,j\le n-1}$ be the Gram matrix of the trace pairing in the basis $v$.
Then
\[
G(v')=T^{\mathsf T}G(v)T,
\]
so
\[
\disc(v')=\det G(v')=(\det T)^2\det G(v)=(\det T)^2\disc(v).
\]
\end{proof}

Assume now that Hypothesis~\textup{(H)} holds, so that the normalized integral basis
\begin{equation}\label{eq:ND-basis-disc}
\mathcal B_a=
\Bigl(1,\ \omega_m:=\frac{\theta^m+\beta_m}{C_m(a)D_m(a)}\ (1\le m\le n-1)\Bigr)
\end{equation}
of Theorem~\ref{thm:ND-basis} exists. Applying the previous lemma to the integral basis isolates the exact contribution of the normalization factors \(C_m(a)\) and \(D_m(a)\). This is the algebraic starting point for all later discriminant formulas.

\begin{proposition}[Discriminant decomposition in the integral basis]
\label{prop:disc-ND-basis}
Assume Hypothesis~\textup{(H)}.
Then
\[
\disc(K_a)=\disc(1,\theta,\dots,\theta^{n-1})
\cdot
\left(\prod_{m=1}^{n-1}\frac{1}{C_m(a)D_m(a)}\right)^2.
\]
\end{proposition}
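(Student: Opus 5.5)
The proof is a direct application of Lemma~\ref{lem:disc-basis-change} to the power basis \(v=(1,\theta,\dots,\theta^{n-1})\) and the normalized integral basis \(\mathcal B_a\) of Theorem~\ref{thm:ND-basis}. First we observe that \(v\) is a \(\Q\)-basis of \(K_a\), because \(x^n-a\) is irreducible. Hence there is a unique \(T\in\GL_n(\Q)\) with \(\mathcal B_a=vT\).

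Next we determine \(T\). Each \(\beta_m\) is a \(\Z\)-linear combination of \(1,\theta,\dots,\theta^{m-1}\), so
\[
\omega_m=\frac{1}{C_m(a)D_m(a)}\,\theta^m+\sum_{r=0}^{m-1}\frac{b_{r,m}}{C_m(a)D_m(a)}\,\theta^r .
\]
The \(m\)th column of \(T\), indexed from \(0\), therefore has entries only in rows \(0,\dots,m\). Its diagonal entry is \(1/(C_m(a)D_m(a))\) for \(m\ge 1\), and the entry for the basis vector \(1\) is \(1\). Thus \(T\) is upper triangular, and
\[
\det T=\prod_{m=1}^{n-1}\frac{1}{C_m(a)D_m(a)}.
\]
This is the same triangularity argument used in Lemma~\ref{lem:monomial-max-away-n}, except that we start from the power basis rather than from \((1,e_1,\dots,e_{n-1})\). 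Equivalently, one can compose that lemma's transition matrix with the diagonal matrix \(\diag(1,C_1(a)^{-1},\dots,C_{n-1}(a)^{-1})\) relating \(v\) to \((1,e_1,\dots,e_{n-1})\).

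Finally, \(\mathcal B_a\) is a \(\Z\)-basis of \(\cO_{K_a}\) by Theorem~\ref{thm:ND-basis}, so \(\disc(K_a)=\disc(\mathcal B_a)\). Lemma~\ref{lem:disc-basis-change} then gives \(\disc(\mathcal B_a)=\disc(v)(\det T)^2\), which is the claimed formula. There is no real obstacle here. The only point needing care is the ordering and column convention, so that \(T\) is upper triangular with the stated diagonal. The sign of the discriminant is harmless, since \((\det T)^2>0\).
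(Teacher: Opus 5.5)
Your proof is correct and follows the same route as the paper. You express $\mathcal B_a$ in the power basis, note that the transition matrix is upper triangular with diagonal $1,(C_1(a)D_1(a))^{-1},\dots,(C_{n-1}(a)D_{n-1}(a))^{-1}$, and apply Lemma~\ref{lem:disc-basis-change} together with the fact that $\mathcal B_a$ is an integral basis of $\cO_{K_a}$.
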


\begin{proof}
Express the ordered basis $\mathcal B_a$ of \eqref{eq:ND-basis-disc} in the power basis
$(1,\theta,\dots,\theta^{n-1})$.
Since each $\beta_m$ is a $\Z$-linear combination of $1,\theta,\dots,\theta^{m-1}$, the change-of-basis matrix
is upper triangular with diagonal entries
\[
1,\ (C_1(a)D_1(a))^{-1},\dots,(C_{n-1}(a)D_{n-1}(a))^{-1}.
\]
Hence its determinant equals
\[
\prod_{m=1}^{n-1}(C_m(a)D_m(a))^{-1}.
\]
Applying Lemma~\ref{lem:disc-basis-change} and using that $\mathcal B_a$ is an integral basis of $\cO_{K_a}$ gives
the result.
\end{proof}

For the power basis one has
\[
\disc(1,\theta,\dots,\theta^{n-1})
=
\disc(X^n-a)
=
(-1)^{\frac{(n-1)(n-2)}{2}}\,n^n a^{\,n-1}.
\]
Indeed, if \(f(X)=X^n-a\), then for a monic polynomial
\[
\disc(f)=(-1)^{\frac{n(n-1)}2}\operatorname{Res}(f,f').
\]
Since \(f'(X)=nX^{n-1}\), and the product of the roots of \(X^n-a\) is
\((-1)^{n+1}a\), we get
\[
\operatorname{Res}(f,f')
=
\prod_{f(\alpha)=0} f'(\alpha)
=
n^n\left(\prod_{f(\alpha)=0}\alpha\right)^{n-1}
=
n^n\bigl((-1)^{n+1}a\bigr)^{n-1}.
\]
Thus
\[
\disc(X^n-a)
=
(-1)^{\frac{n(n-1)}2+(n+1)(n-1)}n^n a^{n-1}
=
(-1)^{\frac{(n-1)(n-2)}2}n^n a^{n-1}.
\]
Taking absolute values therefore yields
\begin{equation}\label{eq:disc-raw-revised}
|\disc(K_a)|
=
n^n |a|^{n-1}\cdot
\prod_{m=1}^{n-1} C_m(a)^{-2}\cdot
\prod_{m=1}^{n-1} D_m(a)^{-2}.
\end{equation}

The basis-change formula becomes much more transparent when rewritten in strong-decomposition coordinates: each squarefree block \(a_j\) contributes with an exponent depending only on \(\gcd(j,n)\). This is the first appearance of the divisor-lattice pattern.

\begin{corollary}[Uniform exponent vector in the strong decomposition]
\label{cor:disc-exponent-vector-revised}
Assume Hypothesis~\textup{(H)} and define
\[
w_j:=n-\gcd(j,n)
\qquad (1\le j\le n-1).
\]
Then
\begin{equation}\label{eq:disc-factor-revised}
|\disc(K_a)|=\kappa_n\bigl(S(a)\bigr)\cdot \prod_{j=1}^{n-1} a_j^{\,w_j},
\end{equation}
where
\[
\kappa_n\bigl(S(a)\bigr):=n^n\prod_{m=1}^{n-1} D_m(a)^{-2}.
\]
In particular, $\kappa_n\bigl(S(a)\bigr)$ depends only on the datum $S(a)$, and hence only on the
residue class of $a\bmod M(n)$.
\end{corollary}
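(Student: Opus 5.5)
Starting from \eqref{eq:disc-raw-revised}, which already isolates the factor $n^n\prod_m D_m(a)^{-2}=\kappa_n(S(a))$, the statement reduces to the multiplicative identity
\[
|a|^{n-1}\prod_{m=1}^{n-1}C_m(a)^{-2}=\prod_{j=1}^{n-1}a_j^{\,n-\gcd(j,n)} .
\]
I will verify this prime by prime using the strong decomposition \eqref{eq:strong-decomp-disc}. Since the $a_j$ are squarefree and pairwise coprime, both sides factor over the primes $\ell$ dividing $a$. For each such $\ell$ there is a unique $j$ with $\ell\mid a_j$, namely $j=v_\ell(a)$, exactly as in the proof of Proposition~\ref{prop:integrality-em}. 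It therefore suffices to compare $\ell$-adic valuations for one such $\ell$.

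Fix a prime $\ell$ with $\ell\mid a_j$. On the left, $v_\ell(|a|^{n-1})=j(n-1)$ and $v_\ell(C_m(a))=\lfloor jm/n\rfloor$. The left-hand valuation is therefore
\[
j(n-1)-2\sum_{m=1}^{n-1}\lfloor jm/n\rfloor .
\]
Lemma~\ref{lem:floor-sum-exact} gives $2\sum_m\lfloor jm/n\rfloor=(j-1)(n-1)+(g-1)$ with $g=\gcd(j,n)$. Substituting, the valuation becomes
\[
j(n-1)-(j-1)(n-1)-(g-1)=n-g=w_j ,
\]
which is the right-hand valuation. Primes not dividing $a$ contribute zero on both sides, so the identity, and hence \eqref{eq:disc-factor-revised}, follows.

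For the dependence statement, $\kappa_n(S(a))=n^n\prod_m D_m(a)^{-2}$ involves only the exponents $k_{p,m}$, since $D_m(a)=\prod_{p\mid n}p^{k_{p,m}}$. These exponents are components of the local data $S_p(a)$ (Definition~\ref{def:ND-shape}), so $\kappa_n$ is a function of $S(a)$. By Theorem~\ref{thm:min-periodicity}, $S(a)$ depends only on $a\bmod M(n)$, and so does $\kappa_n$.

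The only point needing care is that the $k_{p,m}$ are well defined as functions of $S(a)$. Definition~\ref{def:normalized-integral-basis} fixes them, and Lemma~\ref{lem:monomial-max-away-n} shows $\prod_m D_m(a)=[\cO_{K_a}:L_0(a)]$ is intrinsic. Beyond this, the argument is bookkeeping, and I expect no real obstacle.
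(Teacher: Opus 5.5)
Your proposal is correct and follows the paper's proof: both start from \eqref{eq:disc-raw-revised}, use Lemma~\ref{lem:floor-sum-exact} to compute the exponent of $a_j$ in $\prod_m C_m(a)^2$, and subtract it from $j(n-1)$ to obtain $w_j=n-\gcd(j,n)$. Your prime-by-prime valuation check is a slightly more explicit version of the paper's exponent comparison, and you justify the dependence of $\kappa_n$ on $S(a)$ and on $a \bmod M(n)$ the same way the paper does, through the exponents $k_{p,m}$ and Theorem~\ref{thm:min-periodicity}.
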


\begin{proof}
From \eqref{eq:Cm-def},
\[
\prod_{m=1}^{n-1} C_m(a)^2
=\prod_{j=1}^{n-1} a_j^{\,2\sum_{m=1}^{n-1}\lfloor jm/n\rfloor}.
\]
By Lemma~\ref{lem:floor-sum-exact}, the exponent of $a_j$ in the latter product equals
\[
2\sum_{m=1}^{n-1}\Bigl\lfloor\frac{jm}{n}\Bigr\rfloor
=(j-1)(n-1)+(\gcd(j,n)-1).
\]
The exponent of $a_j$ in $|a|^{n-1}=\prod_j a_j^{j(n-1)}$ is $j(n-1)$.
Subtracting exponents in \eqref{eq:disc-raw-revised} gives
\[
j(n-1)-\bigl((j-1)(n-1)+(\gcd(j,n)-1)\bigr)=n-\gcd(j,n)=w_j.
\]
This proves \eqref{eq:disc-factor-revised}.
\end{proof}

There is also a complementary archimedean reading of the same formula: the discriminant can be reconstructed from the diagonal parameters \(s_m(a)\) together with the discrete determinant factor \(\det C(a)\). This is the discriminant analogue of the shape factorization.

\begin{proposition}[Discriminant from the diagonal parameters and the discrete factor]
\label{prop:disc-from-s-revised}
Assume that $a$ is admissible and satisfies Hypothesis~\textup{(H)}.
Then
\begin{equation}\label{eq:disc-from-s-revised}
|\disc(K_a)|
=n^n\left(\prod_{m=1}^{n-1}s_m(a)\right)\bigl(\det C(a)\bigr)^2.
\end{equation}
In particular, because
\[
\det C(a)=\prod_{m=1}^{n-1} D_m(a)^{-1}
\]
by Lemma~\ref{lem:C-matrix}\textup{(b)}, the factor $\bigl(\det C(a)\bigr)^2$ depends only on the datum $S(a)$.
\end{proposition}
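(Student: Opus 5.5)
The plan is to compare two computations of the same Gram determinant. First, take the $\Z$-basis $(b_1^\perp,\dots,b_{n-1}^\perp)$ of $\cO_{K_a}^\perp$ coming from the fixed normalized integral basis $\mathcal B_a$. By Proposition~\ref{prop:covolume-tracezero},
\[
\det G^\perp(a)=n^{2n-3}\,|\disc(K_a)|.
\]
Second, compute the same determinant through Theorem~\ref{thm:factorization}, which gives $G^\perp(a)=C(a)^{\mathsf T}G_0^\perp(a)C(a)$. By Lemma~\ref{lem:G0perp},
\[
\det G^\perp(a)=(\det C(a))^2\det G_0^\perp(a)=(\det C(a))^2\,n^{3(n-1)}\prod_{m=1}^{n-1}s_m(a).
\]
Equating the two expressions and dividing by $n^{2n-3}$ leaves the power $n^{3n-3-(2n-3)}=n^n$. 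This is exactly \eqref{eq:disc-from-s-revised}.

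For the second assertion, substitute $\det C(a)=\prod_m D_m(a)^{-1}$ from Lemma~\ref{lem:C-matrix}(b). The exponents $k_{p,m}$ defining $D_m(a)$ are part of the datum $S(a)$ by Definition~\ref{def:ND-shape}, so $(\det C(a))^2$ depends only on $S(a)$. The same holds for any normalized basis: by Lemma~\ref{lem:Xi-well-defined} a change of basis multiplies $C(a)$ by a unipotent $U$, which leaves the determinant unchanged.

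As an independent cross-check, compare with \eqref{eq:disc-raw-revised}. The product $\prod_m s_m(a)$ equals $|a|^{\sum_m 2m/n}\prod_m C_m(a)^{-2}$, and $\sum_{m=1}^{n-1}2m/n=n-1$. So the right-hand side becomes $n^n|a|^{n-1}\prod_m C_m(a)^{-2}\prod_m D_m(a)^{-2}$, which agrees with \eqref{eq:disc-raw-revised}. This second route alone would also prove the proposition.

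There is no serious obstacle here. The only step needing care is the power of $n$: one must get $n^{3(n-1)}$ from the diagonal model, $n^{2n-3}$ from the covolume proposition, and confirm that their ratio is $n^n$. The direct comparison with \eqref{eq:disc-raw-revised} confirms that count.
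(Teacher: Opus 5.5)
Your proof is correct, but your main route differs from the paper's. Your ``cross-check'' is essentially the paper's proof.

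The paper computes $\prod_m s_m(a)=|a|^{n-1}/\prod_m C_m(a)^2$ directly from the definition. It substitutes this into \eqref{eq:disc-raw-revised}, which comes from the explicit polynomial discriminant $\disc(X^n-a)=\pm n^n a^{n-1}$ and the triangular change of basis of Proposition~\ref{prop:disc-ND-basis}. It then rewrites $\prod_m D_m(a)^{-2}$ as $(\det C(a))^2$ via Lemma~\ref{lem:C-matrix}(b).

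Your primary route instead takes determinants in $G^\perp(a)=C(a)^{\mathsf T}G_0^\perp(a)C(a)$ and compares with the unconditional covolume formula of Proposition~\ref{prop:covolume-tracezero}. The exponent count $3(n-1)-(2n-3)=n$ is right. There is no circularity, since neither Theorem~\ref{thm:factorization} nor Proposition~\ref{prop:covolume-tracezero} depends on the present proposition. In effect you run Remark~\ref{rem:disc-covolume-consistency} in the opposite direction.

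\emph{What your route buys.} It never needs the resultant computation of $\disc(X^n-a)$. The powers of $n$ and $|a|$ come from the orthogonality of monomials (Lemma~\ref{lem:G0perp}) and the identity $\det G_\infty=|\disc|$ (Lemma~\ref{lem:gram-absdisc}). Combined with Proposition~\ref{prop:disc-ND-basis}, it even re-derives $|\disc(1,\theta,\dots,\theta^{n-1})|=n^n|a|^{n-1}$. It also exhibits the proposition as the determinant shadow of the shape factorization.

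\emph{What the paper's route buys.} It stays inside $K_a$ with no archimedean input. It produces \eqref{eq:disc-raw-revised} in exactly the form used next for Corollary~\ref{cor:disc-exponent-vector-revised}. It also keeps Remark~\ref{rem:disc-covolume-consistency} a genuine independent check, which your route turns into a tautology.

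Your handling of the second assertion, via the exponents $k_{p,m}$ recorded in $S(a)$, is fine.
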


\begin{proof}
By definition,
\[
\prod_{m=1}^{n-1}s_m(a)
=\prod_{m=1}^{n-1}\frac{|a|^{2m/n}}{C_m(a)^2}
=\frac{|a|^{\frac{2}{n}\sum_{m=1}^{n-1}m}}{\prod_{m=1}^{n-1}C_m(a)^2}
=\frac{|a|^{n-1}}{\prod_{m=1}^{n-1}C_m(a)^2}.
\]
Substituting this into \eqref{eq:disc-raw-revised} gives
\[
|\disc(K_a)|
=n^n\left(\prod_{m=1}^{n-1}s_m(a)\right)\left(\prod_{m=1}^{n-1}D_m(a)^{-2}\right).
\]
By Lemma~\ref{lem:C-matrix}\textup{(b)},
\[
\prod_{m=1}^{n-1}D_m(a)^{-2}=\bigl(\det C(a)\bigr)^2,
\]
which proves \eqref{eq:disc-from-s-revised}.
\end{proof}

\begin{remark}[Consistency with the covolume formula]\label{rem:disc-covolume-consistency}
Combining Proposition~\ref{prop:disc-from-s-revised} with Theorem~\ref{thm:factorization} and
Lemma~\ref{lem:G0perp} recovers Proposition~\ref{prop:covolume-tracezero}:
\[
\mathrm{covol}\bigl(J(\cO_{K_a}^\perp)\bigr)^2=n^{2n-3}|\disc(K_a)|.
\]
Thus the discriminant decomposition above is fully consistent with the unconditional covolume computation already
proved in Section~\ref{sec:minkowski-shape}.
\end{remark}

\subsection{Symmetrized diagonal invariants and divisor products}

To connect the discriminant formulas with the later geometric coordinates, it is helpful to rewrite the diagonal parameters \(s_m(a)\) directly in terms of the strong decomposition.

\begin{lemma}[Strong-decomposition formula for $s_m(a)$]\label{lem:s-strong}
For $1\le m\le n-1$ one has
\begin{equation}\label{eq:s-strong}
s_m(a)=\prod_{j=1}^{n-1} a_j^{\,2\{jm/n\}},
\end{equation}
where $\{x\}=x-\lfloor x\rfloor$ denotes the fractional part.
\end{lemma}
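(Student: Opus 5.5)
The plan is to compute both sides prime by prime, using only the strong decomposition and the definition \eqref{eq:sm-def}. First, from \eqref{eq:strong-decomp} we have \(|a|=\prod_{j=1}^{n-1}a_j^{\,j}\), since \(\varepsilon=\pm1\) disappears under the absolute value. Every factor here is a positive real number, so we may take real powers factor by factor and obtain
\[
|a|^{2m/n}=\prod_{j=1}^{n-1}a_j^{\,2jm/n},
\]
where \(a_j^{x}\) means the positive real power. Next, the definition \eqref{eq:Cm-def} gives
\[
C_m(a)^2=\prod_{j=1}^{n-1}a_j^{\,2\lfloor jm/n\rfloor}.
\]
Dividing the first product by the second, exponent by exponent for each \(j\), yields
\[
s_m(a)=\prod_{j=1}^{n-1}a_j^{\,2(jm/n-\lfloor jm/n\rfloor)}=\prod_{j=1}^{n-1}a_j^{\,2\{jm/n\}},
\]
which is \eqref{eq:s-strong}.

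The only point needing care is the manipulation of real exponents. The rules \((xy)^t=x^ty^t\) and \(x^{s}/x^{t}=x^{s-t}\) hold for positive reals, and the positive root \(|a|^{1/n}\) is the one used in Proposition~\ref{prop:orthogonality}, where \(|\theta_0|^{2m}=|a|^{2m/n}\). Because every quantity involved is positive, no branch ambiguity arises.

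I expect no real obstacle. The identity is formal once the real powers are interpreted as positive; pairwise coprimality is not even needed for it. As a cross-check, multiplying \eqref{eq:s-strong} over \(m\) and using Lemma~\ref{lem:floor-sum-exact} should recover the exponent \(n-\gcd(j,n)\) from Corollary~\ref{cor:disc-exponent-vector-revised}, together with \(\prod_m s_m(a)=|a|^{n-1}/\prod_m C_m(a)^2\) from Proposition~\ref{prop:disc-from-s-revised}.
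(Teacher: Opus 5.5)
Your proposal is correct and follows the paper's proof. The paper likewise writes $|a|^{2m/n}=\prod_j a_j^{2jm/n}$, divides by $C_m(a)^2=\prod_j a_j^{2\lfloor jm/n\rfloor}$, and reads off the exponent $2\{jm/n\}$ separately for each $j$.
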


\begin{proof}
By \eqref{eq:strong-decomp-disc},
\[
|a|^{2m/n}=\prod_{j=1}^{n-1} a_j^{2jm/n}.
\]
Dividing by
\[
C_m(a)^2=\prod_{j=1}^{n-1} a_j^{2\lfloor jm/n\rfloor}
\]
gives
\[
s_m(a)=\prod_{j=1}^{n-1} a_j^{2jm/n-2\lfloor jm/n\rfloor}
=\prod_{j=1}^{n-1} a_j^{2\{jm/n\}}.
\]
\end{proof}

The products \(s_m(a)s_{n-m}(a)\) simplify dramatically: the asymmetry in the fractional-part formula disappears, and only divisibility conditions remain. This is where the divisor-lattice structure becomes completely visible.

\begin{lemma}[Symmetrized diagonal products depend only on $\gcd(m,n)$]
\label{lem:symmetrized-products-revised}
For $1\le m\le n-1$, define
\[
\Sigma_m(a):=s_m(a)s_{n-m}(a).
\]
Then
\begin{equation}\label{eq:symmetrized-products-revised}
\Sigma_m(a)=\prod_{\substack{1\le j\le n-1\\ n\nmid mj}} a_j^{\,2}.
\end{equation}
Equivalently, if $g=\gcd(m,n)$ and $d=n/g$, then
\begin{equation}\label{eq:symmetrized-products-divisor}
\Sigma_m(a)=\prod_{\substack{1\le j\le n-1\\ d\nmid j}} a_j^{\,2}.
\end{equation}
In particular, $\Sigma_m(a)$ depends only on $\gcd(m,n)$.
\end{lemma}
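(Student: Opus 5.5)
The plan is to start from Lemma~\ref{lem:s-strong}, which gives
\[
\Sigma_m(a)=s_m(a)s_{n-m}(a)=\prod_{j=1}^{n-1} a_j^{\,2(\{jm/n\}+\{j(n-m)/n\})}.
\]
This reduces the statement to a prime-by-prime, or rather index-by-index, computation of the exponent attached to each $a_j$. Because the $a_j$ are fixed positive integers and the formula is multiplicative in them, it suffices to show for each $j$ that
\[
\{jm/n\}+\{j(n-m)/n\}=\begin{cases}0,& n\mid mj,\\ 1,& n\nmid mj.\end{cases}
\]

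For this, write $j(n-m)/n=j-jm/n$. Since $j$ is an integer,
\[
\{j(n-m)/n\}=\{-jm/n\}.
\]
We then use the elementary fact that for real $x$ one has $\{x\}+\{-x\}=0$ if $x\in\Z$ and $=1$ otherwise. Applied with $x=jm/n$, this gives exactly the dichotomy above: $jm/n\in\Z$ precisely when $n\mid mj$. Substituting back yields \eqref{eq:symmetrized-products-revised}.

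For the divisor form, let $g=\gcd(m,n)$, $d=n/g$ and $m=gm_0$, so that $\gcd(m_0,d)=1$. Then $n\mid mj$ iff $gd\mid gm_0 j$ iff $d\mid m_0 j$ iff $d\mid j$, by coprimality. Hence the index set $\{j: n\nmid mj\}$ equals $\{j: d\nmid j\}$. This proves \eqref{eq:symmetrized-products-divisor}, and since $d$ depends only on $g$, the final assertion that $\Sigma_m(a)$ depends only on $\gcd(m,n)$ follows at once.

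There is no serious obstacle here. The only point needing care is the fractional-part identity $\{x\}+\{-x\}\in\{0,1\}$ and its exact integrality criterion, together with the fact that the $j$-shift doesn't affect fractional parts.
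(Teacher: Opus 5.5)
Your proposal is correct and follows the paper's proof essentially step for step. Like the paper, you expand via Lemma~\ref{lem:s-strong}, use $\{j(n-m)/n\}=\{-jm/n\}$ and the dichotomy $\{x\}+\{-x\}\in\{0,1\}$, and then write $m=gm_0$, $n=gd$ with $\gcd(m_0,d)=1$ to turn $n\mid mj$ into $d\mid j$.
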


\begin{proof}
By Lemma~\ref{lem:s-strong},
\[
\Sigma_m(a)
=\prod_{j=1}^{n-1} a_j^{\,2\{jm/n\}+2\{j(n-m)/n\}}.
\]
Fix $j$ and write $x=jm/n$.
Since
\[
\frac{j(n-m)}{n}=j-x,
\]
and $j\in \Z$, one has
\[
\Bigl\{\frac{j(n-m)}{n}\Bigr\}=\{j-x\}=\{-x\}.
\]
If $x\notin \Z$ (equivalently, $n\nmid mj$), then $\{-x\}=1-\{x\}$, so the exponent of $a_j$ is
\[
2\{x\}+2(1-\{x\})=2.
\]
If $x\in \Z$ (equivalently, $n\mid mj$), then both fractional parts vanish, so the exponent is $0$.
This proves \eqref{eq:symmetrized-products-revised}.

Now write $m=gm'$ and $n=gn'$ with $\gcd(m',n')=1$.
Then
\[
n\mid mj
\iff gn'\mid gm'j
\iff n'\mid j,
\]
so the condition $n\nmid mj$ is equivalent to $n'=d\nmid j$.
This proves \eqref{eq:symmetrized-products-divisor}.
\end{proof}

As a consequence, the arithmetic divisor-products \(B_d(a)\) can already be read off from simple combinations of the archimedean diagonal invariants.

\begin{corollary}[Divisor-products recovered from the symmetrized diagonal invariants]
\label{cor:divisor-products-revised}
For every divisor $d\mid n$, define
\[
B_d(a):=\prod_{\substack{1\le j\le n-1\\ d\mid j}} a_j\in \Z_{>0}.
\]
Then:
\begin{enumerate}[label=\textup{(\roman*)},leftmargin=2.4em]
\item
\begin{equation}\label{eq:B1-sigma1}
B_1(a)^2=\Sigma_1(a)=s_1(a)s_{n-1}(a);
\end{equation}
\item for every divisor $d$ with $1<d\le n$,
\begin{equation}\label{eq:Bd-sigma}
B_d(a)^2=\frac{\Sigma_1(a)}{\Sigma_{n/d}(a)}
=\frac{s_1(a)s_{n-1}(a)}{s_{n/d}(a)s_{n-n/d}(a)}.
\end{equation}
\end{enumerate}
\end{corollary}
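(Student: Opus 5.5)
The plan is to read everything off Lemma~\ref{lem:symmetrized-products-revised}, which gives $\Sigma_m(a)=\prod_{1\le j\le n-1,\ d\nmid j}a_j^2$ with $d=n/\gcd(m,n)$. Since the $a_j$ are positive, the only work is to identify the right index sets and divide.

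For part (i), take $m=1$. Then $\gcd(1,n)=1$ and $d=n$. No $j$ with $1\le j\le n-1$ is divisible by $n$, so every index contributes and $\Sigma_1(a)=\prod_{j=1}^{n-1}a_j^2$. On the other side, every $j$ is divisible by $1$, so $B_1(a)=\prod_{j=1}^{n-1}a_j$. Comparing the two gives \eqref{eq:B1-sigma1}, and the identity $\Sigma_1(a)=s_1(a)s_{n-1}(a)$ is just the definition of $\Sigma_1$.

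For part (ii), fix $d\mid n$ with $1<d\le n$ and set $m=n/d$, so that $1\le m\le n-1$.
\begin{itemize}
\item Write $\gcd(m,n)=m$; this holds because $m\mid n$. It follows that $n/\gcd(m,n)=d$.
\item Lemma~\ref{lem:symmetrized-products-revised} then gives $\Sigma_{n/d}(a)=\prod_{d\nmid j}a_j^2$.
\item Dividing $\Sigma_1(a)=\prod_{j}a_j^2$ by this product cancels exactly the factors with $d\nmid j$. What remains is $\prod_{d\mid j}a_j^2=B_d(a)^2$.
\item Finally, expand $\Sigma_{n/d}(a)=s_{n/d}(a)s_{n-n/d}(a)$ by definition.
\end{itemize}

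At the endpoint $d=n$ we get $m=1$ and $B_n(a)=1$, since the product defining it is empty. The formula correctly returns $\Sigma_1/\Sigma_1=1$.

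I do not expect a real obstacle here. The only point that needs care is confirming that $n/d$ lies in the range $[1,n-1]$, which requires $d>1$, and that $\gcd(n/d,n)=n/d$, so that the lemma's parameter $d$ coincides with the given divisor.
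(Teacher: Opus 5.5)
Your proposal is correct and follows essentially the same route as the paper: apply Lemma~\ref{lem:symmetrized-products-revised} with $m=1$ to get $\Sigma_1(a)=B_1(a)^2$, then with $m=n/d$ (using $\gcd(n/d,n)=n/d$) to get $\Sigma_{n/d}(a)=\Sigma_1(a)/B_d(a)^2$, and rearrange. Your explicit check of the endpoint $d=n$, where $B_n(a)=1$, is a harmless addition that the paper leaves implicit.
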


\begin{proof}
By Lemma~\ref{lem:symmetrized-products-revised} with $m=1$, we have
\[
\Sigma_1(a)=\prod_{j=1}^{n-1} a_j^2=B_1(a)^2,
\]
which proves \eqref{eq:B1-sigma1}.
Now let $d>1$ and set $m=n/d$.
Then Lemma~\ref{lem:symmetrized-products-revised} gives
\[
\Sigma_{n/d}(a)=\prod_{\substack{1\le j\le n-1\\ d\nmid j}} a_j^2
=\frac{\prod_{j=1}^{n-1} a_j^2}{\prod_{\substack{1\le j\le n-1\\ d\mid j}} a_j^2}
=\frac{\Sigma_1(a)}{B_d(a)^2}.
\]
Rearranging yields \eqref{eq:Bd-sigma}.
\end{proof}

Putting everything together, the discriminant admits a clean divisor-lattice factorization. This is the form that best reflects the later separation between continuous shape variables and arithmetic growth variables.

\begin{proposition}[Divisor-lattice factorization of the discriminant]
\label{prop:disc-divisor-factorization-revised}
Assume Hypothesis~\textup{(H)}.
Then
\begin{equation}\label{eq:disc-divisor-factorization-revised}
|\disc(K_a)|
=\kappa_n\bigl(S(a)\bigr)\cdot B_1(a)^{\,n-1}
\cdot \prod_{\substack{d\mid n\\ 1<d<n}} B_d(a)^{-\varphi(d)},
\end{equation}
where $\kappa_n\bigl(S(a)\bigr)$ is as in Corollary~\ref{cor:disc-exponent-vector-revised}.
Equivalently,
\[
|\disc(K_a)|
=\kappa_n\bigl(S(a)\bigr)\cdot \bigl(s_1(a)s_{n-1}(a)\bigr)^{\frac{n-1}{2}}
\cdot
\prod_{\substack{d\mid n\\ 1<d<n}}
\left(\frac{s_{n/d}(a)s_{n-n/d}(a)}{s_1(a)s_{n-1}(a)}\right)^{\!\frac{\varphi(d)}{2}}.
\]
\end{proposition}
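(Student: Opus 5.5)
The proof starts from Corollary~\ref{cor:disc-exponent-vector-revised}:
\[
|\disc(K_a)|=\kappa_n\bigl(S(a)\bigr)\prod_{j=1}^{n-1}a_j^{\,n-\gcd(j,n)} .
\]
Both sides of \eqref{eq:disc-divisor-factorization-revised} share the factor \(\kappa_n(S(a))\), so it suffices to verify an identity of exponent vectors on the pairwise coprime squarefree blocks \(a_j\). The claim to check is that, for each fixed \(j\) with \(1\le j\le n-1\), the exponent of \(a_j\) on the right-hand side equals \(n-\gcd(j,n)\).

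By definition \(B_d(a)=\prod_{d\mid j}a_j\), so \(a_j\) appears in \(B_d(a)\) exactly when \(d\mid j\). The exponent of \(a_j\) on the right is therefore
\[
(n-1)-\sum_{\substack{d\mid \gcd(j,n)\\ 1<d<n}}\varphi(d).
\]
Since \(j<n\), we have \(g=\gcd(j,n)\) a proper divisor of \(n\), and the condition \(d<n\) is automatic. The sum thus runs over all divisors \(d>1\) of \(g\), and by Gauss's identity \(\sum_{d\mid g}\varphi(d)=g\) it equals \(g-1\). The exponent becomes \((n-1)-(g-1)=n-g\), which matches \(w_j\).

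This is an equality of exponents prime by prime, because the \(a_j\) are coprime and squarefree. It proves the first displayed formula. It also shows that the \(B_d\) are genuine positive integers, so the negative powers cause no ambiguity.

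For the equivalent archimedean form, substitute Corollary~\ref{cor:divisor-products-revised}. Part (i) gives \(B_1(a)=(s_1(a)s_{n-1}(a))^{1/2}\). Part (ii) gives
\[
B_d(a)^{-\varphi(d)}=\left(\frac{s_{n/d}(a)s_{n-n/d}(a)}{s_1(a)s_{n-1}(a)}\right)^{\varphi(d)/2}.
\]
All quantities involved are positive reals, so taking positive square roots is legitimate.

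The only point needing care is the divisor-sum bookkeeping: \(d=1\) must be excluded, which accounts for the \(-1\) in \(g-1\), and \(d=n\) must be excluded, which is harmless because \(n\nmid j\). I expect no real obstacle beyond this.
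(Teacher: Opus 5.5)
Your proposal is correct and follows the paper's proof essentially verbatim. Like the paper, you start from the exponent vector $w_j=n-\gcd(j,n)$ of Corollary~\ref{cor:disc-exponent-vector-revised}, match the exponent of each $a_j$ using $\sum_{d\mid g}\varphi(d)=g$ (the divisors $d\mid n$ with $d\mid j$ are exactly the divisors of $g$), and then substitute Corollary~\ref{cor:divisor-products-revised} to obtain the archimedean form.
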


\begin{proof}
By Corollary~\ref{cor:disc-exponent-vector-revised},
\[
|\disc(K_a)|=\kappa_n\bigl(S(a)\bigr)\prod_{j=1}^{n-1} a_j^{\,n-\gcd(j,n)}.
\]
Fix $j$ and set $g=\gcd(j,n)$.
Using the identity
\[
\sum_{d\mid g}\varphi(d)=g,
\]
we obtain
\[
n-g=(n-1)-\sum_{\substack{d\mid g\\ d>1}}\varphi(d).
\]
Now the exponent of $a_j$ in
\[
B_1(a)^{\,n-1}\cdot \prod_{\substack{d\mid n\\ 1<d<n}} B_d(a)^{-\varphi(d)}
\]
is exactly
\[
(n-1)-\sum_{\substack{d\mid j\\ d>1}}\varphi(d)
=(n-1)-\sum_{\substack{d\mid g\\ d>1}}\varphi(d)
=n-g,
\]
because the divisors $d\mid n$ with $d\mid j$ are precisely the divisors of $g$.
This proves \eqref{eq:disc-divisor-factorization-revised}.
The equivalent formula in terms of the $s_m(a)$ follows from
Corollary~\ref{cor:divisor-products-revised}.
\end{proof}

\begin{remark}\label{rem:disc-divisor-interpretation}
Proposition~\ref{prop:disc-divisor-factorization-revised} shows that, after removing the finite periodic factor
$\kappa_n\bigl(S(a)\bigr)$ supported at primes dividing $n$, the discriminant is controlled by the squarefree
``divisor-products'' $B_d(a)$ indexed by the divisor lattice of $n$.
Equivalently, the symmetrized archimedean parameters $\Sigma_m(a)=s_m(a)s_{n-m}(a)$ canonically package the
strong-decomposition variables according to the divisibility of their exponent in $a$.
\end{remark}

\section{Pure locus in shape space}
\label{sec:paired-rational-leaves}

The aim of this section is to isolate the geometric support of the pure-field
shape locus and the coordinate mechanism that governs it.

There are three points to keep distinct.

First, for admissible \(a\) satisfying Hypothesis~\textup{(H)}, the
factorization theorem of Section~\ref{sec:shapes-pure-fields} produces a
rational upper-triangular matrix
\[
C(a)\in \GL_{n-1}(\Q)
\]
such that
\[
\operatorname{sh}(K_a)
=
\Bigl[
C(a)^{\mathsf T}\diag\bigl(s_1(a),\dots,s_{n-1}(a)\bigr)C(a)
\Bigr].
\]
Its right \(U_{n-1}^+(\Z)\)-class
\[
\Xi(a)=[C(a)]\in \GL_{n-1}(\Q)/U_{n-1}^+(\Z)
\]
is the fine presentation-dependent normalized stratum parameter.

Second, the actual geometric support of the shape is encoded by the rational
diagonal leaf. For a rational matrix \(C\), one considers
\[
\mathcal T_C
:=
\{[C^{\mathsf T}DC]: D \text{ positive diagonal}\}
\subset \mathscr S_{n-1}.
\]
By Lemma~\ref{lem:leaf-equivalence-PCU}, this set depends only on \(C\) up to
positive diagonal rescaling on the left and integral basis change on the
right. We will not use any further identification between distinct normalized
strata.

Third, on a fixed normalized stratum, the shape depends only on ratio variables,
whereas the discriminant, after removing the finite periodic factor, depends
only on product variables.  This is the structural reason that the natural
counting problem for pure-field shapes is leafwise rather than ambient.

\subsection{Paired coordinates and fixed-stratum reduction}

Throughout this section, let \(a\) be admissible and write its strong
decomposition as in \eqref{eq:strong-decomp-disc}:
\[
a=\varepsilon\prod_{j=1}^{n-1} a_j^{\,j}.
\]

Recall from \eqref{eq:sm-def} that
\[
s_m(a)=\frac{|a|^{2m/n}}{C_m(a)^2}
\qquad (1\le m\le n-1),
\]
and hence, by Lemma~\ref{lem:s-strong},
\begin{equation}\label{eq:s-strong-paired}
s_m(a)=\prod_{j=1}^{n-1} a_j^{\,2\{jm/n\}},
\qquad
\{x\}:=x-\lfloor x\rfloor.
\end{equation}

Set
\[
t:=\Bigl\lfloor \frac{n-1}{2}\Bigr\rfloor.
\]
For \(1\le j\le t\), define
\[
u_j:=a_j,\qquad
v_j:=a_{n-j},\qquad
P_j:=u_jv_j,\qquad
\rho_j:=\frac{u_j}{v_j}.
\]
If \(n\) is even, also set
\[
b:=a_{n/2}.
\]
Let
\[
\mathcal J_{\mathrm{cop}}
:=
\{1\le j\le t:\ \gcd(j,n)=1\},
\qquad
d:=|\mathcal J_{\mathrm{cop}}|=\frac{\varphi(n)}{2}.
\]
Finally, define the discrete label
\begin{equation}\label{eq:delta-fixed-leaf}
\delta(a)
:=
\Bigl(
(u_j,v_j)_{\substack{1\le j\le t\\ \gcd(j,n)>1}},
\ b\text{ if }n\text{ is even}
\Bigr).
\end{equation}
Thus \(\delta(a)\) records exactly the strong-decomposition variables attached
to non-coprime indices.

For an abstract discrete label
\[
\delta=
\Bigl(
(u_j,v_j)_{\substack{1\le j\le t\\ \gcd(j,n)>1}},
\ b\text{ if }n\text{ is even}
\Bigr),
\]
define
\begin{equation}\label{eq:snc-fixed-leaf}
s_m^{\mathrm{nc}}(\delta)
:=
\prod_{\substack{1\le j\le t\\ \gcd(j,n)>1}}
u_j^{\,2\{mj/n\}}v_j^{\,2\{m(n-j)/n\}}
\cdot
\begin{cases}
b^{\,2\{m/2\}},& n\text{ even},\\
1,& n\text{ odd},
\end{cases}
\qquad (1\le m\le n-1).
\end{equation}
For \(\rho=(\rho_j)_{j\in \mathcal J_{\mathrm{cop}}}\in (\R_{>0})^d\), set
\begin{equation}\label{eq:D-rho-delta-fixed-leaf}
D(\rho;\delta)
:=
\diag\Bigl(
s_m^{\mathrm{nc}}(\delta)
\prod_{j\in \mathcal J_{\mathrm{cop}}}\rho_j^{\,2\{mj/n\}-1}
\Bigr)_{1\le m\le n-1}.
\end{equation}

We now repackage the diagonal data in paired coordinates \((u_j,v_j)\). In these variables one sees directly that ratios and products play fundamentally different roles.

\begin{proposition}[Diagonal parameters in paired coordinates]
\label{prop:diag-paired-fixed-leaf}
For every admissible \(a\) and every \(1\le m\le n-1\),
\begin{equation}\label{eq:s-m-paired-fixed-leaf}
s_m(a)
=
s_m^{\mathrm{nc}}\bigl(\delta(a)\bigr)
\cdot
\Bigl(\prod_{j\in \mathcal J_{\mathrm{cop}}}P_j\Bigr)
\cdot
\prod_{j\in \mathcal J_{\mathrm{cop}}}\rho_j^{\,2\{mj/n\}-1}.
\end{equation}
In particular, the projective class
\[
\bigl[\diag(s_1(a),\dots,s_{n-1}(a))\bigr]
\in \Sym_{n-1}^+(\R)/\R_{>0}
\]
depends only on \(\delta(a)\) and the coprime ratios
\[
\rho(a):=(\rho_j(a))_{j\in \mathcal J_{\mathrm{cop}}},
\]
and is independent of the coprime products
\[
(P_j)_{j\in \mathcal J_{\mathrm{cop}}}.
\]
\end{proposition}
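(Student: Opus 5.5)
The plan is to start from the fractional-part formula \eqref{eq:s-strong-paired} of Lemma~\ref{lem:s-strong}, $s_m(a)=\prod_{j=1}^{n-1}a_j^{2\{jm/n\}}$, and regroup the factors by the pairing $j\leftrightarrow n-j$. The index set $\{1,\dots,n-1\}$ splits into the pairs $\{j,n-j\}$ with $1\le j\le t$, together with the middle index $n/2$ when $n$ is even. For the pair attached to $j$, the factor is $u_j^{2\{mj/n\}}v_j^{2\{m(n-j)/n\}}$. When $n$ is even, the middle factor is $b^{2\{m(n/2)/n\}}=b^{2\{m/2\}}$. Collecting the pairs with $\gcd(j,n)>1$ and the middle factor reproduces exactly $s_m^{\mathrm{nc}}(\delta(a))$ as defined in \eqref{eq:snc-fixed-leaf}.

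The key step is the coprime pairs $j\in\mathcal J_{\mathrm{cop}}$. Since $\gcd(j,n)=1$ and $1\le m\le n-1$, we have $n\nmid mj$, so $x=mj/n\notin\Z$. As in the proof of Lemma~\ref{lem:symmetrized-products-revised}, this gives
\[
\{m(n-j)/n\}=\{-x\}=1-\{x\}.
\]
Writing $f=\{mj/n\}$, the pair contributes
\[
u_j^{2f}v_j^{2-2f}=(u_jv_j)\,(u_j/v_j)^{2f-1}=P_j\,\rho_j^{2\{mj/n\}-1}.
\]
One checks this by comparing exponents: $u_j$ appears with exponent $1+(2f-1)=2f$ and $v_j$ with exponent $1-(2f-1)=2-2f$. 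Taking the product over $j\in\mathcal J_{\mathrm{cop}}$ and multiplying by the noncoprime part gives \eqref{eq:s-m-paired-fixed-leaf}.

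For the projective statement, observe that the factor $\prod_{j\in\mathcal J_{\mathrm{cop}}}P_j$ is independent of $m$. It is therefore a common positive scalar multiplying every diagonal entry, and it disappears in $\Sym_{n-1}^+(\R)/\R_{>0}$. The remaining diagonal is exactly $D(\rho(a);\delta(a))$ from \eqref{eq:D-rho-delta-fixed-leaf}, which depends only on $\delta(a)$ and $\rho(a)$.

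The only point requiring care, and the one I expect to be the main obstacle, is the bookkeeping of the index split. We need the pairs $\{j,n-j\}$ for $1\le j\le t$ to be disjoint and to exhaust $\{1,\dots,n-1\}\setminus\{n/2\}$, where $t=\lfloor (n-1)/2\rfloor$; this must be checked separately for odd and even $n$. We also need the middle index $n/2$, which is never coprime to $n$ since $n\ge 4$ when $n$ is even, to be correctly absorbed into $\delta(a)$. Everything else is the elementary fractional-part identity above.
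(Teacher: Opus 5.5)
Your proposal is correct and is essentially the paper's proof. You regroup the fractional-part formula by the pairing $j\leftrightarrow n-j$, isolating $n/2$ when $n$ is even. For coprime $j$ you use $\{m(n-j)/n\}=1-\{mj/n\}$ to get the factor $P_j\rho_j^{2\{mj/n\}-1}$, and you then observe that $\prod_{j\in\mathcal J_{\mathrm{cop}}}P_j$ is independent of $m$. The paper writes the pair computation as $u_j^{2A}v_j^{2B}=P_j^{A+B}\rho_j^{A-B}$, which is the same identity you check by comparing exponents.
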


\begin{proof}
Starting from \eqref{eq:s-strong-paired}, pair the indices \(j\) and \(n-j\)
for \(1\le j\le t\), and if \(n\) is even isolate the middle index \(j=n/2\).

Fix \(1\le j\le t\) and \(1\le m\le n-1\), and put
\[
A:=\Bigl\{\frac{mj}{n}\Bigr\},
\qquad
B:=\Bigl\{\frac{m(n-j)}{n}\Bigr\}.
\]
Then
\[
u_j^{2A}v_j^{2B}
=
(u_jv_j)^{A+B}(u_j/v_j)^{A-B}
=
P_j^{\,A+B}\rho_j^{\,A-B}.
\]
Moreover,
\[
\frac{m(n-j)}{n}=m-\frac{mj}{n},
\]
so \(B=\{-mj/n\}\).  If \(n\nmid mj\), then \(A\in (0,1)\), hence \(B=1-A\),
and therefore
\[
A+B=1,
\qquad
A-B=2A-1=2\{mj/n\}-1.
\]
If \(n\mid mj\), then \(A=B=0\).

Now suppose \(j\in \mathcal J_{\mathrm{cop}}\). Since \(\gcd(j,n)=1\) and
\(1\le m\le n-1\), one has \(n\nmid mj\). Therefore the \(j\)-pair contributes
\[
u_j^{\,2\{mj/n\}}v_j^{\,2\{m(n-j)/n\}}
=
P_j\,\rho_j^{\,2\{mj/n\}-1}.
\]
If \(\gcd(j,n)>1\), keep the pair contribution inside
\(s_m^{\mathrm{nc}}(\delta(a))\).

If \(n\) is even, the middle index contributes
\[
b^{\,2\{m/2\}}
=
\begin{cases}
b,& m\text{ odd},\\
1,& m\text{ even}.
\end{cases}
\]
Combining all factors yields \eqref{eq:s-m-paired-fixed-leaf}.  The final
assertion follows because the factor
\[
\prod_{j\in \mathcal J_{\mathrm{cop}}}P_j
\]
is independent of \(m\) and therefore contributes only a global scalar.
\end{proof}

\subsection{Normalized strata}

The factorization theorem of Section~\ref{sec:shapes-pure-fields} shows that,
for admissible \(a\) satisfying Hypothesis~\textup{(H)}, the class
\[
\Xi(a)=[C(a)]\in \GL_{n-1}(\Q)/U_{n-1}^+(\Z)
\]
is the natural fine discrete parameter attached to the chosen normalized
integral basis. Once a normalized stratum has been fixed, it is natural to look at the full diagonal family it supports inside shape space. The next definition isolates that support set.

\begin{definition}[Support of a normalized stratum]
\label{def:translated-diagonal-stratum}
Let
\[
\Xi\in \GL_{n-1}(\Q)/U_{n-1}^+(\Z).
\]
Choose any representative \(C\in \GL_{n-1}(\Q)\) of \(\Xi\), and define
\[
\mathscr T_{\Xi}
:=
\Bigl\{
[C^{\mathsf T}\diag(x_1,\dots,x_{n-1})C]:
(x_1,\dots,x_{n-1})\in (\R_{>0})^{n-1}
\Bigr\}
\subset \mathscr S_{n-1}.
\]
This subset is independent of the chosen representative \(C\).
\end{definition}

\begin{proof}
If \(C'=CU\) with \(U\in U_{n-1}^+(\Z)\), then
\[
(C')^{\mathsf T}\diag(x_1,\dots,x_{n-1})C'
=
U^{\mathsf T}\bigl(C^{\mathsf T}\diag(x_1,\dots,x_{n-1})C\bigr)U,
\]
which defines the same point of \(\mathscr S_{n-1}\).
\end{proof}

Once a normalized stratum has been fixed, the previous formula shows that the remaining shape variation is carried entirely by the ratio variables. This is the basic reduction behind the later leafwise geometry.

\begin{proposition}[Shape on a fixed normalized stratum]
\label{prop:shape-on-fixed-leaf}
Assume that \(a\) is admissible and satisfies Hypothesis~\textup{(H)}.
Let \(C\in \GL_{n-1}(\Q)\) be any representative of \(\Xi(a)\).
Then
\begin{equation}\label{eq:shape-on-fixed-leaf}
\operatorname{sh}(K_a)
=
\bigl[C^{\mathsf T}D\bigl(\rho(a);\delta(a)\bigr)C\bigr].
\end{equation}
In particular, once \(\Xi(a)\) is fixed, the shape of \(K_a\) is determined by
the pair \(\bigl(\delta(a),\rho(a)\bigr)\).
\end{proposition}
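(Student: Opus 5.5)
The plan is to combine Corollary~\ref{cor:Xi-factorization} with Proposition~\ref{prop:diag-paired-fixed-leaf}, and to use the fact that points of \(\mathscr S_{n-1}\) are taken modulo positive homotheties.

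First, by Corollary~\ref{cor:Xi-factorization}, for any representative \(C\) of \(\Xi(a)\) we have
\[
\operatorname{sh}(K_a)=\bigl[C^{\mathsf T}\diag\bigl(s_1(a),\dots,s_{n-1}(a)\bigr)C\bigr].
\]
This already uses the well-definedness from Lemma~\ref{lem:Xi-well-defined} and Definition~\ref{def:Xi}. The choice of representative does not matter, since changing \(C\) to \(CU\) with \(U\in U_{n-1}^+(\Z)\) only conjugates the Gram matrix by an integral matrix.

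Second, I rewrite the diagonal using Proposition~\ref{prop:diag-paired-fixed-leaf}. Put
\[
\lambda(a):=\prod_{j\in\mathcal J_{\mathrm{cop}}}P_j>0.
\]
Comparing \eqref{eq:s-m-paired-fixed-leaf} with definition \eqref{eq:D-rho-delta-fixed-leaf}, entry by entry in \(m\), gives
\[
\diag\bigl(s_1(a),\dots,s_{n-1}(a)\bigr)=\lambda(a)\,D\bigl(\rho(a);\delta(a)\bigr).
\]
The only check needed is that the exponent of \(\rho_j\) in both is \(2\{mj/n\}-1\), and that the non-coprime factor is literally \(s_m^{\mathrm{nc}}(\delta(a))\) with \(\delta(a)\) as in \eqref{eq:delta-fixed-leaf}. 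Both hold by construction.

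Third, I pull out the scalar. Since \(C^{\mathsf T}(\lambda D)C=\lambda\,C^{\mathsf T}DC\) and \(\lambda>0\), the two matrices define the same point of \(\Sym^+_{n-1}(\R)/\R_{>0}\), and hence of \(\mathscr S_{n-1}\). This yields \eqref{eq:shape-on-fixed-leaf}. The ``in particular'' clause then follows because the right-hand side depends only on \(C\) (fixed by \(\Xi(a)\)), on \(\delta(a)\) and on \(\rho(a)\).

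I do not expect a real obstacle. The only point requiring care is bookkeeping: the product factor \(\prod_j P_j\) must be independent of \(m\), so that it is a global scalar. This holds because for coprime \(j\) one never has \(n\mid mj\), and Proposition~\ref{prop:diag-paired-fixed-leaf} records exactly this fact.
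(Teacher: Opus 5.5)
Your proof is correct and is essentially the paper's argument. The paper starts from Theorem~\ref{thm:factorization} and replaces $C(a)$ by an arbitrary representative $C$ with $C(a)=CU$, $U\in U_{n-1}^+(\Z)$, which is exactly what Corollary~\ref{cor:Xi-factorization} packages. It then uses Proposition~\ref{prop:diag-paired-fixed-leaf} to factor out the positive scalar $\prod_{j\in\mathcal J_{\mathrm{cop}}}P_j$, just as you do.
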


\begin{proof}
By Theorem~\ref{thm:factorization},
\[
\operatorname{sh}(K_a)
=
\bigl[
C(a)^{\mathsf T}\diag(s_1(a),\dots,s_{n-1}(a))C(a)
\bigr].
\]
If \(C\) represents \(\Xi(a)\), then \(C(a)=CU\) for some
\(U\in U_{n-1}^+(\Z)\), so
\[
\bigl[
C(a)^{\mathsf T}\diag(s_m(a))C(a)
\bigr]
=
\bigl[
C^{\mathsf T}\diag(s_m(a))C
\bigr]
\]
in \(\mathscr S_{n-1}\).

Now apply Proposition~\ref{prop:diag-paired-fixed-leaf}:
\[
\diag(s_1(a),\dots,s_{n-1}(a))
=
\Bigl(\prod_{j\in \mathcal J_{\mathrm{cop}}}P_j\Bigr)
D\bigl(\rho(a);\delta(a)\bigr).
\]
The prefactor is a positive scalar and is therefore invisible in shape space.
\end{proof}

The discriminant behaves dually: after removing the periodic finite factor, the coprime pairs contribute only through their products, not through their ratios. This is the arithmetic counterpart of the previous shape statement.

\begin{proposition}
\label{prop:disc-pairs-only-products-fixed-leaf}
Assume that \(a\) is admissible and satisfies Hypothesis~\textup{(H)}.
Write
\[
\kappa_n(a):=\kappa_n\bigl(S(a)\bigr)
\]
for the periodic factor of Corollary~\ref{cor:disc-exponent-vector-revised}, and
let
\[
w_j:=n-\gcd(j,n)
\qquad (1\le j\le n-1).
\]
Then
\begin{equation}\label{eq:disc-paired-clean-fixed-leaf}
|\disc(K_a)|
=
\kappa_n(a)
\cdot
\Bigl(\prod_{j\in \mathcal J_{\mathrm{cop}}}P_j^{\,n-1}\Bigr)
\cdot
G\bigl(\delta(a)\bigr),
\end{equation}
where
\[
G\bigl(\delta(a)\bigr)
:=
\Bigl(
\prod_{\substack{1\le j\le t\\ \gcd(j,n)>1}}
(u_jv_j)^{w_j}
\Bigr)
\cdot
\begin{cases}
b^{w_{n/2}},& n\text{ even},\\
1,& n\text{ odd}.
\end{cases}
\]
Equivalently,
\begin{equation}\label{eq:disc-paired-normalized-fixed-leaf}
\frac{|\disc(K_a)|}{\kappa_n(a)}
=
\Bigl(\prod_{j\in \mathcal J_{\mathrm{cop}}}P_j^{\,n-1}\Bigr)
\cdot
G\bigl(\delta(a)\bigr).
\end{equation}
In particular, after fixing \(\kappa_n(a)\), the discriminant depends on the
coprime pairs only through the products \(P_j\), and not through the ratios
\(\rho_j\).
\end{proposition}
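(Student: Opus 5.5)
The plan is to start from Corollary~\ref{cor:disc-exponent-vector-revised}, which gives
\[
|\disc(K_a)|=\kappa_n(a)\prod_{j=1}^{n-1}a_j^{\,w_j},\qquad w_j=n-\gcd(j,n).
\]
The rest is bookkeeping. We regroup the product over \(1\le j\le n-1\) according to the same pairing \(j\leftrightarrow n-j\) used in Proposition~\ref{prop:diag-paired-fixed-leaf}. Each \(j\) with \(1\le j\le t\) is paired with \(n-j\), which satisfies \(t<n-j\le n-1\). If \(n\) is even, the single remaining index is \(j=n/2\). If \(n\) is odd, there is no leftover index, since \(2t=n-1\). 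Every index in \(\{1,\dots,n-1\}\) therefore appears exactly once, in one of these pieces.

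The key observation is that \(\gcd(n-j,n)=\gcd(j,n)\), so \(w_{n-j}=w_j\). Hence each pair contributes
\[
a_j^{w_j}a_{n-j}^{w_{n-j}}=(u_jv_j)^{w_j}=P_j^{\,w_j}.
\]
For \(j\in\mathcal J_{\mathrm{cop}}\) we have \(\gcd(j,n)=1\), so \(w_j=n-1\) and the contribution is \(P_j^{\,n-1}\).

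For \(1\le j\le t\) with \(\gcd(j,n)>1\), the contribution \((u_jv_j)^{w_j}\) is exactly one of the factors of \(G(\delta(a))\). When \(n\) is even, the middle index contributes \(b^{w_{n/2}}\), with \(w_{n/2}=n/2\). Multiplying all contributions together yields \eqref{eq:disc-paired-clean-fixed-leaf}, and dividing by \(\kappa_n(a)>0\) gives \eqref{eq:disc-paired-normalized-fixed-leaf}.

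For the final assertion: \(G(\delta(a))\) depends only on the discrete label \(\delta(a)\) from \eqref{eq:delta-fixed-leaf}. The coprime pairs enter the formula only through the monomials \(P_j^{\,n-1}\), and these involve no \(\rho_j\).

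There is no serious obstacle here; the argument reduces to the identity \(w_{n-j}=w_j\) and careful bookkeeping. The only points needing care are that the pairing exhausts \(\{1,\dots,n-1\}\) exactly once, with the middle index isolated when \(n\) is even, and that \(\kappa_n(a)\) is nonzero so the division is legitimate.
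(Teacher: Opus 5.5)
Your proposal is correct and follows essentially the same route as the paper: start from the exponent-vector formula of Corollary~\ref{cor:disc-exponent-vector-revised}, pair \(j\leftrightarrow n-j\) for \(1\le j\le t\) using \(w_{n-j}=w_j\) to obtain \(P_j^{\,n-1}\) on coprime pairs, and keep the non-coprime pairs and the middle index \(b^{w_{n/2}}\) inside \(G(\delta(a))\). Your extra checks (that the pairing exhausts \(\{1,\dots,n-1\}\) exactly once, that \(w_{n/2}=n/2\), and that \(\kappa_n(a)>0\)) are correct details the paper leaves implicit.
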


\begin{proof}
By Corollary~\ref{cor:disc-exponent-vector-revised},
\[
|\disc(K_a)|=\kappa_n(a)\prod_{j=1}^{n-1} a_j^{\,w_j}.
\]
Pair the indices \(j\) and \(n-j\) for \(1\le j\le t\).

If \(j\in \mathcal J_{\mathrm{cop}}\), then \(\gcd(j,n)=1\), so
\[
w_j=w_{n-j}=n-1,
\]
and therefore
\[
a_j^{w_j}a_{n-j}^{w_{n-j}}
=
u_j^{n-1}v_j^{n-1}
=
P_j^{n-1}.
\]
If \(\gcd(j,n)>1\), keep the pair contribution \((u_jv_j)^{w_j}\) inside
\(G(\delta(a))\).  If \(n\) is even, the middle index contributes
\(b^{w_{n/2}}\).  Combining these factors gives
\eqref{eq:disc-paired-clean-fixed-leaf}, and
\eqref{eq:disc-paired-normalized-fixed-leaf} is immediate.
\end{proof}

So on a fixed stratum the discriminant cutoff becomes geometrically simple: it is just a hyperbolic bound in the product variables.

\begin{corollary}
\label{cor:disc-product-constraint-fixed-leaf}
Fix a real number \(\kappa>0\) and a discrete label \(\delta\).
For admissible parameters \(a\) satisfying Hypothesis~\textup{(H)} and
\[
\kappa_n(a)=\kappa,
\qquad
\delta(a)=\delta,
\]
the condition \(|\disc(K_a)|\le X\) is equivalent to
\[
\prod_{j\in \mathcal J_{\mathrm{cop}}}P_j
\le
\left(\frac{X}{\kappa\,G(\delta)}\right)^{1/(n-1)}.
\]
\end{corollary}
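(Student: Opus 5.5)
This is a direct consequence of Proposition~\ref{prop:disc-pairs-only-products-fixed-leaf}; the plan is to rearrange \eqref{eq:disc-paired-normalized-fixed-leaf} and check that every step is an equivalence.

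Fix \(\kappa>0\) and \(\delta\), and let \(a\) be admissible, satisfy Hypothesis~\textup{(H)}, and have \(\kappa_n(a)=\kappa\) and \(\delta(a)=\delta\). Substituting these values into \eqref{eq:disc-paired-clean-fixed-leaf} gives
\[
|\disc(K_a)|=\kappa\, G(\delta)\prod_{j\in\mathcal J_{\mathrm{cop}}}P_j^{\,n-1}.
\]
Here \(G(\delta)\) depends only on the label \(\delta\), since it is built solely from \(u_j,v_j\) with \(\gcd(j,n)>1\), the middle variable \(b\), and the fixed exponents \(w_j\).

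Next we check positivity, which justifies the divisions. Each \(u_j,v_j,b\) is a positive integer, so \(G(\delta)\ge 1>0\). Also \(\kappa>0\) by hypothesis, as is automatic for \(\kappa_n(S(a))=n^n\prod_m D_m(a)^{-2}\). Hence the inequality \(|\disc(K_a)|\le X\) is equivalent to
\[
\prod_{j\in\mathcal J_{\mathrm{cop}}} P_j^{\,n-1}\le \frac{X}{\kappa G(\delta)}.
\]
Since \(n\ge 3\), the exponent \(n-1\) is positive.

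We then take \((n-1)\)-th roots. Note that \(\prod_j P_j^{\,n-1}=\bigl(\prod_j P_j\bigr)^{n-1}\), and \(\prod_j P_j\) is a positive integer (the empty product is \(1\)). The map \(y\mapsto y^{1/(n-1)}\) is strictly increasing on \(\R_{\ge 0}\), so the inequality is equivalent to
\[
\prod_{j\in\mathcal J_{\mathrm{cop}}}P_j\le\left(\frac{X}{\kappa G(\delta)}\right)^{1/(n-1)}.
\]

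The only delicate point is the case \(X\le 0\), where the real \((n-1)\)-th root of a negative number is not defined when \(n-1\) is even. We handle it by noting that \(X<0\) makes both conditions empty: the discriminant is positive in absolute value, and the product of the \(P_j\) is positive. We may therefore assume \(X>0\), which is the implicit convention of the statement, and then the argument above gives the claimed equivalence.
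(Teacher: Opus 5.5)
Your proof is correct and is the paper's argument: the paper just says the corollary is immediate from Proposition~\ref{prop:disc-pairs-only-products-fixed-leaf}. Your positivity checks for $\kappa$ and $G(\delta)$, the monotonicity of $y\mapsto y^{1/(n-1)}$, and the trivial case $X\le 0$ simply spell out that rearrangement.
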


\begin{proof}
This is immediate from Proposition~\ref{prop:disc-pairs-only-products-fixed-leaf}.
\end{proof}

For later reference, if \(\Xi\in \GL_{n-1}(\Q)/U_{n-1}^+(\Z)\) is fixed with
representative \(C\) and \(\delta\) is a discrete label, define
\begin{equation}\label{eq:F-Xi-delta-lite}
\mathcal F_{\Xi,\delta}
:=
\bigl\{
[C^{\mathsf T}D(\rho;\delta)C]:
\rho\in (\R_{>0})^d
\bigr\}
\subset \mathscr T_{\Xi}.
\end{equation}
Then Proposition~\ref{prop:shape-on-fixed-leaf} shows that every admissible
\(a\) satisfying Hypothesis~\textup{(H)} with
\[
\Xi(a)=\Xi,
\qquad
\delta(a)=\delta
\]
satisfies
\[
\operatorname{sh}(K_a)\in \mathcal F_{\Xi,\delta}.
\]

\subsection{Geometry of rational diagonal leaves}

For \(m\ge 2\) and \(C\in \GL_m(\Q)\), define
\[
\Phi_C:(\R_{>0})^m/\R_{>0}\longrightarrow \mathscr S_m,
\qquad
[D]\longmapsto [C^{\mathsf T}DC],
\]
where \(D\) is positive diagonal, and set
\begin{equation}\label{eq:type-locus-fixed-C}
\mathcal T_C
:=
\operatorname{im}(\Phi_C)
=
\{[C^{\mathsf T}DC]: D\text{ positive diagonal}\}
\subset \mathscr S_m.
\end{equation}

Before studying these leaves geometrically, it is useful to record the elementary operations that do not change them. This is the basic equivalence relation underlying all later normalizations.

\begin{lemma}
\label{lem:leaf-equivalence-PCU}
Let \(m\ge 2\), let \(C\in \GL_m(\Q)\), let \(P\) be a positive diagonal matrix
in \(\GL_m(\R)\), and let \(U\in \GL_m(\Z)\). Then
\[
\mathcal T_{PCU}=\mathcal T_C.
\]
\end{lemma}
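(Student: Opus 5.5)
The plan is a direct computation showing that both operations, \(C\mapsto CU\) and \(C\mapsto PC\), preserve the set of points \([C^{\mathsf T}DC]\). We prove the two inclusions \(\mathcal T_{PCU}\subseteq\mathcal T_C\) and \(\mathcal T_C\subseteq\mathcal T_{PCU}\) separately, each by exhibiting, for a given positive diagonal matrix, a positive diagonal matrix on the other side that gives the same point of \(\mathscr S_m\).

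First take the right factor \(U\in\GL_m(\Z)\). For any positive diagonal \(D\),
\[
(CU)^{\mathsf T}D(CU)=U^{\mathsf T}\bigl(C^{\mathsf T}DC\bigr)U.
\]
Under the identification \(\mathscr S_m\simeq \GL_m(\Z)\backslash(\Sym_m^+(\R)/\R_{>0})\) with congruence action \(A\mapsto UAU^{\mathsf T}\), this is the same class as \(C^{\mathsf T}DC\); the transposes are harmless because \(U^{\mathsf T}\in\GL_m(\Z)\) as well. This is exactly the argument already used in Definition~\ref{def:Xi}, now for arbitrary \(U\in\GL_m(\Z)\) rather than unipotent ones. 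Hence \(\mathcal T_{CU}=\mathcal T_C\), the reverse inclusion following from \(U^{-1}\in\GL_m(\Z)\).

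Next take the left factor \(P\). Here
\[
(PC)^{\mathsf T}D(PC)=C^{\mathsf T}(PDP)C,
\]
and \(PDP=P^2D\) is again positive diagonal, since diagonal matrices commute and the entries are products of positive reals. So every point of \(\mathcal T_{PC}\) lies in \(\mathcal T_C\). Conversely, given \(D\), set \(D'=P^{-2}D\), which is positive diagonal; then \(C^{\mathsf T}DC=(PC)^{\mathsf T}D'(PC)\), so \(\mathcal T_C\subseteq\mathcal T_{PC}\).

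Combining the two steps gives \(\mathcal T_{PCU}=\mathcal T_{PC}=\mathcal T_C\). One bookkeeping point: \(PC\) need not be rational when \(P\) is only real. Since \(\mathcal T_C\) as a set makes sense for any \(C\in\GL_m(\R)\), I will read the lemma with that harmless extension, and both computations above are valid over \(\R\). No step is a genuine obstacle; the only care needed is matching the row/column congruence convention for \(\mathscr S_m\).
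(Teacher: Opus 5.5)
Your proof is correct and follows the paper's argument. The paper writes $(PCU)^{\mathsf T}D(PCU)=U^{\mathsf T}C^{\mathsf T}(P^{\mathsf T}DP)CU$ in one line and uses two facts: $D\mapsto P^{\mathsf T}DP$ is a bijection on positive diagonal matrices, and the outer $U^{\mathsf T}(\cdot)U$ does not change the point of $\mathscr S_m$. That is exactly your two steps combined. Your remark that $PCU$ need not be rational for real $P$ is a fair bookkeeping point the paper passes over silently: $\mathcal T_{PCU}$ has to be read via the obvious extension of the definition to $\GL_m(\R)$, though in the paper's only application $P=N_n^\sharp I_m$ is rational anyway.
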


\begin{proof}
For any positive diagonal matrix \(D\),
\[
(PCU)^{\mathsf T}D(PCU)
=
U^{\mathsf T}C^{\mathsf T}(P^{\mathsf T}DP)CU.
\]
Since \(P\) is positive diagonal, the map \(D\mapsto P^{\mathsf T}DP\) is a
bijection on the set of positive diagonal matrices.  Since
\(U\in \GL_m(\Z)\), the outer factor \(U^{\mathsf T}(\cdot)U\) does not change
the corresponding point of \(\mathscr S_m\).
\end{proof}

The next point is a compactness statement: a rational diagonal leaf cannot wander through a compact part of shape space while its diagonal ratios degenerate. This is the key input behind closedness.

\begin{lemma}[Properness of a rational diagonal translate]
\label{lem:proper-rational-leaf}
Let \(m\ge 2\) and \(C\in \GL_m(\Q)\). Then \(\Phi_C\) is proper.
Equivalently, for every compact subset \(K\subset \mathscr S_m\) there exists
\(M_K\ge 1\) such that
\[
[C^{\mathsf T}DC]\in K
\quad\Longrightarrow\quad
M_K^{-1}\le \frac{s_i}{s_j}\le M_K
\qquad (1\le i,j\le m),
\]
for every positive diagonal matrix \(D=\diag(s_1,\dots,s_m)\).
\end{lemma}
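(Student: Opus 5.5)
We prove the equivalent quantitative statement and then deduce properness from it. Properness means that the preimage of a compact set is compact in $(\R_{>0})^m/\R_{>0}$. The domain is identified with the set of tuples normalized by $s_1=1$, with the topology of $\R_{>0}^{m-1}$. So bounded ratios give relative compactness of $\Phi_C^{-1}(K)$. Continuity of $\Phi_C$ then gives closedness of the preimage, and hence compactness.

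To control the ratios, we use Mahler's compactness criterion in its Gram-matrix form. A subset $K\subset\mathscr S_m$ is relatively compact if and only if, after normalizing representatives $A$ to $\det A=1$, the quantity
\[
\lambda_1(A)=\min_{0\neq x\in\Z^m}x^{\mathsf T}Ax
\]
is bounded below by some $\epsilon_K>0$. For such a lattice, Minkowski's second theorem gives $\lambda_1\cdots\lambda_m\asymp 1$. Together with $\lambda_i\ge\epsilon_K$, this bounds all successive minima above by a constant $M'_K$.

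Now take $A=C^{\mathsf T}DC$ with $D=\diag(s_1,\dots,s_m)$, normalized by $\det(C)^2\prod s_i=1$. Choose $N\in\Z_{>0}$ with $NC^{-1}\in M_m(\Z)$. For each standard basis vector $\epsilon_i$, the vector $x=NC^{-1}\epsilon_i$ lies in $\Z^m\setminus\{0\}$, and
\[
x^{\mathsf T}Ax=N^2\,\epsilon_i^{\mathsf T}D\,\epsilon_i=N^2 s_i .
\]
Hence
\[
s_i\ \ge\ N^{-2}\lambda_1(A)\ \ge\ N^{-2}\epsilon_K
\]
for every $i$. This lower bound is where rationality of $C$ enters. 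Since $\prod s_i=\det(C)^{-2}$ is fixed, the lower bounds on the other $s_j$ give the upper bound
\[
s_i\ \le\ \det(C)^{-2}(N^2/\epsilon_K)^{m-1}.
\]
Therefore every ratio $s_i/s_j$ is bounded above and below by a constant $M_K$ depending only on $K$ and $C$.

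Conversely, suppose the ratios are bounded on a set. Then, after scaling, $D$ ranges over a compact set of positive diagonal matrices, and its image under the continuous map $\Phi_C$ is compact. This gives the two directions of the equivalence.

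The main obstacle is stating the reduction-theory input cleanly, namely Mahler's criterion for $\GL_m(\Z)\backslash\Sym^+_m(\R)/\R_{>0}$ with the determinant normalization. The integrality trick $x=NC^{-1}\epsilon_i$ is the key step, and it fails for irrational $C$, which is consistent with the non-closed leaves one expects in that case.
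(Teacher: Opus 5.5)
Your proof is correct and is essentially the paper's argument. The paper likewise takes an integral vector \(w\), namely the \(i_0\)-th column of \(\operatorname{adj}(qC)\), which is a nonzero rational multiple of \(C^{-1}e_{i_0}\) just like your \(NC^{-1}e_{i_0}\), so that \(s_{i_0}\) is, up to a fixed constant, the value of the form at a nonzero integral vector; it then combines Mahler's criterion with a determinant normalization, phrased by contradiction along a sequence where you give explicit constants.

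Two small remarks. The Minkowski second-theorem step is never used. Your ``conversely'' paragraph is not the reverse implication of the stated equivalence; that direction is immediate anyway, since the continuous ratios \(s_i/s_j\) are bounded on the compact set \(\Phi_C^{-1}(K)\).
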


\begin{proof}
Represent each class \([D]\) by the unique diagonal matrix with \(\det(D)=1\).
Suppose, for contradiction, that there exists a compact set
\(K\subset \mathscr S_m\) and a sequence
\[
D_k=\diag(s_{1,k},\dots,s_{m,k}),
\qquad
\det(D_k)=1,
\]
such that
\[
[C^{\mathsf T}D_kC]\in K
\qquad\text{for all }k,
\]
but the ratios \(s_{i,k}/s_{j,k}\) are unbounded.  After passing to a
subsequence, there is an index \(i_0\) such that
\[
s_{i_0,k}\longrightarrow 0.
\]

Choose an integer \(q\ge 1\) such that
\[
A:=qC\in M_m(\Z).
\]
Let \(w\in \Z^m\) be the \(i_0\)-th column of \(\operatorname{adj}(A)\).  Then
\(w\neq 0\) and
\[
Aw=\det(A)e_{i_0}.
\]
Therefore
\[
Cw=\frac{\det(A)}{q}e_{i_0}.
\]
Set
\[
G_k:=C^{\mathsf T}D_kC.
\]
Then the squared length of the nonzero integral vector \(w\) with respect to
\(G_k\) is
\[
w^{\mathsf T}G_kw
=
w^{\mathsf T}C^{\mathsf T}D_kCw
=
(Cw)^{\mathsf T}D_k(Cw)
=
\left(\frac{\det(A)}{q}\right)^2 s_{i_0,k}
\longrightarrow 0.
\]
On the other hand,
\[
\det(G_k)=\det(C)^2\det(D_k)=\det(C)^2
\]
is independent of \(k\).

After rescaling each \(G_k\) by the fixed scalar \(|\det(C)|^{-2/m}\), we
obtain determinant-one positive-definite quadratic forms representing the same
points of shape space.  Their shortest nonzero vector lengths tend to \(0\).
By Mahler's compactness criterion, their classes leave every compact subset of
\(\mathscr S_m\), contradicting \([G_k]\in K\).  This proves the asserted ratio
bound.

Now let \(K\subset \mathscr S_m\) be compact.  The ratio bound shows that
\(\Phi_C^{-1}(K)\) is contained in a compact subset of
\((\R_{>0})^m/\R_{>0}\): after normalizing by \(\det(D)=1\), all diagonal
entries lie in a common compact interval.  Since \(\Phi_C^{-1}(K)\) is closed,
it is compact.  Hence \(\Phi_C\) is proper.
\end{proof}

Properness has an immediate topological consequence: each rational diagonal leaf is actually a closed subset of shape space.

\begin{corollary}[Closedness of rational diagonal leaves]
\label{cor:closed-rational-leaf}
For every \(m\ge 2\) and every \(C\in \GL_m(\Q)\), the subset
\(\mathcal T_C\subset \mathscr S_m\) is closed.
\end{corollary}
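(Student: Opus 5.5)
The closedness of \(\mathcal T_C\) will follow from the general fact that a proper continuous map into a locally compact Hausdorff space has closed image. Concretely, we combine Lemma~\ref{lem:proper-rational-leaf} with the fact that \(\mathscr S_m\) is first countable and Hausdorff, so every point has a compact neighbourhood. We first recall that \(\mathscr S_m\) is the quotient of \(\Sym_m^+(\R)/\R_{>0}\) by the properly discontinuous congruence action of \(\GL_m(\Z)\). Hence it is Hausdorff and locally compact: every point has a compact neighbourhood, for instance the image of a small closed ball in the determinant-one slice. We also note that \(\Phi_C\) is continuous, being induced from the continuous map \(D\mapsto C^{\mathsf T}DC\) followed by the quotient projection.

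Next, we take a point \(x\in\overline{\mathcal T_C}\) and a sequence \([D_k]\) in \((\R_{>0})^m/\R_{>0}\) with \(\Phi_C([D_k])\to x\). Such a sequence exists because \(\mathscr S_m\) is metrizable, or at least first countable, as a quotient of a manifold by a properly discontinuous action. We then choose a compact neighbourhood \(K\) of \(x\). For all large \(k\) we have \(\Phi_C([D_k])\in K\). By Lemma~\ref{lem:proper-rational-leaf}, the classes \([D_k]\) then lie in the compact set \(\Phi_C^{-1}(K)\); equivalently, after normalizing \(\det D_k=1\), all diagonal entries lie in a fixed compact interval of \(\R_{>0}\).

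We then pass to a convergent subsequence \([D_k]\to[D_\infty]\) with \(D_\infty\) positive diagonal. By continuity, \(\Phi_C([D_\infty])=\lim\Phi_C([D_k])=x\), since limits are unique in the Hausdorff space \(\mathscr S_m\). Thus \(x\in\mathcal T_C\), which proves that \(\mathcal T_C\) is closed.

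The only point that needs care is the topological input: that \(\mathscr S_m\) is Hausdorff and locally compact, so that properness upgrades to closed image. I would justify this directly from the proper discontinuity of the \(\GL_m(\Z)\)-action on the symmetric space, which is standard reduction theory. The main analytic content, namely that ratios cannot degenerate inside a compact set, is already supplied by Lemma~\ref{lem:proper-rational-leaf}.
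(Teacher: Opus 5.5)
Your proposal is correct and follows essentially the same route as the paper. The paper simply invokes the fact that a proper map between locally compact Hausdorff spaces is closed, applied to \(\Phi_C\) via Lemma~\ref{lem:proper-rational-leaf}; your sequential argument, using first countability and compact neighbourhoods in \(\mathscr S_m\), just unpacks that general fact.
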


\begin{proof}
A proper map between locally compact Hausdorff spaces is closed.  Since
\(\mathcal T_C=\operatorname{im}(\Phi_C)\), the claim follows from
Lemma~\ref{lem:proper-rational-leaf}.
\end{proof}

The previous topological statements become more conceptual once one identifies these leaves with projected flats attached to \(\Q\)-split tori. The next theorem places the whole picture inside the symmetric-space geometry of \(\GL_m\).

\begin{theorem}[Rational diagonal leaves are projected rational flats]
\label{thm:rational-leaf-projected-flat}
Let \(m\ge 2\), let
\[
\Gamma_m:=\GL_m(\Z),\qquad
G_m:=\GL_m(\R),\qquad
K_m:=\GO_m(\R),\qquad
X_m:=G_m/K_m,
\]
and let \(x_0:=K_m\in X_m\).  Let
\[
A_m^1
:=
\{\diag(t_1,\dots,t_m): t_i>0,\ \prod_{i=1}^m t_i=1\}
\subset G_m.
\]
For \(C\in \GL_m(\Q)\), set \(g:=C^{\mathsf T}\) and
\[
F_C:=gA_m^1x_0\subset X_m.
\]
Then:
\begin{enumerate}[label=\textup{(\roman*)},leftmargin=2.8em]
\item Under the identification
\[
\mathscr S_m\simeq \Gamma_m\backslash X_m,
\qquad
hK_m\longmapsto [hh^{\mathsf T}],
\]
one has
\[
\mathcal T_C=\pi(F_C),
\]
where \(\pi:X_m\to \Gamma_m\backslash X_m\) is the quotient map.

\item
The torus
\[
S_C:=gA_m^1g^{-1}
\]
is a maximal \(\Q\)-split torus of \(\mathrm{SL}_{m,\Q}\).  Equivalently, its
image in \(\mathrm{PGL}_{m,\Q}\) is a maximal \(\Q\)-split torus of
\(\mathrm{PGL}_{m,\Q}\).  Moreover
\[
F_C=S_C\cdot (g x_0),
\]
so \(F_C\) is a rational maximal flat in \(X_m\).

\item If
\[
\Gamma_{F_C}:=\{\gamma\in \Gamma_m:\ \gamma F_C=F_C\},
\]
then \(\Gamma_{F_C}\) acts properly discontinuously on \(F_C\), and
\(\pi|_{F_C}\) factors through a real-analytic totally geodesic map
\[
\bar\iota_C:\Gamma_{F_C}\backslash F_C\longrightarrow \mathscr S_m
\]
whose image is exactly \(\mathcal T_C\). In particular, \(\mathcal T_C\) is
closed.
\end{enumerate}
\end{theorem}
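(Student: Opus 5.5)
For (i), I will unwind the identification directly. A point \(hK_m\in X_m\) maps to \([hh^{\mathsf T}]\in \mathscr S_m\), and the quotient by \(\Gamma_m\) acting on the left matches the congruence action \(A\mapsto UAU^{\mathsf T}\). Take \(a=\diag(t_i)\in A_m^1\) and \(g=C^{\mathsf T}\). The point \(ga x_0\) then maps to
\[
(C^{\mathsf T}a)(C^{\mathsf T}a)^{\mathsf T}=C^{\mathsf T}a^2C,
\]
and \(a^2\) is a positive diagonal matrix of determinant one. The squaring map is a bijection of \(A_m^1\). Every positive diagonal \(D\) equals a positive scalar times a determinant-one diagonal matrix, and scalars are invisible in \(\mathscr S_m\). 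Hence \(\pi(F_C)=\mathcal T_C\).

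For (ii), I will note that \(A_m^1\) is the identity component of the real points of the standard diagonal torus \(A\) of \(\mathrm{SL}_m\), which is defined and split over \(\Q\). Since \(g=C^{\mathsf T}\in\GL_m(\Q)\), conjugation by \(g\) is a \(\Q\)-automorphism of \(\mathrm{SL}_{m,\Q}\). It therefore carries \(A\) to a \(\Q\)-split torus \(S_C=gAg^{-1}\) of the same dimension \(m-1\), which is the \(\Q\)-rank of \(\mathrm{SL}_m\), so \(S_C\) is maximal. The statement about \(\mathrm{PGL}_m\) follows because the projection \(\mathrm{SL}_m\to\mathrm{PGL}_m\) is a central isogeny defined over \(\Q\), and it preserves splitness and maximality of tori. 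The identity \(F_C=gA_m^1x_0=(gA_m^1g^{-1})gx_0\) is tautological. The orbit of a point under the identity component of the real points of a maximal \(\R\)-split torus is a maximal flat of \(X_m\). Since the torus is \(\Q\)-split, this flat is rational, and I will take that as the definition of ``rational maximal flat.''

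For (iii), I will first show that \(\Gamma_{F_C}\) acts properly discontinuously on \(F_C\). This holds because \(\Gamma_m\) acts properly discontinuously on \(X_m\): the stabilizers in \(\Gamma_m\) of points are finite, and it is a discrete subgroup acting on a space with compact stabilizers \(K_m\). Restricting to the closed invariant subset \(F_C\) keeps the action properly discontinuous. Since \(\pi|_{F_C}\) is \(\Gamma_{F_C}\)-invariant, it factors through a map \(\bar\iota_C\) on \(\Gamma_{F_C}\backslash F_C\), and its image is \(\pi(F_C)=\mathcal T_C\) by (i). Real-analyticity and total geodesy are local statements. Locally, \(\pi\) is the quotient by a properly discontinuous isometric action, so it is a local isometry away from orbifold points, and \(F_C\) is totally geodesic in \(X_m\). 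Closedness of \(\mathcal T_C\) is already Corollary~\ref{cor:closed-rational-leaf}. Alternatively, I can deduce it from properness of \(\Phi_C\) (Lemma~\ref{lem:proper-rational-leaf}), since \(\Phi_C\) is the composite of the real-analytic parametrization \((\R_{>0})^m/\R_{>0}\cong A_m^1\to F_C\) with \(\pi\).

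The main obstacle is the precise sense in which \(\bar\iota_C\) is ``totally geodesic'' on an orbifold, together with the claim that \(F_C\) really is a maximal flat. For the latter I will verify that \(A_m^1x_0\) is the standard maximal flat, namely the positive definite diagonal matrices of determinant one, which is totally geodesic for the invariant metric \(\operatorname{tr}((P^{-1}dP)^2)\) because it is the exponential of a maximal abelian subspace of \(\mathfrak p\). Translation by the isometry \(g\) then preserves these properties. I will interpret ``totally geodesic map'' orbifold-locally, meaning that on lifts to \(X_m\) it is the inclusion of a totally geodesic submanifold.
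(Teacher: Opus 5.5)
Your proposal is correct and follows the paper's proof essentially step for step: the computation $\pi(gax_0)=[C^{\mathsf T}a^2C]$ for (i), $\Q$-conjugation of the standard diagonal torus for (ii), and, for (iii), restricting the properly discontinuous $\Gamma_m$-action to $F_C$ and citing Corollary~\ref{cor:closed-rational-leaf} for closedness. One caution, which applies equally to the paper's wording: the orbit of a maximal $\R$-split torus through an \emph{arbitrary} point need not be a flat (for $m=2$, in the upper half-plane model the diagonal orbit of $1+i$ is a hypercycle, not a geodesic), so what actually proves maximality in (ii) is your final observation that $F_C=g\cdot(A_m^1x_0)$ is an isometric translate of the standard flat; likewise $\GO_m(\R)\supset\R_{>0}I$ is not compact, so you should argue proper discontinuity on $X_m\simeq \mathrm{PGL}_m(\R)/\mathrm{PO}_m(\R)$.
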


\begin{proof}
Under the identification
\(X_m\simeq \Sym_m^+(\R)/\R_{>0}\), a point \(hK_m\) corresponds to
\([hh^{\mathsf T}]\). Thus for \(a\in A_m^1\),
\[
\pi(ga x_0)
=
[ga^2g^{\mathsf T}]
=
[C^{\mathsf T}a^2C].
\]
As \(a\) ranges over \(A_m^1\), the matrix \(a^2\) ranges over the positive
diagonal matrices of determinant \(1\), whose projective classes are precisely
the projective classes of all positive diagonal matrices. Hence
\(\pi(F_C)=\mathcal T_C\), proving \textup{(i)}.

For \textup{(ii)}, \(A_m^1\) is the standard diagonal torus of
\(\mathrm{SL}_{m,\Q}\), hence a maximal \(\Q\)-split torus.  Since
\(g\in\GL_m(\Q)\), conjugation by \(g\) is a \(\Q\)-automorphism of
\(\mathrm{SL}_{m,\Q}\), so
\[
S_C:=gA_m^1g^{-1}
\]
is again a maximal \(\Q\)-split torus.  Passing to the adjoint quotient gives
the corresponding statement for \(\mathrm{PGL}_{m,\Q}\).  Finally,
\[
S_C\cdot(gx_0)=gA_m^1x_0=F_C.
\]
Since
\[
X_m=\GL_m(\R)/\GO_m(\R)
\simeq
\mathrm{PGL}_m(\R)/\mathrm{PO}_m(\R),
\]
the orbit of a maximal \(\R\)-split torus through any point of \(X_m\) is a
maximal flat.  Thus \(F_C\) is a rational maximal flat.

For \textup{(iii)}, since \(\Gamma_m\) acts properly discontinuously on \(X_m\),
the subgroup \(\Gamma_{F_C}\) acts properly discontinuously on the closed
totally geodesic flat \(F_C\). The restriction \(\pi|_{F_C}\) is
\(\Gamma_{F_C}\)-invariant, so it descends to a real-analytic map
\[
\bar\iota_C:\Gamma_{F_C}\backslash F_C\longrightarrow \mathscr S_m.
\]
Because \(F_C\) is totally geodesic in \(X_m\), this descended map is totally
geodesic, and its image is
\[
\bar\iota_C(\Gamma_{F_C}\backslash F_C)=\pi(F_C)=\mathcal T_C
\]
by \textup{(i)}. Closedness was already proved in
Corollary~\ref{cor:closed-rational-leaf}.
\end{proof}

\begin{remark}[Why the full stabilizer is necessary]
\label{rem:full-stabilizer-necessary}
The quotient in
Theorem~\ref{thm:rational-leaf-projected-flat} must be taken by the full
stabilizer of the flat, not merely by its translation subgroup.  For example,
when \(m=2\) and \(C=I_2\), the permutation matrix
\[
\begin{pmatrix}0&1\\ 1&0\end{pmatrix}\in \GL_2(\Z)
\]
preserves the standard diagonal flat and identifies \(a\) with \(a^{-1}\),
although there is no nontrivial translation lattice in that rank-one example.
\end{remark}

Even without Hypothesis~\textup{(H)}, the pure locus is already leafwise rather than ambient. The next proposition is the unconditional support statement.

\begin{proposition}[Unconditional rational-leaf support]
\label{prop:ambient-rational-leaf-support}
Let
\[
\mathcal S_{n,\mathrm{adm}}^{\mathrm{pure}}
:=
\{\operatorname{sh}(K_a): a\text{ admissible}\}
\subset \mathscr S_{n-1},
\]
and
\[
\mathcal L_n^{\mathrm{rat}}
:=
\bigcup_{C\in \GL_{n-1}(\Q)}\mathcal T_C
\subset \mathscr S_{n-1}.
\]
Then
\[
\mathcal S_{n,\mathrm{adm}}^{\mathrm{pure}}
\subset
\mathcal L_n^{\mathrm{rat}}.
\]
Consequently, the admissible pure-field shape locus is contained in a countable
union of closed rational diagonal leaves.
\end{proposition}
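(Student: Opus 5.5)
The plan is to use Lemma~\ref{lem:all-gram-factorization}, which holds for every admissible \(a\) without Hypothesis~\textup{(H)}. Fix an admissible \(a\) and choose any \(\Z\)-basis \(\eta=(\eta_1,\dots,\eta_{n-1})\) of \(\cO_{K_a}^\perp\); such a basis exists by Lemma~\ref{lem:exact-sequence}. The Gram matrix of \(J(\eta)\) computes \(\operatorname{sh}(K_a)\) directly from Definition~\ref{def:minkowski-shape}. Lemma~\ref{lem:all-gram-factorization} supplies a unique \(A_\eta(a)\in\GL_{n-1}(\Q)\) such that
\[
G_\eta(a)=A_\eta(a)^{\mathsf T}\bigl(n\Delta(a)\bigr)A_\eta(a).
\]
Here \(A_\eta(a)\) is rational because both \(\eta\) and \((e_1,\dots,e_{n-1})\) are \(\Q\)-bases of \(K_a^0\). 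The inputs are Corollary~\ref{cor:diag-gram} for orthogonality and trace zero, and Proposition~\ref{prop:integrality-em}, which is not even needed since only rationality of the transition matters.

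The scalar \(n\) is invisible in \(\mathscr S_{n-1}\), and \(n\Delta(a)\) is positive diagonal. It follows that
\[
\operatorname{sh}(K_a)=[C^{\mathsf T}DC]
\]
with \(C:=A_\eta(a)\) and \(D:=\Delta(a)\), so \(\operatorname{sh}(K_a)\in\mathcal T_C\subset\mathcal L_n^{\mathrm{rat}}\). This proves the inclusion. The only point to check is the identification of \(\mathscr S_{n-1}\) with Gram matrices modulo \(\GL_{n-1}(\Z)\)-congruence and positive scaling. Under the row-vector convention, the Gram matrix of a basis \(g\) is \(gg^{\mathsf T}\), which is exactly the identification recorded after Definition~\ref{def:shape-space}.

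For the consequence, \(\GL_{n-1}(\Q)\) is countable, so the union defining \(\mathcal L_n^{\mathrm{rat}}\) is a countable union. Each \(\mathcal T_C\) is closed by Corollary~\ref{cor:closed-rational-leaf}, equivalently by Theorem~\ref{thm:rational-leaf-projected-flat}\textup{(iii)}, applied with \(m=n-1\ge 2\) since \(n\ge 3\).

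I do not expect a real obstacle. The mildest subtlety is making sure no use of Hypothesis~\textup{(H)} or of the normalized integral basis sneaks in; this is why I would use an arbitrary \(\Z\)-basis of \(\cO_{K_a}^\perp\) together with Lemma~\ref{lem:all-gram-factorization}, rather than Theorem~\ref{thm:factorization}.
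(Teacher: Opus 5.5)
Your proposal is correct and follows the paper's proof essentially verbatim. You apply Lemma~\ref{lem:all-gram-factorization} to an arbitrary $\Z$-basis of $\cO_{K_a}^\perp$ and drop the scalar $n$ to get $\operatorname{sh}(K_a)\in\mathcal T_{A_\eta(a)}$. You then conclude from the countability of $\GL_{n-1}(\Q)$ and Corollary~\ref{cor:closed-rational-leaf}.
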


\begin{proof}
Fix an admissible parameter \(a\), and choose a \(\Z\)-basis
\(\eta_1,\dots,\eta_{n-1}\) of \(\cO_{K_a}^{\perp}\).
By Lemma~\ref{lem:all-gram-factorization}, there exists
\[
A(a)\in \GL_{n-1}(\Q)
\]
such that the Gram matrix of
\[
J(\eta_1),\dots,J(\eta_{n-1})
\]
is
\[
A(a)^{\mathsf T}\bigl(n\Delta(a)\bigr)A(a),
\qquad
\Delta(a):=\diag\bigl(s_1(a),\dots,s_{n-1}(a)\bigr).
\]
Since the scalar factor \(n\) is irrelevant in shape space, this shows that
\[
\operatorname{sh}(K_a)\in \mathcal T_{A(a)}.
\]
Hence
\[
\mathcal S_{n,\mathrm{adm}}^{\mathrm{pure}}
\subset
\mathcal L_n^{\mathrm{rat}}.
\]
Because \(\GL_{n-1}(\Q)\) is countable, \(\mathcal L_n^{\mathrm{rat}}\) is a
countable union of the leaves \(\mathcal T_C\), and each \(\mathcal T_C\) is
closed by Corollary~\ref{cor:closed-rational-leaf}.
\end{proof}

Recall that a full-rank matrix \(H=(h_{ij})\in M_m(\Z)\) is in
\emph{right Hermite normal form} if:
\begin{enumerate}[label=\textup{(\roman*)},leftmargin=2.6em]
\item \(H\) is upper triangular;
\item \(h_{ii}>0\) for all \(i\);
\item \(0\le h_{ij}<h_{ii}\) for all \(i<j\).
\end{enumerate}
Every full-rank integer matrix \(A\in M_m(\Z)\) admits a unique
\(U\in \GL_m(\Z)\) such that \(AU\) is in right Hermite normal form.

This elementary finiteness lemma is the combinatorial reason that a uniform denominator bound translates into only finitely many leaf types.

\begin{lemma}[Finiteness of bounded-determinant right Hermite normal forms]
\label{lem:finite-HNF-bounded-det}
Fix integers \(m\ge 1\) and \(B\ge 1\). Then there are only finitely many
full-rank right Hermite normal form matrices \(H\in M_m(\Z)\) satisfying
\[
\det(H)\le B.
\]
\end{lemma}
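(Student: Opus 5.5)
The plan is to bound every entry of \(H\) in terms of \(B\) and then count. Let \(H=(h_{ij})\) be a full-rank right Hermite normal form matrix with \(\det(H)\le B\). Since \(H\) is upper triangular, we have
\[
\det(H)=\prod_{i=1}^m h_{ii}.
\]
Each \(h_{ii}\) is a positive integer, so each is at least \(1\). Hence \(1\le h_{ii}\le \det(H)\le B\) for every \(i\), and the whole diagonal \((h_{11},\dots,h_{mm})\) lies in \(\{1,\dots,B\}^m\). In fact the diagonal is an ordered factorization of the integer \(\det(H)\le B\) into \(m\) positive factors, but the crude box bound is all we need.

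Next, fix the diagonal and bound the off-diagonal entries. Condition (iii) of the right Hermite normal form says \(0\le h_{ij}<h_{ii}\) for \(i<j\), so each such entry takes at most \(h_{ii}\le B\) values. Condition (i) forces \(h_{ij}=0\) for \(i>j\). So for a given diagonal there are at most
\[
\prod_{i<j} h_{ii}\le B^{m(m-1)/2}
\]
choices for the remaining entries.

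Combining the two counts, the number of such \(H\) is at most \(B^m\cdot B^{m(m-1)/2}\), which is finite. There is no real obstacle; the only point needing care is to use the positivity and integrality of the diagonal entries to get \(h_{ii}\le\det H\). Determinant positivity is automatic here because the diagonal entries are positive.
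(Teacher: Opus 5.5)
Your proposal is correct and follows essentially the same route as the paper's proof. Both arguments use $\det(H)=\prod_i h_{ii}$ with positive integer diagonal entries to see that only finitely many diagonals are possible, and then use $0\le h_{ij}<h_{ii}$ to see that only finitely many off-diagonal completions exist for each diagonal. Your explicit count $B^m\cdot B^{m(m-1)/2}$ is just a quantified version of the same argument.
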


\begin{proof}
Since \(H\) is upper triangular with positive diagonal,
\[
\det(H)=\prod_{i=1}^m h_{ii}.
\]
Thus the diagonal entries form an \(m\)-tuple of positive integers with product
at most \(B\), and there are only finitely many such tuples.

Once the diagonal is fixed, each off-diagonal entry \(h_{ij}\) with \(i<j\)
belongs to the finite set \(\{0,1,\dots,h_{ii}-1\}\). Hence there are only
finitely many such matrices.
\end{proof}

We can now strengthen countable rational-leaf support to finite leaf support under Hypothesis~\textup{(H)}. The key inputs are the uniform denominator bound and reduction to Hermite normal form.

\begin{theorem}[Finite rational-leaf containment under Hypothesis~\textup{(H)}]
\label{thm:countable-thinness-fixed-leaf}
Assume Hypothesis~\textup{(H)} and define
\[
\mathcal S_n^{\mathrm{pure},H}
:=
\{\operatorname{sh}(K_a): a\text{ admissible and satisfying \textup{(H)}}\}
\subset \mathscr S_{n-1}.
\]
Then there exists a finite set
\[
\mathcal H_n\subset M_{n-1}(\Z)
\]
of full-rank right Hermite normal form matrices, depending only on \(n\), such
that
\[
\mathcal S_n^{\mathrm{pure},H}
\subset
\bigcup_{H\in \mathcal H_n}\mathcal T_H.
\]
Moreover, each \(\mathcal T_H\) is closed in \(\mathscr S_{n-1}\), and is the
image of a real-analytic map from an \((n-2)\)-dimensional manifold.
\end{theorem}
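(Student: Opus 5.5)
The plan is to combine the factorization (Theorem~\ref{thm:factorization}), the uniform denominator bound (Proposition~\ref{prop:uniform-denominator}), and the leaf invariance of Lemma~\ref{lem:leaf-equivalence-PCU}. Fix an admissible \(a\) satisfying (H). By Theorem~\ref{thm:factorization}, \(\operatorname{sh}(K_a)\in\mathcal T_{C(a)}\). Put \(N:=N_n^\sharp\). Then \(A(a):=N\,C(a)\in M_{n-1}(\Z)\) by Proposition~\ref{prop:uniform-denominator}, and \(A(a)\) has full rank. Since \(NI\) is positive diagonal, Lemma~\ref{lem:leaf-equivalence-PCU} with \(P=NI\) and \(U=I\) gives \(\mathcal T_{A(a)}=\mathcal T_{C(a)}\).

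Next I reduce \(A(a)\) to Hermite normal form. Let \(U\in\GL_{n-1}(\Z)\) be the unique matrix with \(H(a):=A(a)U\) in right Hermite normal form. Lemma~\ref{lem:leaf-equivalence-PCU} again gives \(\mathcal T_{H(a)}=\mathcal T_{A(a)}\ni\operatorname{sh}(K_a)\).

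To bound the determinant, use Lemma~\ref{lem:C-matrix}(b):
\[
|\det H(a)|=|\det A(a)|=N^{n-1}\prod_{m}D_m(a)^{-1}\le N^{n-1}.
\]
This bound depends only on \(n\). Lemma~\ref{lem:finite-HNF-bounded-det} with \(B=N^{n-1}\) then shows that \(H(a)\) ranges over a finite set. Take \(\mathcal H_n\) to be the set of all full-rank right HNF matrices with determinant at most \(N^{n-1}\); this set depends only on \(n\).

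For the remaining assertions, closedness of each \(\mathcal T_H\) is Corollary~\ref{cor:closed-rational-leaf}, since \(H\in\GL_{n-1}(\Q)\). The manifold statement follows from Theorem~\ref{thm:rational-leaf-projected-flat}. The map \(\Phi_H\) is real-analytic on \((\R_{>0})^{n-1}/\R_{>0}\cong\R^{n-2}\), which is an \((n-2)\)-dimensional manifold, and its image is \(\mathcal T_H\). Equivalently, one can use \(F_H\cong A^1_{n-1}\), which has dimension \(n-2\).

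I expect no serious obstacle. The only point needing care is that rescaling by the scalar \(N\) is a legitimate instance of the operation \(P\) in Lemma~\ref{lem:leaf-equivalence-PCU}, and that passing to the HNF representative multiplies on the right by an element of \(\GL_{n-1}(\Z)\). Both are direct from that lemma.
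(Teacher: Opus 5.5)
Your proposal is correct and follows essentially the same route as the paper. You clear denominators with \(N_n^\sharp\) via Proposition~\ref{prop:uniform-denominator}, pass to the right Hermite normal form using Lemma~\ref{lem:leaf-equivalence-PCU} twice, bound \(1\le\det H(a)\le (N_n^\sharp)^{n-1}\) by Lemma~\ref{lem:C-matrix}(b), and conclude with Lemma~\ref{lem:finite-HNF-bounded-det}, Corollary~\ref{cor:closed-rational-leaf}, and the parametrization \([D]\mapsto[H^{\mathsf T}DH]\). The only cosmetic difference is that you take \(\mathcal H_n\) to be all right Hermite normal forms of determinant at most \((N_n^\sharp)^{n-1}\), rather than only those that actually occur, which makes the dependence on \(n\) alone slightly more transparent.
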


\begin{proof}
Put \(m:=n-1\) and
\[
N_n^\sharp:=\prod_{p^e\parallel n} p^{\,e+n-2}.
\]
Let \(a\) be admissible and satisfy Hypothesis~\textup{(H)}.
By Proposition~\ref{prop:uniform-denominator},
\[
R(a):=N_n^\sharp C(a)\in M_m(\Z).
\]
Since \(N_n^\sharp I_m\) is a positive diagonal matrix,
Lemma~\ref{lem:leaf-equivalence-PCU} gives
\[
\mathcal T_{R(a)}=\mathcal T_{C(a)}.
\]

Choose \(U(a)\in \GL_m(\Z)\) such that
\[
H(a):=R(a)U(a)
\]
is in right Hermite normal form. Again by
Lemma~\ref{lem:leaf-equivalence-PCU},
\[
\mathcal T_{H(a)}=\mathcal T_{R(a)}=\mathcal T_{C(a)}.
\]

Moreover,
\[
\det H(a)=\det R(a)=(N_n^\sharp)^m\det C(a).
\]
By Lemma~\ref{lem:C-matrix}\textup{(b)},
\[
\det C(a)=\prod_{j=1}^m D_j(a)^{-1}.
\]
Since each \(D_j(a)\) is a positive integer, one has
\[
0<\det C(a)\le 1.
\]
Hence
\[
1\le \det H(a)\le (N_n^\sharp)^m.
\]
Lemma~\ref{lem:finite-HNF-bounded-det} therefore shows that only finitely many
such Hermite normal forms can occur. Let \(\mathcal H_n\) be the resulting
finite set. Then
\[
\mathcal T_{C(a)}=\mathcal T_{H(a)}
\qquad\text{for some }H(a)\in \mathcal H_n,
\]
which proves the containment.

Finally, closedness follows from Corollary~\ref{cor:closed-rational-leaf}, and
the real-analytic parametrization is
\[
(\R_{>0})^m/\R_{>0}\longrightarrow \mathscr S_m,
\qquad
[D]\longmapsto [H^{\mathsf T}DH],
\]
whose domain has dimension \(m-1=n-2\).
\end{proof}

The sextic example separates normalized strata, but it does not yet separate the underlying rational leaves. The following remark clarifies exactly what the current argument proves and what it leaves open.

\begin{remark}
\label{rem:sextic-same-leaf}
Example~\ref{prop:counterexample-C} proves that
\[
S(10)=S(550)
\qquad\text{but}\qquad
\Xi(10)\neq \Xi(550).
\]
Thus the periodic datum \(S(a)\) does not determine the normalized
stratum \(\Xi(a)\).

It does \emph{not} follow from the present argument that the corresponding
rational diagonal leaves are equal or different. Indeed, after clearing
denominators by \(3\),
\[
3C(10)=
\begin{pmatrix}
3&0&0&0&1\\
0&3&0&10&0\\
0&0&3&0&10\\
0&0&0&1&0\\
0&0&0&0&1
\end{pmatrix},
\qquad
3C(550)=
\begin{pmatrix}
3&0&0&0&5\\
0&3&0&2750&0\\
0&0&3&0&13750\\
0&0&0&1&0\\
0&0&0&0&1
\end{pmatrix}.
\]
The unique right Hermite normal forms of these two matrices are
\[
H_{10}=
\begin{pmatrix}
3&0&0&0&1\\
0&3&0&1&0\\
0&0&3&0&1\\
0&0&0&1&0\\
0&0&0&0&1
\end{pmatrix},
\qquad
H_{550}=
\begin{pmatrix}
3&0&0&0&2\\
0&3&0&2&0\\
0&0&3&0&1\\
0&0&0&1&0\\
0&0&0&0&1
\end{pmatrix},
\]
which are different. Therefore the previous right-Hermite-normal-form argument
does not prove
\[
\mathcal T_{C(10)}=\mathcal T_{C(550)}.
\]
Accordingly, in this paper we use Example~\ref{prop:counterexample-C} only for
the statement
\[
S(a)\not\Rightarrow \Xi(a).
\]
Whether \(\mathcal T_{C(10)}\) and \(\mathcal T_{C(550)}\) coincide is left
open here.
\end{remark}

At this point the counting picture becomes transparent. On a fixed normalized stratum, shape motion and discriminant growth are governed by disjoint sets of variables.

\begin{proposition}[Fixed-stratum reduction to ratios and products]
\label{prop:fixed-leaf-counting-mechanism}
Assume Hypothesis~\textup{(H)}.
Fix a normalized stratum
\[
\Xi\in \GL_{n-1}(\Q)/U_{n-1}^+(\Z),
\]
choose a representative \(C\in \GL_{n-1}(\Q)\) of \(\Xi\), fix a discrete
label \(\delta\) as in \eqref{eq:delta-fixed-leaf}, and fix a value
\(\kappa>0\) of the periodic discriminant factor \(\kappa_n(a)\).
For every admissible parameter \(a\) satisfying
\[
\Xi(a)=\Xi,\qquad
\delta(a)=\delta,\qquad
\kappa_n(a)=\kappa,
\]
one has
\begin{equation}\label{eq:fixed-leaf-shape-ratio}
\operatorname{sh}(K_a)
=
\bigl[C^{\mathsf T}D\bigl(\rho(a);\delta\bigr)C\bigr]
\in \mathcal F_{\Xi,\delta},
\end{equation}
and
\begin{equation}\label{eq:fixed-leaf-disc-product}
|\disc(K_a)|\le X
\iff
\prod_{j\in \mathcal J_{\mathrm{cop}}}P_j(a)
\le
\left(\frac{X}{\kappa\,G(\delta)}\right)^{1/(n-1)}.
\end{equation}
Thus, on a fixed normalized stratum and after fixing the non-coprime discrete
data, the shape depends only on the ratio variables \(\rho(a)\), whereas the
discriminant bound depends only on the product variables \(P_j(a)\).
\end{proposition}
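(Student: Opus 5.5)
The proposition is a direct assembly of results already proved. I would treat the shape statement \eqref{eq:fixed-leaf-shape-ratio} and the discriminant statement \eqref{eq:fixed-leaf-disc-product} separately, and then read off the closing sentence from the explicit formulas.

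For \eqref{eq:fixed-leaf-shape-ratio}, since \(\Xi(a)=\Xi\), the chosen representative \(C\) of \(\Xi\) is also a representative of \(\Xi(a)\). Proposition~\ref{prop:shape-on-fixed-leaf} therefore gives
\[
\operatorname{sh}(K_a)=\bigl[C^{\mathsf T}D(\rho(a);\delta(a))C\bigr].
\]
Substituting \(\delta(a)=\delta\) yields the displayed equality. Membership in \(\mathcal F_{\Xi,\delta}\) then follows from definition \eqref{eq:F-Xi-delta-lite}, because \(\rho(a)\in(\R_{>0})^d\). The only thing to check is that \(\mathcal F_{\Xi,\delta}\) is defined via the same representative \(C\). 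If it were defined with another representative \(C'=CU\), with \(U\in U_{n-1}^+(\Z)\), the two sets agree pointwise in \(\mathscr S_{n-1}\), by the same congruence argument as in Definition~\ref{def:translated-diagonal-stratum}.

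For \eqref{eq:fixed-leaf-disc-product}, I would invoke Corollary~\ref{cor:disc-product-constraint-fixed-leaf} with \(\kappa_n(a)=\kappa\) and \(\delta(a)=\delta\). Concretely, Proposition~\ref{prop:disc-pairs-only-products-fixed-leaf} gives
\[
|\disc(K_a)|=\kappa\,G(\delta)\prod_{j\in\mathcal J_{\mathrm{cop}}}P_j(a)^{n-1}.
\]
Here \(\kappa>0\), and \(G(\delta)\ge 1\) is a product of positive integers. So dividing by \(\kappa G(\delta)\) and taking the \((n-1)\)th root, which is monotone on \(\R_{>0}\), gives the stated equivalence.

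The concluding sentence is then a reading of the two formulas. \(D(\rho;\delta)\) involves only \(\rho\) and \(\delta\), by \eqref{eq:D-rho-delta-fixed-leaf}. The right-hand side of the discriminant identity involves only the products \(P_j\), together with the fixed quantities \(\kappa\) and \(\delta\).

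I expect no genuine obstacle. The one point needing care is that the hypothesis fixes \(\Xi\) rather than the specific matrix \(C(a)\), so the representative-independence from Lemma~\ref{lem:Xi-well-defined} has to be cited explicitly.
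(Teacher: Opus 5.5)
Your proposal is correct and follows the paper's own proof. The paper likewise obtains \eqref{eq:fixed-leaf-shape-ratio} directly from Proposition~\ref{prop:shape-on-fixed-leaf} and \eqref{eq:fixed-leaf-disc-product} from Corollary~\ref{cor:disc-product-constraint-fixed-leaf}; your remarks on representative-independence and on the positivity of $\kappa\,G(\delta)$ simply make explicit what those cited results already contain.
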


\begin{proof}
Equation \eqref{eq:fixed-leaf-shape-ratio} is
Proposition~\ref{prop:shape-on-fixed-leaf}, and
\eqref{eq:fixed-leaf-disc-product} is
Corollary~\ref{cor:disc-product-constraint-fixed-leaf}.
\end{proof}

\begin{remark}
\label{rem:low-degree-leafwise-mechanism}
Proposition~\ref{prop:fixed-leaf-counting-mechanism} isolates the common
mechanism behind the currently known distribution results for pure fields.
On a fixed normalized stratum, the archimedean variation is carried by the
ratio variables, the discriminant contributes only the hyperbolic product
constraint, and the remaining arithmetic survives as finitely many discrete
labels and congruence conditions.  In low degrees the explicit arithmetic types
are sharp enough that these strata coincide with the torus-orbit pieces on
which the limiting measures live.
\end{remark}

\subsection{Further directions}\label{sec:outlook-open-problems}

The results of the present paper stop at the level of structure and support.
Three natural problems remain.
\begin{enumerate}
    \item \emph{Removing Hypothesis~\textup{(H)}.}
Under Hypothesis~\textup{(H)} we have normalized integral bases, the
factorization through \(C(a)\), and finite rational-leaf support.  The first
next step is to absorb the bad primes \(p\mid n\) with \(p\mid v_p(a)\) into
refined finite-place data defined for every admissible parameter.  One expects
an unconditional factorization
\[
\sh(K_a)=\bigl[(C^\sharp(a))^{\mathsf T}D^\sharp(a)C^\sharp(a)\bigr]
\]
together with the same paired-coordinate ratio/product separation as in the
present paper.  Proving this would upgrade the unconditional countable-leaf
support theorem to unconditional finite leaf support in every fixed degree.
\item \emph{Counting and equidistribution on fixed leaves.}
Section~\ref{sec:paired-rational-leaves} shows that on a fixed normalized
stratum, after the non-coprime discrete data are fixed, the shape depends only
on ratio variables whereas discriminant bounds depend only on independent
product variables.  This identifies the natural counting problem: one should
count admissible parameters on each realized leaf, or on each fixed-stratum
slice inside that leaf, with product variables subject to the hyperbolic
discriminant constraint.  The missing theorem is a leafwise or stratumwise
equidistribution statement, with explicit weights coming from finite local
densities.  The present paper provides the geometric support on which such a
counting theorem would live; it does not yet supply the counting theorem
itself.
\item \emph{Intrinsic reformulation.}
The data \(S(a)\) and \(\Xi(a)\) are attached to a chosen pure presentation
\(K_a=\Q(\theta)\).  The actual support is coarser: different normalized strata
may define the same rational leaf, and different pure presentations of the same
field should not be expected to produce identical auxiliary data.  An intrinsic
next step is therefore to reformulate the supporting leaf, the realized support
piece, and the finite local discriminant factor in a presentation-free language
depending only on the abstract field.  In odd degree this should recover
shape-completeness in intrinsic terms; in even degree it should clarify what
finite auxiliary data must be added to shape to recover the field.
\end{enumerate}

\bibliographystyle{alpha}
\bibliography{References}

@phdthesis{Terr1997,
  author = {Terr, David Charles},
  title = {The Distribution of Shapes of Cubic Orders},
  school = {University of California, Berkeley},
  year = {1997},
  type = {PhD thesis}
}

@article{BhargavaHarron2016,
  author = {Bhargava, Manjul and Harron, Piper},
  title = {The equidistribution of lattice shapes of rings of integers in cubic, quartic, and quintic number fields},
  journal = {Compositio Mathematica},
  volume = {152},
  number = {6},
  pages = {1111--1120},
  year = {2016},
  doi = {10.1112/S0010437X16007260}
}

@article{Hough2019,
  author = {Hough, Robert},
  title = {The shape of cubic fields},
  journal = {Research in the Mathematical Sciences},
  volume = {6},
  year = {2019},
  article-number = {23},
  doi = {10.1007/s40687-019-0185-1}
}

@article{Harron2017PureCubic,
  author = {Harron, Robert},
  title = {The shapes of pure cubic fields},
  journal = {Proceedings of the American Mathematical Society},
  volume = {145},
  number = {2},
  pages = {509--524},
  year = {2017},
  doi = {10.1090/proc/13309}
}

@article{Holmes2025PurePrime,
  author = {Holmes, Erik},
  title = {On the shapes of pure prime-degree number fields},
  journal = {Journal de th\'eorie des nombres de Bordeaux},
  volume = {37},
  number = {1},
  pages = {1--48},
  year = {2025},
  doi = {10.5802/jtnb.1311}
}

@article{PiperHHarron2020,
  author = {{Piper H} and Harron, Robert},
  title = {The shapes of {G}alois quartic fields},
  journal = {Transactions of the American Mathematical Society},
  volume = {373},
  number = {10},
  pages = {7109--7152},
  year = {2020},
  doi = {10.1090/tran/8137}
}

@misc{DasKalaMukhopadhyayRay2025,
  author = {Das, Sudipa and Kala, Sushant and Mukhopadhyay, Arunabha and Ray, Anwesh},
  title = {On the distribution of shapes of pure quartic number fields},
  year = {2025},
  eprint = {2506.23766},
  archivePrefix = {arXiv},
  primaryClass = {math.NT}
}

@misc{JakharKalwaniyaRayRoy2026,
  author = {Jakhar, Anuj and Kalwaniya, Ravi and Ray, Anwesh and Roy, Bidisha},
  title = {On the distribution of shapes of sextic pure number fields},
  year = {2026},
  eprint = {2601.09411},
  archivePrefix = {arXiv},
  primaryClass = {math.NT}
}

@article{JakharKhandujaSangwan2021,
  author = {Jakhar, Anuj and Khanduja, Sudesh K. and Sangwan, Neeraj},
  title = {On integral basis of pure number fields},
  journal = {Mathematika},
  volume = {67},
  number = {1},
  pages = {187--195},
  year = {2021},
  doi = {10.1112/mtk.12067}
}

@misc{NguyenDang2025MinPeriod,
  author = {Nguyen-Dang, Khai-Hoan},
  title = {The minimal periodicity for integral bases of pure number fields},
  year = {2025},
  eprint = {2509.09457},
  archivePrefix = {arXiv},
  primaryClass = {math.NT},
  note = {Accepted at Research in Number Theory}
}

@misc{NguyenDangHung2025Alpha,
  author = {Nguyen-Dang, Khai-Hoan and Hung, Nguyen Thai},
  title = {$\alpha$-monogeneity of pure number fields: criterion and density},
  year = {2025},
  eprint = {2510.20232},
  archivePrefix = {arXiv},
  primaryClass = {math.NT}
}

@article{MantillaSolerMonsurro2016,
  author = {Mantilla-Soler, Guillermo and Monsurr\`o, Marina},
  title = {The shape of {$\mathbb{Z}/\ell\mathbb{Z}$}-number fields},
  journal = {The Ramanujan Journal},
  volume = {39},
  number = {3},
  pages = {451--463},
  year = {2016},
  doi = {10.1007/s11139-015-9744-2}
}

@article{BolanosMantillaSoler2021,
  author = {Bola\~nos, Wilmar and Mantilla-Soler, Guillermo},
  title = {The trace form over cyclic number fields},
  journal = {Canadian Journal of Mathematics},
  volume = {73},
  number = {4},
  pages = {947--969},
  year = {2021},
  doi = {10.4153/S0008414X20000255}
}

@article{BolanosMantillaSoler2023,
  author = {Bola\~nos, Wilmar and Mantilla-Soler, Guillermo},
  title = {The shape of cyclic number fields},
  journal = {Canadian Mathematical Bulletin},
  volume = {66},
  number = {2},
  pages = {599--609},
  year = {2023},
  doi = {10.4153/S0008439522000546}
}

@article{GayVelez1981,
  author  = {Gay, David Andrew and Velez, William Yslas},
  title   = {The torsion group of a radical extension},
  journal = {Pacific Journal of Mathematics},
  volume  = {92},
  number  = {2},
  pages   = {317--331},
  year    = {1981}
}

\end{document}